\documentclass[12pt]{amsart}
\usepackage{amsmath,amsfonts,amsthm,amssymb,epsfig}
\usepackage[T1]{fontenc} 
\usepackage{esvect}
\usepackage{hyperref}
\usepackage{verbatim}
\usepackage{latexsym}
\usepackage{mathrsfs}
\usepackage{bbm}
\input colordvi
\usepackage{bm}
\usepackage[]{graphicx}
\usepackage{amssymb,amsthm,amsmath,amsfonts,epsfig}
\usepackage{epsf,pstricks,multicol,pst-grad,color,pst-node,pst-text,pst-3d,pst-tree}
\newcommand{\D}{\displaystyle}

\newcommand{\rev}[1]{#1}
\newcommand{\deltainv}{\rev{(-\Delta)^{-1}}}
\newcommand{\deltainvh}{\rev{(-\Delta_h)^{-1}}}

\newcommand{\revdim}[1]{#1}
\definecolor{darkviolet}{rgb}{0.58, 0.0, 0.83}
\newcommand{\revp}[1]{#1}
\newcommand{\mfrakj}{\ell}
\newcommand{\ellch}{\mathfrak{l}}

\newcommand{\XCH}{X_{\tt CH}}

\newcommand{\eps}{\varepsilon}
\newcommand{\normal}{{n}}
\newcommand{\hsnormal}{{\revised{n_\Gamma}}} 
\newcommand{\rnvec}[1]{{\revised{#1}}} 

\newcommand{\revised}[1]{#1}

\newcommand{\xx}{x} 

\newcommand{\cPsi}{c_F}
\newcommand{\Vh}{[\mathbb{V}_h(I)]^2}
\newcommand{\Wh}{\mathbb{V}_h(I)}
\newcommand{\RZ}{{\mathbb R} \slash {\mathbb Z}}

\newtheorem{thm}{Theorem}[section]
\newtheorem{definition}{Definition}[section]
\newtheorem{scheme}{Scheme}[section]
\newtheorem{cor}[thm]{Corollary}
\newtheorem{lem}[thm]{Lemma}
\newtheorem{rem}[thm]{Remark}

\numberwithin{equation}{section}
\pagestyle{plain}
\setlength{\textwidth}{16.5truecm}
\setlength{\oddsidemargin}{0.00truecm}
\setlength{\evensidemargin}{0.00truecm}
\begin{document}
{\black{
\title[]
{Numerical approximation of the Stochastic Cahn-Hilliard Equation near the Sharp Interface Limit}
%
%
%
%
%
%

\author{Dimitra Antonopoulou}
\address{Department of Mathematics, University of Chester, Thornton Science Park, CH2 4NU, UK.}
\address{Institute of Applied and Computational Mathematics, FO.R.T.H., GR-711 10 Heraklion, Greece.}
\email{d.antonopoulou@chester.ac.uk}
\author{\v{L}ubom\'{i}r Ba\v{n}as}
\address{Department of Mathematics, Bielefeld University, 33501 Bielefeld, Germany}
\email{banas@math.uni-bielefeld.de}
\author{Robert N\"urnberg}
\address{Department of Mathematics, Imperial College London, London, SW7 2AZ, UK}
\email{robert.nurnberg@imperial.ac.uk}
\author{Andreas Prohl}
\address{Mathematisches Institut der Universit\"at T\"ubingen,
                Auf der Morgenstelle 10,
                D-72076 T\"ubingen, Germany.}
\email{prohl@na.uni-tuebingen.de}

%
%

\begin{abstract}
We consider the stochastic Cahn-Hilliard equation
with additive noise term $\eps^\gamma g\, \dot{W}$ ($\gamma >0$)
that scales with the interfacial width parameter $\eps$.
%
We verify strong error estimates for a gradient \revised{flow} structure-inheriting time-implicit discretization, where $\eps^{-1}$
only enters {\em polynomially}; the proof is based on
{higher-moment estimates for}  iterates, 
and a (discrete) spectral estimate for its deterministic counterpart.
For $\gamma$ sufficiently large, convergence {in probability} of iterates
towards the deterministic Hele-Shaw/Mullins-Sekerka problem
in the sharp-interface limit $\eps \rightarrow 0$ is shown.
These convergence results are partly generalized to a fully discrete finite element based discretization. 

We complement the theoretical results by computational studies to provide practical evidence
concerning the effect of noise (depending on its 'strength' $\gamma$) on the geometric
evolution in the sharp-interface limit.
For this purpose we compare the simulations with those
from a fully discrete finite element numerical scheme for the (stochastic) Mullins-Sekerka problem.
The computational results indicate that the limit for $\gamma\geq 1$ is the deterministic problem,
and for $\gamma=0$ we obtain agreement with a (new)
stochastic version of the Mullins-Sekerka problem.

\end{abstract}

%


%

%
%
%
%
%
%
%
%
%

\maketitle

\section{Introduction}

We consider the stochastic Cahn-Hilliard equation with additive noise
\begin{subequations}\label{p1}
\begin{align}\label{p1_1}
&\mathrm{d} u =\Delta\Big{(}-\varepsilon\Delta u+\frac{1}{\varepsilon}f(u)\Big{)}\mathrm{d} t
+\varepsilon^{\gamma}g\, \mathrm{d} W\;\;\;\;&&\mbox{in}\;\; {\mathcal D}_T := (0,T) \times \mathcal{D}\,\\
&\partial_\normal u = \partial_\normal \Delta u =0\;\;\;\;&&\mbox{on}\;\; (0,T) \times \partial \mathcal{D}\
,\\
&u(0, \cdot)=u_0^\eps\;\;\;\;&&\mbox{on}\;\;\mathcal{D}\, .
\end{align}
\end{subequations}
We fix $T>0$,
$\gamma>0$, and $\varepsilon>0$ is a (small) interfacial width parameter.
For simplicity, we assume $\mathcal{D}\subset \mathbb{R}^{2}$ to
be a convex, bounded polygonal domain, with
$\normal \in {\mathbb S}^2$ the outer unit normal along $\partial \mathcal{D}$, and
 $W \equiv \{ W_t;\, 0 \leq t \leq T\}$ \revised{to} be an ${\mathbb R}$-valued Wiener process on a 
filtered probability space $(\Omega, {\mathcal F}, \{ {\mathcal F}_t\}_t, {\mathbb P})$.
The function $g \in C^{\infty}({\mathcal D})$ \revised{is} such that $\int_{{\mathcal D}} g \, {\rm d}x = 0$ to enable conservation of mass in \eqref{p1},
 and $\revdim{\partial_\normal g =0}$ on $\partial \mathcal{D}$.
Furthermore, we assume {$u^\varepsilon_0 \in {\mathbb H}^1$}, {and impose $\int_{\mathcal D} u^{\varepsilon}_0\, {\rm d}x = 0$, for simplicity; generalization
for arbitrary mean values is straightforward}.
%

The nonlinear drift part $f$ in  (\ref{p1}) is the derivative of the
double-well potential $F(u):=\frac{1}{4}(u^2-1)^2$, i.e., 
\revised{$f(u)=F'(u)=u^3-u$}.
Associated to the system (\ref{p1}) is the Ginzburg-Landau free energy 
\begin{equation*}\label{glener}
\mathcal{E}(u) = \int_{{\mathcal D}} \Big(\frac{\eps}{2} |\nabla u|^2 + \frac{1}{\eps}F(u)\Big)\, {\rm d}\xx\, .
\end{equation*}
The particular case $g \equiv 0$ in (\ref{p1}) leads to the deterministic Cahn-Hilliard equation
which can be interpreted as the $\mathbb{H}^{-1}$-gradient flow of the Ginzburg-Landau free energy.
It is convenient to reformulate \eqref{p1} as
\begin{subequations}\label{p3}
\begin{align}
\label{p3_1}
\mathrm{d} u & =\Delta w \mathrm{d}t + \varepsilon^{\gamma}g\, \mathrm{d}W   & \, &\mbox{in}\;\;\mathcal{D}_T,\\
\label{p3_2}
w &=-\varepsilon\Delta u+\frac{1}{\varepsilon}f(u) &\, &\mbox{in}\;\; \mathcal{D}_T\, ,\\
\label{p3_3}
\partial_\normal u &=\partial_\normal w=0 &\, &\mbox{on}\;\;(0,T) \times \partial \mathcal{D}\, ,\\
\label{p3_4}
u(0,\cdot) & = u_0^\eps &&\mbox{on}\;\; \mathcal{D}\, ,
\end{align}
\end{subequations}
where $w$ denotes the chemical potential. 

The Cahn-Hilliard equation has been derived as a phenomenological model for phase separation of binary alloys.
The stochastic version of the Cahn-Hilliard equation, also known as the Cahn-Hilliard-Cook equation, has been proposed in \cite{cook, hohenberg, langer}:
here, the noise term is used to model effects of external fields, impurities in
the alloy, or may describe thermal fluctuations or external mass
supply.
We also mention \cite{gam05},
where 
 computational studies for \eqref{p1} show a better agreement with experimental data in the presence of noise.
For a theoretical analysis of various versions of the stochastic Cahn-Hilliard equation  we refer to \cite{Weber,Weber1,Debu2,elezovic}. Next to its relevancy in materials sciences, \eqref{p1} is used as an approximation to the Mullins-Sekerka/Hele-Shaw problem; 
%
%
by the classical result \cite{abc},
the solution of the deterministic Cahn-Hilliard equation is known to converge to
the solution of the Mullins-Sekerka/Hele-Shaw problem in the sharp interface limit $\eps\downarrow 0$. 
A partial convergence result for the stochastic Cahn-Hilliard equation (\ref{p1})
has been obtained recently in \cite{abk18} for a sufficiently large exponent $\gamma$. 
{We extend this work to eventually validate {\it uniform} convergence of {iterates of the time discretization Scheme \ref{scheme_time} to the sharp-interface limit 
of (\ref{p1}) for vanishing 
numerical (time-step $k$), and regularization (width $\varepsilon$) parameters:
hence, the zero level set of the solution to the geometric interface of the Mullins-Sekerka problem is accurately resolved via Scheme \ref{scheme_time} in the asymptotic limit.}
\\

It is well-known that
an energy-preserving discretization, along with
a proper balancing of numerical parameters and the interface width parameter $\varepsilon$\revised{,}
is required for accurate simulation of the deterministic Cahn-Hilliard equation; see e.g.~\cite{CHFP}:  analytically, this balancing of scales allows to circumvent  a straight-forward
application of Gronwall's lemma in the error analysis, which would otherwise 
cause a factor in
a corresponding error estimate that grows exponentially in $\varepsilon^{-1}$.
 {The present paper pursues a corresponding goal for a  structure-preserving
 discretization of the stochastic Cahn-Hilliard equation \eqref{p1}; we identify
 proper discretization scales which allow a resolution of interface-driven evolutions,
 and thus avoid a Gronwall-type argument in the corresponding strong error analysis.
 {This allows for practically relevant scaling scenarios of involved numerical parameters to accurately approximate solutions of \eqref{p1} even in
 the asymptotic regime where $\varepsilon \ll 1$.}
\\

The proof of a strong error estimate for a space-time discretization of (\ref{p1})  which {causes  only {\em polynomial dependence on $\varepsilon^{-1}$} in involved stability constants} uses the following ideas:
\begin{itemize}
\item[(a)] We use the time-implicit Scheme \ref{scheme_time}, whose iterates inherit 
the {basic energy bound (see Lemma \ref{lem_xch}, i))} from \eqref{p1}.
         We benefit from a {weak
         monotonicity property of the drift operator} {in the proof of Lemma \ref{thma2}} to effectively handle the cubic nonlinearity in the drift part.
\item[(b)] For $\gamma >0$ sufficiently large, we view \eqref{p1} as a stochastic perturbation of the deterministic Cahn-Hilliard equation (i.e., \eqref{p1} with \revised{$g \equiv 0$}),
and proceed analogically also in the discrete setting. We then benefit in the proof of Lemma \ref{thma2} from (the discrete version of) the spectral estimate (\ref{genspec}) from \cite{Ch:94,AlFu:93} 
for the deterministic Cahn-Hilliard equation (see Lemma~\ref{lem_xch}, v)).
\item[(c)] For the deterministic setting \cite{CHFP}, an induction argument is used on the discrete level, 
{which addresses the cubic error term (scaled by $\varepsilon^{-1}$) in Lemma \ref{thma2}.}
%
%
This argument may not be generalized in a straightforward way
to the current stochastic setting where the discrete solution {is a 
sequence of random variables allowing for (relatively) large temporal variations}.
For this reason we consider the propagation of errors on
two complementary subsets of $\Omega$: on the large subset
$\Omega_2$ we verify the error estimate (Lemma \ref{lem_patherr}), while
we benefit from the higher-moment estimates for iterates of Scheme \ref{scheme_time} from (a) to
derive a corresponding estimate on the small set $\Omega \setminus \Omega_2$ (see Corollary~\ref{cor_err_xxa}). 
A combination of both results then establishes our first main result:
{a {strong} error estimate for the numerical approximation of the stochastic Cahn-Hilliard equation}
(see Theorem \ref{thmfin}), {avoiding Gronwall's lemma}.
\item[(d)] 
Building on the results from (c), and
using an $\mathbb{L}^\infty$-bound for the solution of Scheme \ref{scheme_time}
(Lemma~\ref{lem_linftybnd}), along with error estimates in stronger norms (Lemma~\ref{lem_err_l2l4}),
we show {uniform} convergence {of iterates} 
on large subsets of $\Omega$ (Theorem~\ref{thm_err_linfty}).
This intermediate result then implies the second main result of the paper:
the convergence in probability of iterates of Scheme \ref{scheme_time} to the sharp interface limit
in Theorem~\ref{cor_sharp} {for sufficiently large $\gamma$}.
In particular, we show {that \revised{the} numerical solution of (\ref{p1})} uniformly converges in probability to
$1$, $-1$ in the interior and exterior of the geometric interface of the deterministic Mullins-Sekerka problem (\ref{eq:MS}), respectively.
As a consequence we obtain uniform convergence of the zero level set of the numerical solution to the geometric interface of the Mullins-Sekerka problem in probability; cf.~Corollary~\ref{cor_gamma}.
\end{itemize}
}
The error analysis below in particular identifies  proper balancing strategies
of numerical parameters with the interface width that allow to approximate the limiting sharp interface model 
for realistic problem setups, {and motivates the use of space-time adaptive meshes for numerical simulations; see e.g.~\cite{ps19}}.  In Section \ref{sec_numer},
we present computational studies which evidence asymptotic properties of the solution for different scalings of the noise term.
{
Our studies suggest the deterministic Mullins-Sekerka problem as sharp-interface limit
already for $\gamma \geq 1$; we \revised{observe} this in simulations
for spatially colored, as well 
as for the space-time white noise. In contrast, corresponding simulations for $\gamma = 0$
indicate that the sharp-interface limit is a stochastic version of the Mullins-Sekerka problem; {see Section \ref{hsms-1}}.  \\
}

{To sum up, the convergence analysis presented in this paper 
is a combination of a perturbation and discretization error analysis.
The latter depends on stability properties
of the proposed numerical scheme: higher-moment energy estimates for the
Scheme \ref{scheme_time},   
a discrete spectral estimate for the related deterministic
variant, {and a local error analysis on the sample set $\Omega$} are crucial ingredients of our approach.
The techniques developed in this paper constitute a general framework which can be used to treat
different and/or more general phase-field models including the stochastic Allen-Cahn equation, and {apply to settings which}
involve multiplicative noise, driving trace-class Hilbert-space-valued Wiener processes, and bounded polyhedral  domains ${\mathcal D} \subset {\mathbb R}^3$, as well.
%
%
}

The paper is organized as follows. 
Section~\ref{sec_cont_ch} is dedicated to the analysis of the continuous problem.
The time discretization Scheme \ref{scheme_time} is proposed in Section~\ref{asec_time} and rates of
convergence are shown, while
{Section~\ref{sec_space_time} extends this convergence analysis to
its finite-element discretization.}
The convergence of the numerical discretization to the sharp-interface limit
is studied in Section~\ref{sec_sharp}.
Section~\ref{sec_numer} contains the details of the implementation of the numerical schemes for the stochastic
Cahn-Hilliard and the stochastic Mullins-Sekerka problem, respectively, as well as
computational experiments which {complement} the analytical results.

\section{The stochastic Cahn-Hilliard equation}\label{sec_cont_ch}
\subsection{Notation}\label{sec_cont_ch_1}
For $1\leq p \leq \infty$, we denote by $\bigl( \mathbb{L}^p, \Vert \cdot \Vert_{\mathbb{L}^p}\bigr)$ the standard spaces of $p$-th order  integrable functions on $\mathcal{D}$.
By $(\cdot,\cdot)$ we denote the $\mathbb{L}^2$-inner product,
{and let $\Vert\cdot\Vert = \Vert \cdot \Vert_{\mathbb{L}^2}$.
For} $k\in \mathbb{N}$
we write $\bigl(\mathbb{H}^k, \Vert \cdot \Vert_{\mathbb{H}^k}\bigr)$ for 
usual Sobolev spaces on $\mathcal{D}$, and
$\mathbb{H}^{-1} = (\mathbb{H}^1)^\prime$. 
We define $\mathbb{L}^2_0 := \{ \phi \in \mathbb{L}^2; \,\, \int_\mathcal{D} \phi \,\mathrm{d}\xx = 0\}$,
and for $v \in \mathbb{L}^2$
we denote its zero mean counterpart as $\overline{v} \in \mathbb{L}^2_0$, i.e.,  $\overline{v} := v - \frac{1}{|\mathcal{D}|}\int_{\mathcal{D}}v\,\mathrm{d}\xx$.
We frequently use the isomorphism $\deltainv: \mathbb{L}^2_0 \rightarrow {\mathbb{H}^2} \cap \mathbb{L}^2_0$, where ${w} = \deltainv \overline{v}$ is
the unique solution of
\begin{equation*}
-\Delta {w}  =  \overline{v} \quad \mathrm{in}\,\, \mathcal{D}, \qquad
\displaystyle  \partial_\normal {w}  =  0 \quad \mathrm{on}\,\, \partial \mathcal{D}.
\end{equation*}
In particular, $(\nabla \deltainv \overline{v}, \nabla \varphi) = \rev{(\overline{v}, \varphi)}$ for all $\varphi \in \mathbb{H}^1$, $\overline{v}\in \mathbb{L}^2_0$.
\rev{Below, we denote $\Delta^{-1/2} \overline{v}:= \nabla \deltainv \overline{v}$
and note that norms $\|\overline{v} \|_{\mathbb{H}^{-1}}$ and $ \Vert\Delta^{-1/2}\overline{v} \Vert$ are equivalent for all $\overline{v}\in \mathbb{L}^2_0$.}
Throughout the paper, $C$  denotes a generic positive constant that may depend on $\mathcal{D}$, $T$, but is independent of $\eps$.

\subsection{The Problem}\label{sec_cont_ch_2}
We recall the definition of a strong variational solution of the stochastic Cahn-Hilliard equation (\ref{p1}); its existence, uniqueness, and regularity properties have been obtained in 
\rev{\cite[Thm. 8.2]{elezovic}, \cite[Prop. 2.2]{Debu2}.}
\begin{definition}\label{def_varsol}
Let $u_0^\eps \in L^2(\Omega, \mathcal{F}_0, \mathbb{P}; \mathbb{H}^1) \cap L^4(\Omega, \mathcal{F}_0, \mathbb{P}; \mathbb{L}^4)$
\rev{and denote $\underline{\mathbb{H}}^2 = \{\varphi \in \mathbb{H}^2,\,\, \partial_\normal \varphi = 0\,\,\mathrm{on}\,\, \partial\mathcal{D} \}$}. 
Then, the process 
$$u\in L^2\bigl(\Omega, \{ \mathcal{F}_t\}_t, \mathbb{P}; C([0,T]; \mathbb{H}^1)\cap \rev{L^2(0,T; \underline{\mathbb{H}}^2)}\bigr) \cap L^4\bigl(\Omega, \{ \mathcal{F}_t\}_t, \mathbb{P}; C([0,T]; \mathbb{L}^4)\bigr)$$ 
is called a strong solution of (\ref{p1}) 
if it satisfies $\mathbb{P}$-a.s.~and for all $0 \leq t \leq T$
$$
\bigl(u(t), \varphi \bigr) = (u_0^\eps, \varphi) + \rev{\int_0^t \Big(-\eps \Delta u + \frac{1}{\eps} f(u), \Delta \varphi\Big)\mathrm{d}s}
+ \eps^\gamma \int_0^t (\varphi,g) \, \mathrm{d}W(s) \quad \forall \varphi \in \underline{\mathbb{H}}^2\, .
$$
\end{definition}
%
The following lemma establishes existence and bounds for the strong solution $u$ of (\ref{p1}) and 
for the chemical potential $w$ from (\ref{p3_2});
cf.~\cite[Section~2.3]{Debu2} for a proof of i), while ii)
follows similarly as part i) by the It\^o formula and the Burkholder-Davis-Gundy inequality.
\begin{lem}\label{lem_ener_cont}
{Let $T>0$. There exists a unique}
strong  solution $u$ of (\ref{p1}), and there hold
\begin{itemize}
\item[i)] \quad
$
\D \mathbb{E}\big[ \mathcal{E}\bigl(u(t)\bigr)\big]
+ \mathbb{E}\Big[ \int_0^t\|\nabla w(s)\|^2\, \mathrm{d}s\Big] 
\leq 
C \big( \mathcal{E}(u_0^\eps) + 1\big)
\qquad \forall \, t\in [0,T]\, ,
$
\item[ii)] \quad
For any $p\in \mathbb{N}$ there exists $C\equiv C(p)>0$ such that
\quad 
$$\D \mathbb{E}\big[ \sup_{t\in [0,T]} \mathcal{E}\bigl(u(t) \bigr)^p\big] 
\leq  C\bigl(  \mathcal{E}(u_0^\eps)^p  + 1\bigr)\, .
$$
\end{itemize}
\end{lem}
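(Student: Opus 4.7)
The plan is to apply It\^o's formula to the Ginzburg-Landau functional $\mathcal{E}(u)$ (resp.\ to $\mathcal{E}(u)^p$ for part ii)) along a strong solution of \eqref{p1}, exploiting that the first variation of $\mathcal{E}$ is exactly the chemical potential $w=-\eps\Delta u+\frac{1}{\eps}f(u)$ appearing in \eqref{p3}. Existence and uniqueness of the strong solution are already available from \cite{elezovic,Debu2}; only the a priori bounds need to be established, so I will work on a Galerkin/spectral approximation and pass to the limit, or equivalently, just write the computation formally assuming the regularity stated in Definition~\ref{def_varsol}.

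For part i), I would test the first equation of \eqref{p3} with $w$ (which corresponds to applying It\^o's formula to $\mathcal{E}(u)$), using that $\mathcal{E}'(u)=w$ and that the Hessian $\mathcal{E}''(u)$ acts as $\eps\|\nabla\cdot\|^2+\frac{1}{\eps}(f'(u)\cdot,\cdot)$. The drift contribution becomes $(w,\Delta w)\,\mathrm{d}t = -\|\nabla w\|^2\,\mathrm{d}t$ after integration by parts using the Neumann boundary condition on $w$. The It\^o correction is
\begin{equation*}
\tfrac{1}{2}\eps^{2\gamma}\Big(\eps\|\nabla g\|^2+\tfrac{1}{\eps}\int_{\mathcal{D}} f'(u)g^2\,\mathrm{d}\xx\Big)\,\mathrm{d}t.
\end{equation*}
Since $f'(u)=3u^2-1$ and $u^2\leq 2F(u)+C$, the correction is bounded by $C(\mathcal{E}(u)+1)$, with a constant depending only on $\|g\|_{W^{1,\infty}}$. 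Taking expectation kills the stochastic integral $\eps^\gamma\int_0^t(w,g)\,\mathrm{d}W$, and Gronwall's lemma applied to $t\mapsto \mathbb{E}[\mathcal{E}(u(t))]$ yields i).

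For part ii), I would apply It\^o to $\Phi(u):=\mathcal{E}(u)^p$. This produces (besides $p$ times the terms from part i), now weighted by $\mathcal{E}(u)^{p-1}$) an additional cross-variation term $\tfrac{1}{2}p(p-1)\eps^{2\gamma}\mathcal{E}(u)^{p-2}(w,g)^2\,\mathrm{d}t$, plus the stochastic integral $p\eps^\gamma\int_0^t\mathcal{E}(u)^{p-1}(w,g)\,\mathrm{d}W$. Using $\int_{\mathcal{D}}g\,\mathrm{d}\xx=0$, I can write $(w,g)=(w-\overline{w},g)$ and apply Poincar\'e to obtain $|(w,g)|\leq C\|g\|\,\|\nabla w\|$. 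Taking the supremum in time, and then expectation, the Burkholder-Davis-Gundy inequality bounds the martingale by
\begin{equation*}
C\,\mathbb{E}\Big[\Big(\int_0^T \mathcal{E}(u)^{2(p-1)}\|\nabla w\|^2\,\mathrm{d}s\Big)^{1/2}\Big]
\leq C\,\mathbb{E}\Big[\sup_{s\leq T}\mathcal{E}(u(s))^{p-1/2}\Big(\int_0^T\|\nabla w\|^2\,\mathrm{d}s\Big)^{1/2}\Big],
\end{equation*}
which after Young's inequality is absorbed into $\tfrac{1}{2}\mathbb{E}[\sup\mathcal{E}(u)^p]$ plus a term controlled by $\mathbb{E}[\mathcal{E}(u)^{p-1}\int_0^T\|\nabla w\|^2\,\mathrm{d}s]$; the latter is handled inductively in $p$ using part i) and part ii) at order $p-1$. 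The cross-variation and drift correction terms contribute $C\int_0^t\mathcal{E}(u)^p\,\mathrm{d}s$ (plus lower-order pieces), and a final Gronwall step in $p$-dependent form closes the argument.

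The main obstacle I anticipate is controlling the It\^o correction uniformly in $\eps$: the factor $\eps^{-1}$ from differentiating $F$ is only compensated provided one uses the quadratic-in-$u$ bound on $f'(u)$ together with the $F(u)$ control inherent in $\mathcal{E}(u)$, so care is needed to avoid spurious $\eps^{-1}$ dependence in the constants. For part ii), the BDG estimate must be arranged so that the $\sup$ appears only to a power strictly less than $p$ before Young's inequality, which is the step that makes the inductive argument in $p$ necessary rather than a direct closure.
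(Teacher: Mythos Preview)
Your proposal is correct and follows exactly the approach the paper indicates: the paper merely cites \cite[Section~2.3]{Debu2} for part i) and states that part ii) ``follows similarly as part i) by the It\^o formula and the Burkholder-Davis-Gundy inequality,'' which is precisely the It\^o-on-$\mathcal{E}(u)^p$ plus BDG route you outline. Your use of the zero-mean property of $g$ to replace $(w,g)$ by $(\overline{w},g)$ and then Poincar\'e is also the device the paper employs in the discrete analogue (Lemma~\ref{lem_energy}, part~ii)), and your flagged concern about the $\eps^{-1}$ in the It\^o correction is handled exactly as you suggest, via $u^2\leq C(F(u)+1)$ and the $\eps^{2\gamma}$ prefactor.
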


{{
\subsection{Spectral estimate}\label{spectral estimate}


We denote by $u_{\tt CH}: \mathcal{D}_T \rightarrow {\mathbb R}$ the solution of the deterministic Cahn-Hilliard equation,
i.e., \eqref{p1} with $g \equiv 0$.
{Let $\eps_0 \ll 1$; throughout the paper we assume that for every $\eps \in (0, \eps_0)$, there exists an arbitrarily close  approximation $u_{\tt A}\in C^2(\overline{\mathcal{D}}_T)$ of 
$u_{\tt CH}$ which satisfies the spectral estimate (cf.~\cite[relation (2.3)]{abc})
\begin{equation}\label{genspec}
\inf_{0\leq t\leq T}\inf_{\psi\in
\mathbb{H}^{1}, \; w=\deltainv\psi} \frac{\eps\|\nabla\psi\|^2+\frac{1}{\eps}\bigl(f'(u_{\tt
A})\psi,\psi \bigr)}{\|\nabla w\|^2}\geq -C_0\, ,
\end{equation}
where the constant $C_0 >0$ does not depend on $\varepsilon >0$; cf.~\cite{abc,AlFu:93,Ch:94}.}

\subsection{Error bound between $u$ of (\ref{p1}) and $u_{\tt CH}$ of (\ref{p1}) with $g \equiv 0$.}\label{dimitra-bound}
%
%

In \cite{abk18} the authors study the convergence of the solution of the stochastic Cahn-Hilliard equation (\ref{p1})
to the deterministic sharp-interface limit. In particular, they show the convergence in probability
of the solution $u$ of (\ref{p1}) to the approximation $u_{\tt A}$ of $u_{\tt CH}$  for sufficiently large $\gamma>0$.
Apart from the spectral estimate (\ref{genspec}), a central ingredient of their analysis 
is the use of a stopping time argument to control the drift nonlinearity. The stopping time which, in our setting, is defined as
$$
T_\eps:=\inf\Big\{ t\in[0,T]:\,\,\frac{1}{\eps}\int_0^t\|u(s)-u_{\tt CH}(s)\|_{\mathbb{L}^3}^3\, {\rm d}s >\eps^{\sigma_0}\Big\} $$
for some constant $\sigma_0>0$, enables the derivation of the estimates in Lemma~\ref{lem_rest}  below  up to the stopping time $T_\eps$ on a large sample subset 
$$
\Omega_1 := \Big\{ \omega\in \Omega:\,\, \eps^\gamma \revdim{\sup_{t\in [0,T_\eps]}}\Big|\int_0^{t}\big(
\revised{u(s)-u_{\tt CH}(s)}, \deltainv g\, \mathrm{d}W(s)\big)\Big| \leq \eps^{\kappa_0}\Big\} 
$$
that satisfies $\mathbb{P}[\Omega_1] \rightarrow  1$ for 
\revised{$\eps \downarrow 0$, for some constant $\kappa_0$}.
On specifying the condition {\bf (A)}  below  it can be shown that $T_\eps\equiv T$, which yields Lemma~\ref{lem_rest}.
In this section we extend the work \cite{abk18} by showing a strong error estimate for $u-u_{\tt CH}$ in Lemma~\ref{RL2est}.

In Section \ref{asec_time} we perform an analogous analysis on the discrete level 
by using a stopping index $J_\eps$, and a set $\Omega_2$ which are discrete counterparts of $T_\eps$ \revised{and} $\Omega_1$, respectively.
Both approaches require a lower bound for the noise strength $\gamma$ to ensure, in particular, positive probability of the sets $\Omega_1$ \revised{and} $\Omega_2$, respectively.

\medskip

For the analysis in this section we require the following assumptions to hold.
\begin{itemize}
\item[{\bf (A)}] Let  $\mathcal{E}(u^\varepsilon_0) \leq C$. Assume that {the triplet $(\sigma_0, \kappa_0, \gamma) \in \bigl[{\mathbb R}^+\bigr]^3$ satisfies} 
$$
{\sigma_0 > 12\,, \qquad \sigma_0 > \kappa_0 > \frac{2}{3}\sigma_0 + 4\,, \qquad \gamma >  \max\big\{ \frac{23}{3}, \frac{\kappa_0}{2}\big\}}\, .
$$
\end{itemize}
}
{Assumption {\bf (A)} ensures positivity of all exponents in the estimates in the lemmas of this section.}
The following lemma relies on the spectral estimate (\ref{genspec}) and is a consequence of \cite[Theorem 3.10]{abk18} for $p=3$, $d=2$, {where a slightly different notational setup is used.}
\begin{lem}\label{lem_rest}
{Suppose}
${\bf (A)}$. {There exists}
$\eps_0 \equiv \eps_0(\sigma_0, \kappa_0) >0$ such that
for any $\eps \leq \eps_0$ and {sufficiently large $\ellch >0$}
\begin{eqnarray*}\label{rest0}
{\rm i)} &&{\mathbb P}\bigl[\|u -u_{\tt A}\|^2_{L^\infty(0,T; \mathbb{H}^{-1})}  \leq {C}  {\eps^{\kappa_0}}\bigr] \geq 1- 
C \varepsilon^{(\gamma + \frac{\sigma_0+1}{3} - \kappa_{0})\ellch}\,,\\
\label{rest1}
{\rm ii)}&&{\mathbb P}\bigl[\eps \|\nabla [u -u_{\tt A}]\|^2_{L^2(0,T; \mathbb{L}^{2})}  \leq {C} {\eps^{\frac{2\sigma_0}{3}}} \bigr] \geq 1- 
C \varepsilon^{(\gamma + \frac{\sigma_0+1}{3} - \kappa_{0})\ellch}\, ,
\end{eqnarray*}
where $\ellch$ 
and $C \equiv C(\ellch)>0$ are independent of  $\gamma$, $\sigma_0$, $\kappa_0$ and $\eps$.
\end{lem}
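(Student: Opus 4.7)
\medskip\noindent\textbf{Proof proposal.} The plan is to test the error equation for $e := u - u_{\tt A}$ against $\deltainv e$ (which is admissible since both $u$ and $u_{\tt A}$ share mean zero up to residuals), and to derive a pathwise It\^o identity for $\tfrac{1}{2}\|e(t)\|_{\mathbb{H}^{-1}}^2$. Subtracting the PDE satisfied by $u_{\tt A}$ from \eqref{p3} and using \eqref{p3_1}--\eqref{p3_2} gives
\begin{equation*}
\mathrm{d} e = \Delta\Big[-\eps\Delta e + \tfrac{1}{\eps}\bigl(f(u)-f(u_{\tt A})\bigr)\Big]\mathrm{d}t + \mathcal{R}_{\tt A}\,\mathrm{d}t + \eps^\gamma g\,\mathrm{d}W,
\end{equation*}
where $\mathcal{R}_{\tt A}$ is the consistency residual, which is small by the approximation property of $u_{\tt A}$. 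Applying It\^o's formula with the $\mathbb{H}^{-1}$-norm functional yields
\begin{equation*}
\tfrac{1}{2}\|e(t)\|_{\mathbb{H}^{-1}}^2 + \int_0^t\!\!\Big[\eps\|\nabla e\|^2 + \tfrac{1}{\eps}\bigl(f(u)-f(u_{\tt A}),e\bigr)\Big]\mathrm{d}s = \mathcal{S}(t) + \mathcal{M}(t),
\end{equation*}
with $\mathcal{S}(t)$ collecting the residual contributions and the It\^o correction $\tfrac{1}{2}\eps^{2\gamma}t\|\Delta^{-1/2}g\|^2$, and $\mathcal{M}(t):=\eps^\gamma\int_0^t(e,\deltainv g)\,\mathrm{d}W$ the martingale part.

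The next step is the algebraic expansion $f(u)-f(u_{\tt A}) = f'(u_{\tt A})e + 3u_{\tt A}\,e^2 + e^3$. Combining the quadratic and the gradient terms, the spectral estimate \eqref{genspec} applied to $\psi=e$ (which is mean-free up to a negligible correction) gives
\begin{equation*}
\eps\|\nabla e\|^2 + \tfrac{1}{\eps}\bigl(f'(u_{\tt A})e,e\bigr) \geq -C_0\|e\|_{\mathbb{H}^{-1}}^2,
\end{equation*}
so the dangerous quadratic term is absorbed, the quartic $\tfrac{1}{\eps}\|e\|_{\mathbb{L}^4}^4$ is coercive, and only the indefinite cubic term $\tfrac{3}{\eps}(u_{\tt A},e^3)$ has to be estimated. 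Using $\|u_{\tt A}\|_{\mathbb{L}^\infty} \leq C$ and H\"older, this contribution is bounded by $C\eps^{-1}\|e\|_{\mathbb{L}^3}^3$, which is exactly the quantity controlled by the stopping time $T_\eps$: for $t \leq T_\eps$ one has $\eps^{-1}\int_0^t\|u-u_{\tt CH}\|_{\mathbb{L}^3}^3\mathrm{d}s \leq \eps^{\sigma_0}$, and the difference $u_{\tt CH}-u_{\tt A}$ is negligible by the approximation hypothesis.

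Restricting to the event $\Omega_1$ and to times $t\leq T_\eps$, the martingale $\mathcal{M}(t)$ is controlled by $\eps^{\kappa_0}$ by definition of $\Omega_1$, and the deterministic residual $\mathcal{S}(t)$ is of order $\eps^{\kappa_0}$ under assumption \textbf{(A)}. A Gronwall argument then yields, pathwise on $\Omega_1\cap\{t\leq T_\eps\}$,
\begin{equation*}
\|e(t)\|_{\mathbb{H}^{-1}}^2 + \eps\int_0^t\|\nabla e\|^2\mathrm{d}s \;\leq\; C\bigl(\eps^{\kappa_0} + \eps^{\sigma_0} + \eps^{2\sigma_0/3}\bigr),
\end{equation*}
where the exponent $2\sigma_0/3$ arises from splitting the cubic $\eps^{-1}\|e\|_{\mathbb{L}^3}^3$ via Young's inequality to generate the $\eps\|\nabla e\|^2$ contribution that matches the statement ii). The main obstacle is the final bootstrap: one must verify that $T_\eps=T$ on a subset of $\Omega_1$ of probability at least $1-C\eps^{(\gamma+(\sigma_0+1)/3-\kappa_0)\ellch}$. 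This is achieved by a continuation argument, showing that the a priori bound $\|e\|_{\mathbb{H}^{-1}}^2 \leq C\eps^{\kappa_0}$ together with the higher-moment energy estimates from Lemma~\ref{lem_ener_cont} is strictly stronger than the defining inequality of $T_\eps$ under the balance \textbf{(A)}, so that $T_\eps$ cannot be strictly less than $T$; the gap in the exponents is precisely what determines the stated failure probability, and Chebyshev applied to the martingale $\mathcal{M}$ and to $\|u\|_{\mathbb{L}^3}$ via Lemma~\ref{lem_ener_cont} gives the algebraic rate $\eps^{(\gamma+(\sigma_0+1)/3-\kappa_0)\ellch}$ after raising to the $\ellch$-th moment. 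This yields both claims of the lemma.
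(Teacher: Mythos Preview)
The paper does not give its own proof of this lemma: it simply records that the result ``is a consequence of \cite[Theorem~3.10]{abk18} for $p=3$, $d=2$'' and notes, after a closer look at \cite[Lemma~4.3]{abk18}, that the moment exponent $\ellch$ may be taken arbitrarily large. So there is no proof in the paper to compare against line by line.

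Your sketch is a faithful reconstruction of the argument in \cite{abk18} that the paper is citing: the $\mathbb{H}^{-1}$-It\^o identity for $e=u-u_{\tt A}$, linearization $f(u)-f(u_{\tt A})=f'(u_{\tt A})e+3u_{\tt A}e^2+e^3$, absorption of the quadratic part via the spectral estimate \eqref{genspec}, control of the cubic remainder through the stopping time $T_\eps$, and a bootstrap showing $T_\eps=T$ on $\Omega_1$ under {\bf (A)} are exactly the ingredients used there. The probability bound $C\eps^{(\gamma+(\sigma_0+1)/3-\kappa_0)\ellch}$ indeed comes from Markov's inequality applied to the $\ellch$-th moment of the stochastic integral $\mathcal{M}$ (which is where the free parameter $\ellch$ enters), combined with the $L^3$-control from the stopping time; this matches \cite[Lemma~4.3]{abk18}. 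One minor clarification: the exponent $\tfrac{2\sigma_0}{3}$ in part~ii) does not come from a Young splitting of the cubic, but rather from the Gagliardo--Nirenberg interpolation $\|e\|_{\mathbb{L}^3}^3\le C\|e\|_{\mathbb{H}^{-1}}\|\nabla e\|^2$ used in the bootstrap, which ties the gradient bound to the $\mathbb{H}^{-1}$ bound and the stopping-time threshold $\eps^{\sigma_0}$. With that adjustment your outline is correct and matches the cited source.
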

\rev{A closer inspection of the proofs in \cite{abk18} (cf. \cite[Lemma 4.3]{abk18} in particular) reveals that the
parameter $\ellch$ can be chosen arbitrarily large in the above theorem.}

{We now use Lemma \ref{lem_rest} to show bounds for the difference $u-u_{\tt CH}$ in different norms.}
\begin{lem}\label{RL2est}
{Suppose {\bf (A)}, and $\varepsilon \leq \varepsilon_0$, for
$\varepsilon_0 \equiv \varepsilon_0(\sigma_0, \kappa_0)>0$ sufficiently small.}
There exists
$C>0$  such that
\begin{equation*}\label{L2estnew}
\begin{split}
\mathbb{E} \Big{[} \|u -u_{\tt CH}\|_{L^\infty(0,T;\mathbb{H}^{-1})}^2 
+ {\eps \|\nabla [u -u_{\tt CH}]\|^2_{L^2(0,T; \mathbb{L}^{2})}}
\Big{]} 
\leq 
C\eps^{\frac{2\sigma_0}{3}}\, .
\end{split}
\end{equation*}

\end{lem}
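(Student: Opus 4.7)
The plan is to convert the probabilistic bounds of Lemma~\ref{lem_rest} into the expectation bound asserted, by decomposing $\Omega$ into the favourable set $\Omega_1$ on which those bounds hold pathwise and its small complement, then controlling the tail via higher moments of the energy.

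I would begin with a triangle inequality: by the arbitrary closeness of $u_{\tt A}$ to $u_{\tt CH}$ in $C^2(\overline{\mathcal{D}}_T)$, the claim reduces to the same estimate with $u_{\tt CH}$ replaced by $u_{\tt A}$, since the deterministic residual $\|u_{\tt A} - u_{\tt CH}\|_{L^\infty(0,T;\mathbb{H}^{-1})}^2 + \varepsilon\|\nabla(u_{\tt A}-u_{\tt CH})\|_{L^2(0,T;\mathbb{L}^2)}^2$ can be chosen of order $\varepsilon^{2\sigma_0/3}$. Writing
\[
X := \|u - u_{\tt A}\|_{L^\infty(0,T;\mathbb{H}^{-1})}^2 + \varepsilon \|\nabla(u-u_{\tt A})\|_{L^2(0,T;\mathbb{L}^2)}^2\,,
\]
and letting $\Omega_1$ be the intersection of the two good events in Lemma~\ref{lem_rest}~i), ii), I split $\mathbb{E}[X] = \mathbb{E}[\mathbf{1}_{\Omega_1} X] + \mathbb{E}[\mathbf{1}_{\Omega\setminus\Omega_1} X]$. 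On $\Omega_1$, Lemma~\ref{lem_rest} yields $X \leq C\varepsilon^{\kappa_0} + C\varepsilon^{2\sigma_0/3} \leq C\varepsilon^{2\sigma_0/3}$, using $\kappa_0 > 2\sigma_0/3 + 4$ from condition~(A).

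For the tail contribution, Cauchy--Schwarz gives
\[
\mathbb{E}[\mathbf{1}_{\Omega\setminus\Omega_1} X] \leq \mathbb{P}[\Omega\setminus\Omega_1]^{1/2}\,\bigl(\mathbb{E}[X^2]\bigr)^{1/2}\,,
\]
with $\mathbb{P}[\Omega\setminus\Omega_1] \leq C\varepsilon^{(\gamma + (\sigma_0+1)/3 - \kappa_0)\ellch}$ by Lemma~\ref{lem_rest}. To bound $\mathbb{E}[X^2]$, I invoke the pointwise inequalities $\|u\|_{\mathbb{L}^2}^4 \leq C(1 + \varepsilon \mathcal{E}(u))$ (which follows from $\|u\|_{\mathbb{L}^4}^4 \leq 4\varepsilon\mathcal{E}(u) + C$) and $\varepsilon\|\nabla u\|^2 \leq 2\mathcal{E}(u)$, together with Jensen's inequality in time, to reduce matters to $\mathbb{E}[\sup_{t\in[0,T]} \mathcal{E}(u(t))^2]$; Lemma~\ref{lem_ener_cont}~ii) with $p=2$ combined with $\mathcal{E}(u_0^\varepsilon)\leq C$ from~(A) then yields $\mathbb{E}[X^2] \leq C$ \emph{uniformly} in $\varepsilon$. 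Since the exponent $\gamma + (\sigma_0+1)/3 - \kappa_0$ is positive under~(A) and $\ellch$ may be chosen arbitrarily large (per the remark after Lemma~\ref{lem_rest}), taking $\ellch$ sufficiently large forces $\mathbb{E}[\mathbf{1}_{\Omega\setminus\Omega_1} X] \leq C\varepsilon^{2\sigma_0/3}$.

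The main obstacle is securing the $\varepsilon$-independent bound on $\mathbb{E}[X^2]$: it is essential that the polynomial double-well $F$ couples $\|u\|_{\mathbb{L}^4}$ with the energy so that $\mathbb{L}^2$-moments of $u$ remain controlled, and that $\varepsilon\|\nabla u\|^2$ is dominated by $\mathcal{E}(u)$ so that the gradient contribution to $X$ is tame. If $\mathbb{E}[X^2]$ grew even polynomially in $\varepsilon^{-1}$, the argument would still work but would require a delicate balancing with $\ellch$; the uniform bound $\mathbb{E}[X^2] \leq C$ makes the final exponent count immediate, and the triangle-inequality residual from $u_{\tt A} - u_{\tt CH}$ is already of the desired order by construction.
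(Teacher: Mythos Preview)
Your proposal is correct and follows essentially the same approach as the paper: split the expectation over a good set (where Lemma~\ref{lem_rest} gives the pathwise bound $C\varepsilon^{2\sigma_0/3}$) and its complement, use Cauchy--Schwarz on the complement together with the higher-moment energy bound from Lemma~\ref{lem_ener_cont}~ii), and choose $\ellch$ large enough to make the tail contribution subordinate. The paper defines its good set $\widetilde{\Omega}_1$ directly via the sum of the two error quantities being $\leq C\varepsilon^{2\sigma_0/3}$, which amounts to the intersection you take, and handles the deterministic residual $u_{\tt A}-u_{\tt CH}$ by citing the explicit bound $\|u_{\tt A}-u_{\tt CH}\|_{L^\infty(0,T;\mathbb{H}^{-1})}^2 + \|u_{\tt A}-u_{\tt CH}\|_{L^2(0,T;\mathbb{H}^1)}^2 \leq C\varepsilon^{2\gamma}$ from \cite[Theorem~2.1]{abc}, rather than the vaguer ``arbitrary closeness'' you invoke; since $2\gamma > 2\sigma_0/3$ under {\bf (A)}, this gives what you need.
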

\begin{proof}
{By \cite[Theorem~2.1]{abc} (see also \cite[Theorem 4.11 and Remark 4.6]{abc})
{there exists} $u_{\tt A} \in C^2(\overline{\mathcal{D}}_T)
{\cap {\mathbb L}^2_0}$ which satisfies (\ref{genspec}) and} 
\begin{equation}\label{est_uauch}
\|u_{\tt A}-u_{\tt CH}\|_{L^\infty(0,T;\mathbb{H}^{-1})}^2 + {\|u_{\tt A}-u_{\tt CH}\|_{L^2(0,T;{\mathbb H}^{1})}^2}
\leq C\eps^{2\gamma}\,,
\end{equation}
and, cf. \cite[Theorem~2.3]{abc},
\begin{equation}\label{est_c1_uauch}
\|u_{\tt A}-u_{\tt CH}\|_{C^1(\mathcal{D}_T)} \leq C\eps\,.
\end{equation}
By using the energy bound for $u_{\tt CH}$ and (\ref{est_c1_uauch}) we get
$\|{{u}_{\tt A}}\|_{L^\infty(0,T;\mathbb{H}^{1})} \leq C$.

Consider the subset {$\widetilde{\Omega}_1 \subset \Omega$} (cf. \cite[Lemma~4.5,~Lemma~4.6]{abk18}),
\revised{%
\[
\widetilde{\Omega}_1 :=  \D\big\{ \omega \in \Omega:\, \|u-u_{\tt A}\|^2_{L^\infty(0,T, \mathbb{H}^{-1})} 
+ {\eps \|\nabla [u -u_{\tt A}]\|^2_{L^2(0,T; \mathbb{L}^{2})}  \leq {C}\eps^{\frac{2\sigma_0}{3}}  } \big\}\,.
\]}%
By {Lemma \ref{lem_rest}, ii)}, we have  $\mathbb{P}[\widetilde{\Omega}_1^c] \leq C\eps^{\big(\gamma + \frac{\sigma_0+1}{3} - \kappa_{0}\big)\ellch} \rev{<1}$,
for sufficiently large $\ellch>0$. 
Then using Lemma~\ref{lem_ener_cont}, ii)  and (\ref{est_c1_uauch}),
%
%
we  estimate the error 
$$
{{\tt Err}_{\tt A}} :=\|u-u_{\tt A}\|_{L^\infty(0,T;\mathbb{H}^{-1})}^2  + {\eps \|\nabla [u -u_{\tt A}]\|^2_{L^2(0,T; \mathbb{L}^{2})} }\, ,
$$ 
as 
\begin{equation*}
\begin{split}
\mathbb{E}\big[{\tt Err}_{\tt A}\big] & =\int_{\Omega}\mathbbm{1}_{\widetilde{\Omega}_1}{\tt Err}_{\tt A} \, {\rm d}\omega +
\int_{\Omega}\mathbbm{1}_{\widetilde{\Omega}_1^c}{\tt Err}_{\tt A} \, {\rm d}\omega\\
&\leq  {C}{\eps^{\frac{2\sigma_0}{3}}} + 
C {\bigl( {\mathbb P}[ {\widetilde{\Omega}_1^c}] \bigr)^{1/2}} 
\Bigl(\mathbb{E}\Big[\sup_{[0,T]}\mathcal{E}\bigl(u(t)\bigr)^2\Big] 
+ \|u_{\tt A}\|_{L^\infty(0,T;\mathbb{H}^{1})}^2
 \Bigr)^{1/2}\\
&\leq  C\bigl( {\eps^{\frac{2\sigma_0}{3}}}
+ {\eps^{(\gamma + \frac{\sigma_0+1}{3} - \kappa_{0})\frac{\ellch}{2}}} \bigr) \, .
\end{split}
\end{equation*}
It is due to {\bf (A)} that $\gamma + \frac{\sigma_0+1}{3} - \kappa_{0} > 0$. We
now choose $\ellch$ sufficiently large  such that $\big(\gamma + \frac{\sigma_0+1}{3} - \kappa_{0}\big)\frac{\ellch}{2} > \frac{2}{3}\sigma_0$
and the statement follows from the estimate for ${\tt Err}_{\tt A}$ and (\ref{est_uauch}) by the triangle inequality.

%
%
\qed
\end{proof}

\section{A time discretization Scheme for (\ref{p1})}\label{asec_time}


For fixed $J \in {\mathbb N}$, let $0=t_0<t_1<\cdots<t_J=T$ be an
equidistant partition of $[0,T]$ with step size $k = \frac{T}{J}$, and $\Delta_j W := W(t_j) - W(t_{j-1})$, $j=1,\dots, J$. 
We  approximate \eqref{p1} by the following scheme:
\begin{scheme}\label{scheme_time} For every $1 \leq j \leq J$, find 
a $[{\mathbb H}^1]^2$-valued r.v.~$(X^j, w^j)$
 such that ${\mathbb P}$-a.s.
\begin{equation*}
\begin{split}
&(X^j-X^{j-1},\varphi)+k(\nabla w^{j},\nabla
\varphi)=\varepsilon^{\gamma}\bigl(g,\varphi
\bigr)\Delta_j W
\;\;\;\; \, \, \quad \, \forall\, \varphi \in {\mathbb H}^1\,,\\
&\varepsilon(\nabla X^j,\nabla \psi)+\frac{1}{\varepsilon}
\bigl(f(X^j),\psi \bigr)=(w^j,\psi) \qquad \qquad \quad
\quad \ \, \forall\, \psi \in
{\mathbb H}^1\, ,\\
&X^0 =u_0^\eps \in {\mathbb H}^1\, .
\end{split}
\end{equation*}
\end{scheme}
The solvability  and uniqueness 
of $\{(X^j, w^j)\}_{j\geq 1}$,
as well as the $\mathbb{P}$-a.s.~conservation of mass
of $\{X^j\}_{j\geq 1}$ are immediate.

For the error analysis of Scheme~\ref{scheme_time}, we use the 
iterates $\bigl\{ (X^{j}_{\tt CH}, w^{j}_{\tt CH})\bigr\}_{j=0}^J \subset \bigl[ {\mathbb H}^1\bigr]^2$  which solve Scheme~\ref{scheme_time} for $g \equiv 0$. 
\rev{The following lemma collects the properties of these iterates from \cite{CHFP,fp_ifb05}. 
We remark that, compared to \cite{CHFP,fp_ifb05}, the results are stated in a simplified (but equivalent) form, which is more suitable for the subsequent analysis.}
\begin{lem}\label{lem_xch}
Suppose ${\mathcal E}(u^{\varepsilon}_0) \leq C$.
Let $\bigl\{ (X^{j}_{\tt CH}, w^{j}_{\tt CH})\bigr\}_{j=0}^J \subset \bigl[ {\mathbb H}^1\bigr]^2$ be the solution of Scheme \ref{scheme_time} for $g \equiv 0$. For every 
$0 <\beta < \frac{1}{2}$, $\varepsilon \in (0, \eps_0) $, $k \leq \eps^3$, 
and ${\mathfrak p}_{\tt CH} >0$, there exist
${\mathfrak m}_{\tt CH}, {\mathfrak n}_{\tt CH}, C>0$, {and ${\mathfrak l}_{\tt CH} \geq 3$} such that
\begin{eqnarray*}
\ \quad \quad {\rm i)} & \quad& \D \max_{1 \leq j \leq J}{\mathcal E}(X^j_{\tt CH}) \leq 
 {\mathcal E}(u_0^{\varepsilon})\,.   \quad \qquad \qquad \qquad \qquad \qquad \qquad \qquad \qquad \qquad \qquad \quad \,   
\end{eqnarray*}
Assume moreover $\|u_0^{\varepsilon}\|_{\mathbb{H}^2} \leq C\eps^{-\mathfrak{p}_{\tt CH}}$, then
\begin{eqnarray*}
\ \quad \quad {\rm ii)} & \quad & \D \max_{1 \leq j \leq J} \|X^{j}_{\tt CH}\|_{\mathbb{H}^2} \leq C \eps^{-\mathfrak{n}_{\tt CH}}\,,
\quad \qquad \qquad \qquad \qquad \qquad \qquad \qquad \qquad \qquad \qquad \,\,\,
\\
\ \quad \quad {\rm iii)} & \quad & \D \max_{1 \leq j \leq J} \|X^{j}_{\tt CH}\|_{\mathbb{L}^\infty} \leq C\,
\quad\mathrm{for} \quad k \leq C \varepsilon^{{\mathfrak l}_{\tt CH}}.
\qquad \qquad \qquad \qquad\,\,\,
\end{eqnarray*}
{Assume in addition  
$\|u_0^{\varepsilon}\|_{\mathbb{H}^3} \leq C\eps^{-\mathfrak{p}_{\tt CH}}$.}  Then for $k \leq C \varepsilon^{{\mathfrak l}_{\tt CH}}$, and $C_0 >0$ from
(\ref{genspec}) it holds
\begin{eqnarray*}
{\rm iv)} \quad& \D \max_{1 \leq j \leq J} \Vert u_{\tt CH}(t_j) - X^j_{\tt CH}\Vert^2_{{\mathbb H}^{-1}} 
  + \sum_{j=1}^J k^{1+\beta} \big\Vert \nabla \bigl[u_{\tt CH}(t_j) - X^j_{\tt CH} \bigr]\big\Vert^2 \leq C  \frac{k^{{2}-\beta}}{ \varepsilon^{{\mathfrak m}_{\tt CH}}}\, , \\
{\rm v)} \quad& \D \inf_{0\leq t\leq T}\inf_{\psi\in
\mathbb{H}^{1}, \; w=\deltainv\psi} \frac{\eps\|\nabla\psi\|^2+\frac{1-\eps^3}{\eps}\bigl(f'(X^j_{\tt CH})\psi,\psi \bigr)}{\|\nabla w\|^2}\geq -(1-\eps^3)(C_0+1)\, .
\end{eqnarray*}
\end{lem}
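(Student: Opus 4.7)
My plan is to establish the five items sequentially, building on the fact that Scheme \ref{scheme_time} with $g\equiv 0$ is a gradient-flow-preserving discretization of the deterministic Cahn-Hilliard equation, and importing the techniques of \cite{CHFP,fp_ifb05}. For (i), I would run the standard discrete energy identity: test the first equation with $\varphi = w^j_{\tt CH}$ and the second with $\psi = X^j_{\tt CH} - X^{j-1}_{\tt CH}$, add them, and use the elementary identity $(a,a-b) = \tfrac12(|a|^2 - |b|^2 + |a-b|^2)$ on the gradient term together with the pointwise bound $f(a)(a-b) \geq F(a) - F(b) - \tfrac12(a-b)^2$, whose quadratic defect (coming from the concave part of $F$) is absorbed using the assumption $k \leq \eps^3$; summing the resulting telescoping inequality over $j$ yields (i). For (ii), I would test the second equation with $-\Delta X^j_{\tt CH}$ (admissible by the Neumann condition), estimate the nonlinear contribution via $\tfrac{1}{\eps}\|f(X^j_{\tt CH})\|\cdot\|\Delta X^j_{\tt CH}\|$ using Sobolev embeddings and (i), and close via elliptic regularity applied to $w^j_{\tt CH} = -\eps\Delta X^j_{\tt CH} + \tfrac{1}{\eps}f(X^j_{\tt CH})$, at the cost of a polynomial factor $\eps^{-\mathfrak{n}_{\tt CH}}$.

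The uniform $\mathbb{L}^\infty$-bound (iii) cannot come from $\mathbb{H}^2\hookrightarrow\mathbb{L}^\infty$ alone, since that would inherit the $\eps^{-\mathfrak{n}_{\tt CH}}$ blow-up of (ii). Instead, I would split
\[
\|X^j_{\tt CH}\|_{\mathbb{L}^\infty} \leq \|u_{\tt CH}(t_j)\|_{\mathbb{L}^\infty} + \|u_{\tt CH}(t_j) - X^j_{\tt CH}\|_{\mathbb{L}^\infty},
\]
where the first term is uniformly bounded by the standard regularity theory for the deterministic Cahn-Hilliard equation (exploiting the regularity assumption on $u_0^\eps$), while the second is controlled by interpolating $\|\cdot\|_{\mathbb{L}^\infty} \lesssim \|\cdot\|_{\mathbb{H}^2}^{\theta}\|\cdot\|_{\mathbb{H}^{-1}}^{1-\theta}$ between the bounds of (ii) and (iv); it becomes negligible under $k \leq C\eps^{\mathfrak{l}_{\tt CH}}$ for $\mathfrak{l}_{\tt CH}$ sufficiently large. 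The core of the argument, and the main obstacle, is (iv). I would introduce $e^j := u_{\tt CH}(t_j) - X^j_{\tt CH}$ and $\eta^j := w_{\tt CH}(t_j) - w^j_{\tt CH}$, test the first error equation with $\deltainv\overline{e^j}$ and the second with $\overline{e^j}$, and then split the nonlinear error as $f(u_{\tt CH}) - f(X^j_{\tt CH}) = f'(u_{\tt A})e^j + [f'(X^j_{\tt CH}) - f'(u_{\tt A})]e^j + R$, with cubic remainder $R = R(e^j)$. The leading linear piece combines with $\eps\|\nabla e^j\|^2$ and is absorbed through the spectral estimate (\ref{genspec}) for $u_{\tt A}$; the middle term is controlled by $\|X^j_{\tt CH} - u_{\tt A}\|_{\mathbb{L}^\infty}$; the cubic remainder, scaled by $\eps^{-1}$, is the dangerous term and is tamed by an induction on $j$ using the discrete $\mathbb{L}^\infty$-bound from (iii), while the consistency error is estimated via the continuous regularity of $(u_{\tt CH},w_{\tt CH})$. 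Summation in $j$ without invoking Gronwall delivers the rate $k^{2-\beta}/\eps^{\mathfrak{m}_{\tt CH}}$, and the admissible triples $(\beta,\mathfrak{m}_{\tt CH},\mathfrak{l}_{\tt CH})$ are fixed by balancing the resulting algebraic powers of $k$ and $\eps$.

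For (v), I would transfer (\ref{genspec}) from $u_{\tt A}$ to $X^j_{\tt CH}$ via the linear splitting
\[
f'(X^j_{\tt CH}) = f'(u_{\tt A}(t_j)) + \bigl[f'(X^j_{\tt CH}) - f'(u_{\tt A}(t_j))\bigr].
\]
The perturbation term is bounded pointwise by $3(\|X^j_{\tt CH}\|_{\mathbb{L}^\infty} + \|u_{\tt A}\|_{\mathbb{L}^\infty})\|X^j_{\tt CH} - u_{\tt A}(t_j)\|_{\mathbb{L}^\infty}$, and the last factor is of order $\eps^{\alpha}$ for some $\alpha > 0$ by the triangle inequality through $u_{\tt CH}(t_j)$, using (\ref{est_c1_uauch}) together with the $\mathbb{L}^\infty$-error extracted from (iii)--(iv), provided $k \leq C\eps^{\mathfrak{l}_{\tt CH}}$. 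Inserting into the Rayleigh quotient, the contribution $\tfrac{1}{\eps}\bigl([f'(X^j_{\tt CH}) - f'(u_{\tt A})]\psi,\psi\bigr)$ scales like $\eps^{\alpha-1}\|\psi\|^2$ and is absorbed against $\|\nabla w\|^2$ via the mean-zero Poincar\'e equivalence $\|\psi\|^2 \leq C\|\nabla w\|^2$; a small redistribution between the $\eps\|\nabla\psi\|^2$ term and the nonlinear term on the left then produces the stated weight $(1-\eps^3)$ and the slightly degraded constant $-(1-\eps^3)(C_0+1)$.
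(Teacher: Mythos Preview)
Your overall plan matches the strategy in \cite{CHFP,fp_ifb05}, which is exactly what the paper invokes (the paper's own proof is essentially a citation, plus the short argument for (iii) via interpolation and the $\mathbb{L}^\infty$-bound on $u_{\tt CH}$). Two points, however, need repair.

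\textbf{Circularity between (iii) and (iv).} As written, you derive (iii) from (iv) and then use (iii) inside the proof of (iv). The resolution in \cite{CHFP} is that both are proved \emph{simultaneously} by induction on $j$: the induction hypothesis is the error bound of (iv) up to step $j-1$; interpolation then gives an $\mathbb{L}^\infty$-bound on $e^{j-1}$ (hence on $X^{j-1}_{\tt CH}$), which controls the cubic remainder at step $j$, closing (iv) at $j$; only after the full induction does (iii) fall out as a corollary. Your sketch has the right ingredients but should be reorganised to avoid the apparent loop.

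\textbf{A wrong inequality in (v).} You write that the perturbation $\varepsilon^{\alpha-1}\|\psi\|^2$ is ``absorbed against $\|\nabla w\|^2$ via the mean-zero Poincar\'e equivalence $\|\psi\|^2 \leq C\|\nabla w\|^2$''. This inequality is false: with $w=\deltainv\psi$ one has $\|\nabla w\| = \|\psi\|_{\mathbb H^{-1}}$, and $\|\psi\|/\|\psi\|_{\mathbb H^{-1}}$ is unbounded on high-frequency $\psi$. The correct mechanism is the interpolation
\[
\|\psi\|^2 = (\psi,-\Delta w) = (\nabla\psi,\nabla w) \leq \|\nabla\psi\|\,\|\nabla w\|,
\]
followed by Young's inequality, so that one half is absorbed by the reserved $\varepsilon^4\|\nabla\psi\|^2$ coming from the split $\varepsilon\|\nabla\psi\|^2 = (1-\varepsilon^3)\varepsilon\|\nabla\psi\|^2 + \varepsilon^4\|\nabla\psi\|^2$, and the other half produces the degraded constant $-(1-\varepsilon^3)(C_0+1)$. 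This is precisely why the weight $(1-\varepsilon^3)$ appears in the statement: it is not cosmetic, it supplies the $\varepsilon^4\|\nabla\psi\|^2$ needed to swallow the perturbation. For this to close you also need $\|X^j_{\tt CH}-u_{\tt A}\|_{\mathbb L^\infty}$ of sufficiently high order in $\varepsilon$, which is available since the approximation $u_{\tt A}$ from \cite{abc} can be taken of arbitrarily high order and the discrete $\mathbb L^\infty$-error is made small by choosing $\mathfrak l_{\tt CH}$ large.
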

\begin{proof}
The proof of i), ii), iv), v) is a direct consequence of \cite[Lemma~3, Corollary~1, Proposition~2]{CHFP}.

To show iii), we use the Gagliardo-Nirenberg inequality and \cite[inequality (76)]{CHFP},
ii),~iv) to get the following $\mathbb{L}^{\infty}$-error estimate
for $k \leq C\eps^{\mathfrak{l}_{\tt CH}}$, and some $\mathfrak{l}_{\tt CH}>0$,
$$
\max_{1 \leq j \leq J}\|\XCH^j - u_{\tt CH}(t_j)\|_{\mathbb{L}^\infty} \leq \eps^2\, .
$$
Hence, 
$\|\XCH^j\|_{\mathbb{L}^\infty}\leq C$
since {$\|u_{\tt CH}\|_{\mathbb{L}^\infty}\leq C$}; cf.~\cite[proof of Theorem.~2.3]{abc} and \cite[Lemma~2.2]{fp_ifb05}.
\qed
\end{proof}
%
%
The numerical solution of Scheme \ref{scheme_time}
satisfies the discrete counterpart of the energy estimate in Lemma \ref{lem_ener_cont}, i).
The time-step constraint in the lemma below is a consequence of the implicit treatment of the nonlinearity; see the last term in (\ref{fest0}),
its estimate (\ref{est_l2inc}), and (\ref{est_step1}); the lower bound for admissible $\gamma$ has the same origin.

\begin{lem}\label{lem_energy}
Let $\gamma > \frac{3}{2}$, $\eps \in (0,\eps_0)$
and $k \leq \varepsilon^3$. Then the solution of Scheme \ref{scheme_time} conserves mass along every path $\omega\in\Omega$, and there exists $C > 0$ such that
\begin{itemize}
\item[i)]\quad
$\D \max_{1\leq j\leq J} \mathbb{E}\bigl[ {\mathcal E}(X^j)\bigr]  
+ \frac{k}{2}  \sum_{i=1}^J\mathbb{E}\bigl[\|\nabla w^i\|^2\bigr]
\leq
C \,\bigl(   {{\mathcal E}(u^\varepsilon_0)} +1\bigr)\,,$
\item[ii)]\quad
$\D \mathbb{E}\big[\max_{1\leq j\leq J}{\mathcal E}(X^j) 
\big] 
\leq
C \bigl(  {\mathcal E}(u^\varepsilon_0)  + 1\bigr)\,.$
\end{itemize}
For every $p = 2^r$, $r \in {\mathbb N}$, there exists $C \equiv C(p, T) > 0$ such  that
\begin{itemize}
\item[iii)]\quad $
\D \max_{1\leq j\leq J} \mathbb{E}\bigl[ \vert{\mathcal E}(X^j)\vert^p\bigr] 
\leq C 
\D \bigl( \vert {\mathcal E}(u^\varepsilon_0)\vert^p +1\bigr)\, ,$
\item[iv)]\quad $
 \D \mathbb{E}\big[ \max_{1\leq j\leq J} \vert{\mathcal E}(X^j)\vert^p\big] \leq C 
\bigl(  \vert{\mathcal E}(u^\varepsilon_0)\vert^p +1\bigr)\, .$
\end{itemize}
\end{lem}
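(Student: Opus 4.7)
The plan is to derive a discrete analogue of the continuous energy identity from Lemma~\ref{lem_ener_cont}, with the stochastic right-hand side handled by a martingale/corrector decomposition. I would test the first equation of Scheme~\ref{scheme_time} with $\varphi = w^j$ and the second with $\psi = X^j - X^{j-1}$, then add them. The polarization $\varepsilon(\nabla X^j, \nabla(X^j - X^{j-1})) = \tfrac{\varepsilon}{2}[\|\nabla X^j\|^2 - \|\nabla X^{j-1}\|^2 + \|\nabla(X^j - X^{j-1})\|^2]$ and the weak monotonicity bound $(f(X^j), X^j - X^{j-1}) \geq \int_\mathcal{D}[F(X^j) - F(X^{j-1})]\,\mathrm{d}x - \tfrac12\|X^j - X^{j-1}\|^2$ (a consequence of $f' \geq -1$) then produce
\begin{equation*}
\mathcal{E}(X^j) - \mathcal{E}(X^{j-1}) + \tfrac{\varepsilon}{2}\|\nabla(X^j - X^{j-1})\|^2 + k\|\nabla w^j\|^2 \leq \tfrac{1}{2\varepsilon}\|X^j - X^{j-1}\|^2 + \varepsilon^\gamma(g, w^j)\Delta_j W.
\end{equation*}

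The main obstacle is the anti-dissipative term $\tfrac{1}{2\varepsilon}\|X^j - X^{j-1}\|^2$. Testing the first scheme equation with $\varphi \equiv 1$ shows that $X^j - X^{j-1}$ is mean-zero (since $\int_\mathcal{D} g\,\mathrm{d}x = 0$), so $\nabla\deltainv(X^j - X^{j-1}) = -k\nabla w^j - \varepsilon^\gamma \nabla\deltainv g\,\Delta_j W$ and hence $\|X^j - X^{j-1}\|_{\mathbb{H}^{-1}}^2 \leq 2k^2\|\nabla w^j\|^2 + 2\varepsilon^{2\gamma}\|\nabla\deltainv g\|^2|\Delta_j W|^2$. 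Combining with the planar interpolation $\|u\|^2 \leq C\|u\|_{\mathbb{H}^{-1}}\|\nabla u\|$ for mean-zero $u$ and Young's inequality yields
\begin{equation*}
\tfrac{1}{2\varepsilon}\|X^j - X^{j-1}\|^2 \leq \tfrac{\varepsilon}{4}\|\nabla(X^j - X^{j-1})\|^2 + \tfrac{Ck^2}{\varepsilon^3}\|\nabla w^j\|^2 + C\varepsilon^{2\gamma-3}\|\nabla\deltainv g\|^2|\Delta_j W|^2.
\end{equation*}
The step-size restriction $k \leq \varepsilon^3$ makes the first two terms absorbable into the left-hand side, and the third contributes at most $CT\varepsilon^{2\gamma-3}\|\nabla\deltainv g\|^2$ in expectation, which is bounded exactly under the assumption $\gamma > \tfrac{3}{2}$.

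For the stochastic term I would use the $\mathcal{F}_{t_{j-1}}$-split $w^j = w^{j-1} + (w^j - w^{j-1})$: the martingale sum $M^{J'} := \sum_{j \leq J'} \varepsilon^\gamma(g, w^{j-1})\Delta_j W$ has vanishing expectation, while the corrector, expressed through the second scheme equation as $\varepsilon^\gamma(g, w^j - w^{j-1}) = \varepsilon^{\gamma+1}(\nabla g, \nabla(X^j - X^{j-1})) + \varepsilon^{\gamma-1}(g, f(X^j) - f(X^{j-1}))$, is handled by Young and H\"older using the $\mathbb{L}^4$-control inherent in $\mathcal{E}$; both pieces are absorbed into the dissipation on the left. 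This yields~i). For~ii), I apply Doob's maximal inequality to $M^{J'}$, whose quadratic variation $k\varepsilon^{2\gamma}\sum_j(\nabla\deltainv g, \nabla w^{j-1})^2 \leq k\varepsilon^{2\gamma}\|\nabla\deltainv g\|^2\sum_j \|\nabla w^{j-1}\|^2$ is controlled in expectation by~i).

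Parts iii) and iv) are proved by induction on $r$ in $p = 2^r$: after raising the accumulated energy inequality to the $p$-th power (using $|a+b|^p \leq 2^{p-1}(|a|^p + |b|^p)$), the martingale piece is bounded by the Burkholder--Davis--Gundy inequality of index $p$, $\mathbb{E}[|M^{J}|^p] \leq C_p \mathbb{E}[\langle M\rangle_J^{p/2}]$, and the arising $\mathbb{E}[(k\sum_j\|\nabla w^{j-1}\|^2)^{p/2}]$ is controlled via the induction hypothesis for $p/2$ together with Cauchy--Schwarz, closing the recursion. The key technical difficulty throughout is handling the anti-dissipative $\|X^j - X^{j-1}\|^2/\varepsilon$ contribution without invoking Gronwall; the interplay between $k \leq \varepsilon^3$ and $\gamma > \tfrac{3}{2}$ is precisely what makes the absorption possible and pins down the thresholds in the hypotheses.
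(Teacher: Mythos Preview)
Your overall strategy---test with $(w^j, X^j-X^{j-1})$, control the anti-dissipative $\frac{1}{2\varepsilon}\|X^j-X^{j-1}\|^2$ through the $\mathbb{H}^{-1}$ increment bound, and split the stochastic term into a martingale plus a corrector via the second scheme equation---is exactly the paper's route, and your identification of why $k\le\varepsilon^3$ and $\gamma>\tfrac32$ are the right thresholds is correct.

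There is, however, a genuine gap in part~i). The convexity-defect bound you use,
\[
(f(X^j),X^j-X^{j-1})\ge\int_{\mathcal D}[F(X^j)-F(X^{j-1})]\,{\rm d}x-\tfrac12\|X^j-X^{j-1}\|^2,
\]
is weaker than the one the paper employs (see (\ref{fest0})), which additionally produces a positive term $\tfrac{1}{4\varepsilon}\|\mathfrak f(X^j)-\mathfrak f(X^{j-1})\|^2$ with $\mathfrak f(u)=u^2-1$. This extra piece of numerical dissipation is not cosmetic: it is precisely what absorbs the \emph{non-adapted} part of the cubic corrector $\varepsilon^{\gamma-1}(g,f(X^j)-f(X^{j-1}))\Delta_jW$. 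Writing $f(X^j)-f(X^{j-1})=[\mathfrak f(X^j)-\mathfrak f(X^{j-1})]X^j+\mathfrak f(X^{j-1})(X^j-X^{j-1})$, the second summand is handled as you indicate, but the first carries an implicit $X^j$ factor and, after Young's inequality, leaves a term of the type $C\varepsilon^{2\gamma-1}\|X^j\|_{\mathbb L^4}^4|\Delta_jW|^2$ on the right. Since $X^j$ and $\Delta_jW$ are correlated, you cannot factor this expectation, and ``$\mathbb L^4$-control inherent in $\mathcal E$'' does not help: $\|X^j\|_{\mathbb L^4}^4\le C\varepsilon\mathcal E(X^j)+C$ puts $\mathcal E(X^j)$ itself on the right with a random coefficient that is pathwise unbounded. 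The paper instead absorbs $\|\mathfrak f(X^j)-\mathfrak f(X^{j-1})\|$ into the extra dissipation, reducing everything to adapted quantities $\|\mathfrak f(X^{j-1})\|^2$ and moments of $\Delta_jW$; this is the missing ingredient in your sketch.

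For iii) and iv), your plan to raise the \emph{summed} inequality to power $p$ and invoke BDG is different from the paper and also has a gap: the quantity $\mathbb E\bigl[(k\sum_j\|\nabla w^{j-1}\|^2)^{p/2}\bigr]$ that would appear is a higher moment of the accumulated dissipation, which the induction hypothesis (a bound on $\max_j\mathbb E[\mathcal E(X^j)^{p/2}]$) does not supply. The paper instead multiplies the \emph{one-step} inequality by $\mathcal E(X^j)$ and uses $(a-b)a=\tfrac12[a^2-b^2+(a-b)^2]$ to obtain a telescoping recursion for $|\mathcal E(X^j)|^2$, again relying on the $\mathfrak f$-dissipation to push non-adapted terms back to time level $j-1$; the induction on $r$ then proceeds by repeating this multiplication.
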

\begin{proof} 
{\bf i)} For  $\omega \in \Omega$ fixed,
we choose $\varphi=w^j(\omega)$ and $\psi=[X^j-X^{j-1}](\omega)$ in
Scheme \ref{scheme_time}. Adding both equations then leads to ${\mathbb P}$-a.s.
\begin{equation}\label{test1}
\begin{array}{lll}
\displaystyle 
\revised{\frac{\eps}{2} \|\nabla X^j\|^2 - \frac{\eps}{2} \|\nabla X^{j-1}\|^2}
 + \frac{\eps}{2} \|\nabla[X^j - X^{j-1}]\|^2 + k\|\nabla w^j\|^2
\\
 \displaystyle \qquad + \frac{1}{\eps}\bigl(f(X^j), X^j-X^{j-1}\bigr)
= \eps^\gamma(g, w^j)\Delta_jW\, .
\end{array}
\end{equation}
{Note that the third term on the left-hand side reflects the numerical dissipativity in the scheme.}
We can estimate the nonlinear term as (cf.~\cite[Section~3.1]{ACFP}), 
\begin{eqnarray}\nonumber
 \bigl(f(X^j), X^j-X^{j-1} \bigr) &\geq& 
\revised{\frac{1}{4} \|{\mathfrak f}(X^j)\|^2-\frac{1}{4}\|{\mathfrak f}(X^{j-1})\|^2}
 \\ \label{fest0}
&& + \frac{1}{4}\|{\mathfrak f}(X^{j})-{\mathfrak f}(X^{j-1})\|^2
- \frac{1}{2}\|X^j-X^{j-1}\|^2\, ,
\end{eqnarray}
where we employ the notation ${\mathfrak f}(u) := |u|^2 -1$, i.e., $f(X^j)= {\mathfrak f}(X^j)X^j$. The \revised{third} term on the right-hand side again reflects
 numerical dissipativity.

By  $\omega \in \Omega$ fixed, and $\varphi = \deltainv[X^j-X^{j-1}](\omega)$
in Scheme \ref{scheme_time}, we eventually have ${\mathbb P}$-a.s.,
$$
\|\Delta^{-1/2}[X^j-X^{j-1}]\|^2\leq \Big(k\|\nabla w^j\| + \eps^{\gamma}\|\Delta^{-1/2} g\||\Delta_j W |\Big)\|\Delta^{-1/2}[X^j-X^{j-1}]\| \, ,
$$
which together with $\|\Delta^{-1/2} g\|\leq C$ yields the estimate
\begin{equation*}\label{est_inc0}
\|\Delta^{-1/2}[X^j-X^{j-1}]\|^2\leq  2k^2\|\nabla w^j\|^2  + C\eps^{2\gamma}|\Delta_j W|^2\, .
\end{equation*}
Hence, using this estimate, {and exploiting again the inherent numerical
dissipation of the scheme} we can estimate
\begin{equation}\label{est_l2inc}
\begin{split}
\frac{1}{2\eps}\|X^j-X^{j-1}\|^2 & = \frac{1}{2\eps}\bigl(\nabla\deltainv[X^j-X^{j-1}], \nabla [X^j-X^{j-1}] \bigr) 
\\
& \leq \frac{1}{4\eps^3}\|\Delta^{-1/2}[{X^j-X^{j-1}}]\|^2 + \frac{\eps}{4}\|\nabla[X^j-X^{j-1}]\|^2
\\
& \leq 
\frac{k^2}{2 \eps^3}\|\nabla w^j\|^2  + C\eps^{2\gamma-3}|\Delta_j W|^2 + \frac{\eps}{4}\|\nabla[X^j-X^{j-1}]\|^2\, .
\end{split}
\end{equation}
We substitute (\ref{fest0}) along with the last inequality into (\ref{test1})
and get
\begin{equation}\label{est_step1}
\begin{split}
& \frac{\eps}{2} \bigl(\|\nabla X^j\|^2 - \|\nabla X^{j-1}\|^2\bigr) + \frac{\eps}{4} \|\nabla[X^j - X^{j-1}]\|^2 
\\ 
 &\quad +\frac{1}{4\eps} \Bigl(\|{\mathfrak f}(X^j)\|^2- \|{\mathfrak f}(X^{j-1})\|^2 + \|{\mathfrak f}(X^{j})-{\mathfrak f}(X^{j-1})\|^2\Bigr) + \big(k-\frac{k^2}{2 \eps^3}\big)\|\nabla w^j\|^2
\\
&\qquad   
\rev{\leq} \eps^\gamma(g, w^j)\Delta_jW + C\eps^{2\gamma-3}|\Delta_j W|^2\, ,
\end{split}
\end{equation}
which motivates time-steps $k < 2 \varepsilon^3$.
Next, by using the second equation in Scheme \ref{scheme_time}, we can rewrite the first term on the right-hand side as
\begin{equation}\label{lc3}
\begin{split}
\varepsilon^{\gamma} \bigl( g, w^j\bigr) \Delta_j W 
&= \varepsilon^{\gamma+1}
\Bigl[\bigl( \nabla [X^j-X^{j-1}], \nabla g\bigr)  + \bigl( \nabla X^{j-1}, \nabla g\bigr)
\Bigr]\Delta_j W \\
&\qquad  + \varepsilon^{\gamma-1} \Bigl[ \bigl( f(X^j) - f(X^{j-1}), g\bigr) + \bigl( f(X^{j-1}), g\bigr)
 \Bigr]\Delta_j W\,
\\
&=: A_1+A_2+A_3+A_4\, .
 \end{split}
\end{equation}
Note that $\mathbb{E}[A_2]= \mathbb{E}[A_4] = 0$.
Next, we obtain
\begin{equation}\label{est_a1}
\begin{split}
A_1 = \eps^{\gamma+1}\bigl( \nabla [X^j-X^{j-1}], \nabla g\bigr) \Delta_j W 
& \leq 
\frac{\eps}{8}\|\nabla [X^j-X^{j-1}]\|^2 + C\varepsilon^{2\gamma+1}\|\nabla g\|^2|\Delta_j W|^2
\\
& \leq 
\frac{\eps}{8}\|\nabla [X^j-X^{j-1}]\|^2  + C\varepsilon^{2\gamma+1}|\Delta_j W|^2\, .
\end{split}
\end{equation}
On recalling $f(X^j)= {\mathfrak f}(X^j)X^j$, we rewrite the remaining term as
\begin{equation}\label{est_a3}
\begin{split}
A_3  & =  \varepsilon^{\gamma-1} \bigl( f(X^j) - f(X^{j-1}), g\bigr)\Delta_j W
\\
& = \varepsilon^{\gamma-1} \Big( \big[ {\mathfrak f}(X^j) - {\mathfrak f}(X^{j-1})\big] X^{j}, g\Big)\Delta_j W
 + \varepsilon^{\gamma-1} \Big( {\mathfrak f}(X^{j-1})\big[X^{j}-X^{j-1}\big], g\Big)\Delta_j W
\\
&
=: A_{3,1}+A_{3,2}\, .
\end{split}
\end{equation}
Thanks to the embeddings ${\mathbb L}^s \hookrightarrow  {\mathbb L}^r$ ($r \leq s$),
and the Cauchy-Schwarz and Young's inequalities,
\begin{equation}\nonumber
\begin{split}
A_{3,1}
& \leq \frac{1}{16\eps}\|{\mathfrak f}(X^j) - {\mathfrak f}(X^{j-1})\|^2 + C\varepsilon^{2\gamma-1}\||\rev{X^{j}|^2} \|_{\mathbb{L}^1}\|g\|^2_{\mathbb{L}^\infty} |\Delta_j W|^2 
\\
& \leq \frac{1}{16\eps}\|{\mathfrak f}(X^j) - {\mathfrak f}(X^{j-1})\|^2 + C\varepsilon^{2\gamma-1}\Big(\|{\mathfrak f}(X^j) - {\mathfrak f}(X^{j-1})\|_{\mathbb{L}^1} + \|X^{j-1} \|^2\Big) |\Delta_j W|^2
\\
& \leq
\frac{1}{8\eps}\|{\mathfrak f}(X^j) - {\mathfrak f}(X^{j-1})\|^2 + C\varepsilon^{4\gamma-1}|\Delta_j W|^4 + C\varepsilon^{2\gamma-1}\Big(\|{\mathfrak f}(X^{j-1})\|^2+1\Big)|\Delta_j W|^2\,.
\end{split}
\end{equation}
The leading term may now be controlled by the numerical dissipation term
in (\ref{fest0}). Finally, by \revised{the} Poincar\'e's inequality, we estimate
\begin{equation}\nonumber
\begin{split}
A_{3,2} & \leq \|{\mathfrak f}(X^{j-1})\|^2\|g\|^2_{\mathbb{L}^\infty}|\Delta_j W|^2
+ \varepsilon^{2\gamma-2}\|X^{j}-X^{j-1}\|^2
\\
& \leq C \|{\mathfrak f}(X^{j-1})\|^2|\Delta_j W|^2
+ C_{\mathcal{D}}\varepsilon^{2\gamma-2}\|\nabla[X^{j}-X^{j-1}]\|^2\, .
\end{split}
\end{equation}
By combining the above estimates for $A_{3,1}$, $A_{3,2}$ we obtain an estimate for (\ref{est_a3}).

Next, we insert the estimates  (\ref{lc3}), (\ref{est_a1}), and (\ref{est_a3}) into (\ref{est_step1}), account for $2\gamma - 2 > 1$, sum the resulting inequality over $j$  and
take expectations,
\begin{equation}\label{est_step2}
\begin{split}
& \mathbb{E}\bigl[\frac{\eps}{2} \|\nabla X^j\|^2 
 +\frac{1}{4\eps} \|{\mathfrak f}(X^j)\|^2 \bigr] + \frac{1}{8\eps}\sum_{i=1}^j\mathbb{E}\bigl[\|{\mathfrak f}(X^{i})-{\mathfrak f}(X^{i-1})\|^2\bigr]
\\
&\qquad  + \Big(\frac{\eps}{8} - C_{\mathcal{D}}\eps^{2\gamma-2}\Big)\sum_{i=1}^j\mathbb{E}\bigl[\|\nabla[X^i - X^{i-1}]\|^2\bigr] + \big(k-\frac{k^2}{2 \eps^3}\Big)\sum_{i=1}^j\mathbb{E}\bigl[\|\nabla w^i\|^2\bigr]
\\
&\quad  \leq
\mathbb{E}\bigl[\frac{\eps}{2}\|\nabla X^0\|^2 
 +\frac{1}{4\eps} \|{\mathfrak f}(X^0)\|^2\bigr] +
CT\bigl(\varepsilon^{4\gamma-1}k + \rev{\varepsilon^{2\gamma+1}} + \revdim{\varepsilon^{2\gamma-1}} + \varepsilon^{2\gamma-3}\bigr) \\
&\qquad + C(1+\eps^{2\gamma-1}){k}\sum_{i=0}^{j-1} \mathbb{E}\big[\|{\mathfrak f}(X^{i})\|^2\big]\, .
\end{split}
\end{equation}
On noting that $\|F(u)\|_{{\mathbb L}^1} = \frac{1}{4}\|{\mathfrak f}(u)\|^2$, 
assertion i) now follows with the help
of the discrete Gronwall  lemma. \\

{\bf ii)}
The second estimate can be shown along the lines of the first part of the proof
by applying $\max_{j}$ before taking the expectation in (\ref{est_step2}).
The additional term that arises from the terms $A_2$, $A_4$ in (\ref{lc3}) can be rewritten by
using the second equation in Scheme \ref{scheme_time},
\begin{equation}\label{term_bdg}
\begin{array}{lll}
\D \mathbb{E}\Big[\max_{1\leq i\leq j}\Big| \sum_{\ell=1}^i\Big\{
\varepsilon^{\gamma-1} \bigl( f(X^{\ell-1}), g\bigr)
+
\varepsilon^{\gamma+1} \bigl( \nabla X^{\ell-1}, \nabla g\bigr)\Big\} \Delta_\ell W \Big|\Big]
\\
\D \qquad =
\mathbb{E}\Big[\max_{1\leq i\leq j}\Big| \sum_{\ell=1}^i
\varepsilon^{\gamma} \bigl(w^{\ell-1}, g\bigr) \Delta_\ell W \Big|\Big]
=
\mathbb{E}\Big[\max_{1\leq i\leq j}\Big| \sum_{\ell=1}^i
\varepsilon^{\gamma} \bigl(\overline{w}^{\ell-1}, g\bigr) \Delta_\ell W \Big|\Big]
\\
\D \qquad \leq \mathbb{E}\Big[\max_{1\leq i\leq j}\Big| \sum_{\ell=1}^i
\varepsilon^{\gamma} \bigl(\overline{w}^{\ell-1}, g\bigr) \Delta_\ell W \Big|^2\Big]^{1/2}
\, ,
\end{array}
\end{equation}
where the equality in the second line follows from the zero mean property of the noise.

The last sum in (\ref{term_bdg}) is a discrete square-integrable martingale, and by the independence properties of the summands,
the Poincar\'e inequality and the energy estimate i)  we have
\begin{eqnarray*}
\D \mathbb{E}\Big[\Big( \sum_{\ell=1}^i
\varepsilon^{\gamma} \bigl(\overline{w}^{\ell-1}, g\bigr) \Delta_\ell W \Big)^2\Big]
&=& \varepsilon^{2\gamma}\mathbb{E}\Big[k\sum_{\ell=1}^i
 \bigl(\overline{w}^{\ell-1}, g\bigr)^2 \Big]
\\
&\leq& C_{\mathcal{D}} \varepsilon^{2\gamma} \mathbb{E}\Big[k\sum_{\ell=1}^i
 \bigl\|\nabla w^{\ell-1}\|^2 \|g\|^2_{\mathbb{L}^\infty} \Big] \leq C \varepsilon^{2\gamma}.
\end{eqnarray*}
Therefore, (\ref{term_bdg}) can be estimated using the discrete BDG-inequality (see Lemma~\ref{lembdg}) and part~{i)} by
\begin{eqnarray*}
\leq
\D C\varepsilon^{\gamma}\|g\|_{\mathbb{L}^\infty} \mathbb{E}\Big[k\sum_{\ell=1}^J
 \bigl\|\overline{w}^{\ell-1}\|^2\Big]^{1/2}
\leq \D C \varepsilon^{\gamma} \mathbb{E}\Big[\sum_{\ell=1}^J  k\bigl\|\nabla w^{\ell-1}\|^2 \Big]^{1/2} \leq C\eps^{\gamma}\, .
\end{eqnarray*}

{\bf iii)} We show assertion iii) for $p=2^1$. By collecting \rev{the estimates of the terms in (\ref{lc3}) in part~i)
(cf. (\ref{est_a1}), \ref{est_a3}))} we deduce from {(\ref{est_step1})} that
\begin{eqnarray}\nonumber
&&\mathcal{E}(X^j) - \mathcal{E}(X^{j-1}) + \frac{\varepsilon}{4}
\Vert \nabla [X^{j} - X^{j-1}]\Vert^2 + \frac{1}{4\varepsilon}
\Vert \mathfrak{f}(X^j) -  \mathfrak{f}(X^{j-1})\Vert^2 + \frac{k}{2} \Vert \nabla w^j\Vert^2 \\ \label{ineq1}
&&\quad \leq C \Bigl( \varepsilon {\mathcal E}(X^{j-1}) + 1\Bigr) \vert \Delta_j W\vert^2 +  
C \varepsilon^{4\gamma-1}
\vert \Delta_j W\vert^4 + \rev{C (\varepsilon^{2\gamma+1} + \varepsilon^{2\gamma-3})\vert \Delta_j W\vert^2}  \\ \nonumber
&& \qquad + \varepsilon^{\gamma+1} (\nabla X^{j-1}, \nabla g) \Delta_j W +
\varepsilon^{\gamma-1} \bigl( f(X^{j-1}), g\bigr)\Delta_j W\, .
\end{eqnarray}
Multiply this inequality with $\mathcal{E}(X^j)$  and use the identity $(a-b)a = \frac{1}{2} [ a^2  - b^2 + (a-b)^2]$, \rev{the estimate $\varepsilon^{2\gamma+1} \leq \eps_0^{4}\varepsilon^{2\gamma-3}$}, 
Young's inequality, and the generalized H\"older's inequality to conclude\rev{
\begin{eqnarray}\nonumber
&&\frac{1}{2} \Bigl[  \vert {\mathcal E}(X^j)\vert^2 - \vert {\mathcal E}(X^{j-1})\vert^2
+ \vert {\mathcal E}(X^j) - {\mathcal E}(X^{j-1})\vert^2\Bigr] 
{+ \frac{\varepsilon}{4}
\Vert \nabla [X^{j} - X^{j-1}]\Vert^2 {\mathcal E}(X^j)}
\\ \nonumber
&&\quad \leq C \Bigl( \varepsilon \vert{\mathcal E}(X^{j-1})\vert^2 + {\mathcal E}(X^{j-1})\Bigr) \vert \Delta_j W\vert^2 
+  C \varepsilon^{2\gamma-3} {\mathcal E}(X^{j-1}) \vert \Delta_j W\vert^2
\\ \label{hmom1}
&&\qquad  
+  C\Bigl(\varepsilon^2\vert{\mathcal E}(X^{j-1})\vert^2 + 1 + \varepsilon^{4\gamma-1}{\mathcal E}(X^{j-1}) + \varepsilon^{2(2\gamma-3)} \Bigr) \vert \Delta_j W\vert^4    
+ C \varepsilon^{2(4\gamma-1)} \vert \Delta_j W\vert^8 
\\
\nonumber && \qquad
+ \frac{1}{4} \bigl\vert {\mathcal E}(X^j) -  {\mathcal E}(X^{j-1})\bigr\vert^2\\ \nonumber
&&\qquad + \Bigl[\varepsilon^{\gamma+1} (\nabla X^{j-1}, \nabla g) \Delta_j W +
\varepsilon^{\gamma-1} \bigl( f(X^{j-1}), g\bigr)\Delta_j W\Bigr] {\mathcal E}(X^{j-1}) 
\\
\nonumber
&&\qquad + C \max \bigl\{ \Vert \nabla g\Vert^2, \Vert g\Vert^2_{{\mathbb L}^{\infty}}
\bigr\}\Bigl[\varepsilon^{2(\gamma+1)} \Vert \nabla X^{j-1}\Vert^2  + \varepsilon^{2(\gamma-1)} \Vert \mathfrak{f}(X^{j-1})\Vert^2
\Vert X^{j-1}\Vert^2 \Bigr] \vert \Delta_j W\vert^2\, .
\end{eqnarray}
We note that to get the above estimate we employed 
the reformulation ${\mathcal E}(X^{j}) = {\mathcal E}(X^{j-1}) + ({\mathcal E}(X^{j})-{\mathcal E}(X^{j-1}))$ on the right-hand side}.

By Poincar\'e's inequality, the last term in (\ref{hmom1}) may be bounded as
\begin{equation*}\label{hmom1a}
\varepsilon^{2(\gamma-1)} \Bigl[ \eps^4 \Vert \nabla X^{j-1}\Vert^2  + \Vert \mathfrak{f}(X^{j-1})\Vert^2
\Vert X^{j-1}\Vert^2 \Bigr] \vert \Delta_j W\vert^2 
\leq C \varepsilon^{2(\gamma-1)} \Bigl[\rev{\varepsilon^{3}} {\mathcal E}\bigl(X^{j-1}\bigr)  +  \bigl\vert{\mathcal E}(X^{j-1})\bigr\vert^2 \Bigr] \vert \Delta_j W\vert^2\, .
\end{equation*}

After summing-up in (\ref{hmom1}) and  taking expectations
we get for any $j\leq J$ that
\begin{equation}\label{hmom3}
\begin{array}{llll}
\D \frac{1}{2}\mathbb{E}\big[\mathcal{E}(X^j)^2\big] 
+ \frac{1}{4}\sum_{i=1}^j\mathbb{E}\big[\big|\mathcal{E}(X^i) - \mathcal{E}(X^{i-1})\big|^2\big]
\\
\D\quad \leq \D 
\frac{1}{2}\mathbb{E}\big[\mathcal{E}(X^{0})^2\big] + C t_j + C \rev{(\eps^{2\gamma-3}+1+\eps^{4\gamma-1}k)} k\sum_{i=0}^{j-1} \mathbb{E}\big[\mathcal{E}(X^{i})] 
\\
\quad\quad
\D + C \rev{(\eps^{2(\gamma-1)}+\eps+ \eps^2k)}  k\sum_{i=0}^{j-1}\mathbb{E}\big[\mathcal{E}(X^{i})^2\big],
\end{array}
\end{equation}
where the third term is bounded via (\ref{est_step2}) in part {\bf ii)}, and
the statement then follows from the discrete Gronwall inequality.



For $p = 2^r$, $r=2$, we may now argue correspondingly: we start with
(\ref{hmom1}), which we now multiply with $\vert{\mathcal E}(X^j)\vert^2$.
Assertion iii) now follows via induction with respect to~$r$.\\

{\bf iv)} 
The last estimate follows \revised{analogously} to ii) from the BDG-inequality and iii).
\qed
\end{proof}

\medskip
{The error analysis of the implicit Scheme \ref{scheme_time} in the subsequent
Section \ref{sec_err} involves the use of a stopping index $J_\eps$, and
an associated random variable $\mathbbm{1}_{\{j \leq J_\eps\}}$ that is
{ measurable} w.r.t.~the $\sigma$-algebra $\mathcal{F}_{t_j}$, but not w.r.t.~$\mathcal{F}_{t_{j-1}}$.
This issue prohibits the use of the standard BDG-inequality since  
{$\mathbbm{1}_{\{j \leq J_\eps\}}$} is not independent of the Wiener increment $\Delta_j W$. The following lemma contains
a discrete BDG-inequality which will be used in Section \ref{sec_err}.
We take $\{ {\mathcal F}_{t_j}\}_{j=0}^J$ to be a discrete filtration associated with the
time mesh $\{ t_j\}_{j=0}^J \subset [0,T]$ on $(\Omega, {\mathcal F}, {\mathbb P})$.}
\begin{lem}\label{lembdg}
For every $j=1,\dots, J$, 
let $F_{j}$ be an $\mathcal{F}_{t_j}$-measurable random variable, and {$\Delta_jW$ be independent of $F_{j-1}$}.
Assume that the $\{{\mathcal F}_{t_j}\}_j$-martingale
$G_{\mfrakj } := \sum_{j=1}^{{\mfrakj }}F_{j-1}\Delta_j W$ ($1 \leq {\mfrakj } \leq J$), with $G_0=0$
be square-integrable.
Then for any stopping index $\tau: \Omega \rightarrow {\mathbb N}_0$ such that $\mathbbm{1}_{\{j\leq \tau\}}$
is $\mathcal{F}_{t_j}$-measurable,
it holds that
\begin{equation*}\label{bdgdin*}
\mathbb{E}\Big[\max_{{\mfrakj }=1,\dots, {\tau \wedge J}}\big\vert \sum_{j=1}^{{\mfrakj }}F_{j-1}\Delta_j W\big\vert^2\Big]
\leq 4\mathbb{E}\Big[\sum_{j=1}^{({\tau }+1)\wedge J}kF_{j-1}^2\Big]\, ,
\end{equation*}
\revised{where $\tau \wedge J = \min\{\tau,J\}$.}
\end{lem}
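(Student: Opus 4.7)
The plan is to reduce this to the classical Doob/BDG $L^2$-maximal inequality by absorbing the mismatch in measurability into a shifted stopping time. The hypothesis that $\mathbbm{1}_{\{j\leq \tau\}}$ is only $\mathcal{F}_{t_j}$-measurable (rather than $\mathcal{F}_{t_{j-1}}$-measurable as for a true stopping time) means that the naive stopped process $\sum_{j=1}^{\ell \wedge \tau} F_{j-1}\Delta_j W$ need not be a martingale, which is precisely what blocks a direct application of BDG. My key observation is that $\widetilde{\tau}:=\tau+1$ \emph{is} an $\{\mathcal{F}_{t_j}\}$-stopping time: indeed, $\{\widetilde{\tau}\leq j\} = \{\tau \leq j-1\} = \{j\leq\tau\}^c \in \mathcal{F}_{t_j}$ by assumption on $\tau$. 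Equivalently, $\mathbbm{1}_{\{j\leq \widetilde{\tau}\}} = \mathbbm{1}_{\{j-1 \leq \tau\}}$ is $\mathcal{F}_{t_{j-1}}$-measurable, i.e.\ predictable.

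Having made this reduction, the main steps proceed as follows. First, define the stopped sum
\begin{equation*}
M_\ell := \sum_{j=1}^{\ell} \mathbbm{1}_{\{j \leq \widetilde{\tau}\wedge J\}} F_{j-1} \Delta_j W, \qquad \ell = 0,1,\dots,J.
\end{equation*}
Because each factor $\mathbbm{1}_{\{j\leq \widetilde{\tau}\wedge J\}} F_{j-1}$ is $\mathcal{F}_{t_{j-1}}$-measurable and $\Delta_j W$ is independent of $\mathcal{F}_{t_{j-1}}$ with mean zero and variance $k$, the process $\{M_\ell\}_{\ell=0}^J$ is a square-integrable $\{\mathcal{F}_{t_\ell}\}$-martingale with orthogonal increments. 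Second, I compute its terminal second moment by discrete It\^o isometry,
\begin{equation*}
\mathbb{E}\bigl[|M_J|^2\bigr] = \sum_{j=1}^J \mathbb{E}\bigl[ \mathbbm{1}_{\{j\leq \widetilde{\tau}\wedge J\}} k F_{j-1}^2\bigr] = \mathbb{E}\Big[\sum_{j=1}^{(\tau+1)\wedge J} k F_{j-1}^2\Big].
\end{equation*}

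Third, I apply Doob's $L^2$-maximal inequality to $\{M_\ell\}$ to obtain
\begin{equation*}
\mathbb{E}\Big[\max_{0\leq \ell \leq J} |M_\ell|^2\Big] \leq 4\, \mathbb{E}\bigl[|M_J|^2\bigr] = 4\, \mathbb{E}\Big[\sum_{j=1}^{(\tau+1)\wedge J} k F_{j-1}^2\Big].
\end{equation*}
Finally, since $\tau \leq \widetilde{\tau}$ pathwise, the monotonicity of the maximum yields
\begin{equation*}
\max_{\ell=1,\dots,\tau\wedge J}\Big|\sum_{j=1}^{\ell} F_{j-1}\Delta_j W\Big| \;\leq\; \max_{\ell=1,\dots,\widetilde{\tau}\wedge J}\Big|\sum_{j=1}^{\ell} F_{j-1}\Delta_j W\Big| \;=\; \max_{0\leq \ell \leq J} |M_\ell|,
\end{equation*}
and squaring and taking expectation gives the claimed bound.

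The only conceptual hurdle is step one: recognizing that one should pass from $\tau$ to $\widetilde{\tau}=\tau+1$ so as to turn the weakly adapted indicator into a predictable one; the price for this shift is exactly the appearance of $(\tau+1)\wedge J$ (rather than $\tau\wedge J$) in the right-hand side, which matches the statement. Everything thereafter is a standard Doob/It\^o-isometry calculation for discrete martingales with independent Wiener increments.
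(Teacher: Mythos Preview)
Your proof is correct and follows essentially the same approach as the paper: both shift from $\tau$ to $\tau+1$ so that the indicator $\mathbbm{1}_{\{j-1\leq\tau\}}$ becomes $\mathcal{F}_{t_{j-1}}$-measurable, then apply Doob's $L^2$-maximal inequality to the resulting predictably-stopped martingale and compute the terminal second moment via independence. Your framing in terms of $\widetilde{\tau}=\tau+1$ being a genuine stopping time is slightly more conceptual than the paper's direct manipulation of the indicators, but the mechanics are identical.
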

{\begin{proof}
We start by noting that
\begin{equation*}\label{jeps1}
\sum_{j=1}^{(\tau+1)\wedge \mfrakj } F_{j-1} \Delta_j W = \sum_{j=1}^{\mfrakj } \mathbbm{1}_{\{j-1\leq \tau\}} F_{j-1} \Delta_j W \qquad (1 \leq \mfrakj \leq J)\,.
\end{equation*}
With this identity, we obtain
\begin{align}\label{bdgest1}
\mathbb{E}\Big[\max_{\mfrakj =1,\dots,\tau\wedge J}\big\vert\sum_{j=1}^{\mfrakj } F_{j-1} \Delta_j W\big\vert^2\Big]
& \leq
\mathbb{E}\Big[\max_{\mfrakj =1,\dots, (\tau+1)\wedge J}\big\vert\sum_{j=1}^{\mfrakj } F_{j-1} \Delta_j W\big\vert^2\Big]
\\ \nonumber
 & = 
\mathbb{E}\Big[\max_{\mfrakj =1,\dots, J}\big\vert\sum_{j=1}^{\mfrakj } \mathbbm{1}_{\{j-1\leq \tau\}} F_{j-1} \Delta_j W\big\vert^2\Big]\, .
\end{align}
The random variable $\mathbbm{1}_{\{j-1\leq \tau\}}$ is {$\mathcal{F}_{t_{j-1}}$-measurable},
therefore, 
${G}_\mfrakj  := \sum_{j=1}^{\mfrakj } \mathbbm{1}_{\{j-1\leq \tau\}} F_{j-1} \Delta_jW$ is also a discrete square-integrable martingale.
Hence, by the $L^2$-maximum martingale inequality,
using the independence of $\mathbbm{1}_{\{j\leq \tau\}} F_{j}$ and $\Delta_\mfrakj W$ for $j < \mfrakj $
it follows that
\begin{eqnarray}
&& \mathbb{E}\Big[\max_{\mfrakj =1,\dots,J}\big\vert\sum_{j=1}^{\mfrakj } \mathbbm{1}_{\{j-1\leq \tau\}} F_{j-1} \Delta_j W\big\vert^2\Big]
\leq 4 \mathbb{E}\Big[ \big\vert\sum_{j=1}^{J}\mathbbm{1}_{\{j-1\leq \tau\}} F_{j-1} \Delta_j W\big \vert^2\Big]
\nonumber
\\
\nonumber
&&\leq 4\mathbb{E} \Big[\sum_{j=1}^{J} (\mathbbm{1}_{\{j-1\leq \tau\}} F_{j-1})^2 |\Delta_j W|^2\Big]  + 
 8\sum_{i,j=1;i<j}^{J}  \mathbb{E} \big[\mathbbm{1}_{\{i-1\leq \tau\}} F_{i-1} \mathbbm{1}_{\{j-1\leq \tau\}} F_{j-1} \Delta_i W\big] {\mathbb E}\big[ \Delta_j W\big]
\\ \label{bdgest3}
&&\qquad  = 4\sum_{j=1}^{J} \mathbb{E} \Big[(\mathbbm{1}_{\{j-1\leq \tau\}} F_{j-1})^2\Big] \mathbb{E} \Big[ |\Delta_j W|^2\Big]  
= 4\mathbb{E}\Big[\sum_{j=1}^{(\tau+1)\wedge J} F_{j-1}^2 k\Big]\, .
\end{eqnarray}
The assertion of the lemma
then follows from (\ref{bdgest1}) and (\ref{bdgest3}).
\qed
\end{proof}
}

\subsection{Error analysis}\label{sec_err}


Denote $Z^j:=X^j-\XCH^j$,  use Scheme \ref{scheme_time} for a fixed $\omega \in \Omega$,   and choose $\varphi = \deltainv Z^{j}(\omega)$, $\psi = Z^j(\omega)$.
We obtain $\mathbb{P}$-a.s.
\begin{equation}\label{dim13}
\begin{split}
\frac{1}{2}&\Bigl(\|\Delta^{-1/2}Z^j\|^2- \|\Delta^{-1/2}Z^{j-1}\|^2+
\|\Delta^{-1/2}[Z^j-Z^{j-1}]\|^2\Bigr) + k\eps\|\nabla Z^j\|^2\\
&\quad  +\frac{k}{\eps}\bigl(f(X^j)-f(\XCH^j),Z^j\bigr)
=\eps^\gamma(\Delta^{-1/2}g,\Delta^{-1/2}Z^j)\Delta_jW\, .
\end{split}
\end{equation}


{We use Lemma~\ref{lem_xch}, v) to obtain a first error bound.}
\begin{lem}\label{thma2}
Assume $\gamma > \frac{3}{2}$, \revised{$\Vert u^\varepsilon_0\Vert_{{\mathbb H}^3} \leq C \varepsilon^{-{\mathfrak p}_{\tt CH}}$ for $\varepsilon \in (0,\varepsilon_0)$, 
and let $k \leq C \varepsilon^{{\mathfrak l}_{\tt CH}}$} with ${\mathfrak l}_{\tt CH} \geq 3$ from Lemma \ref{lem_xch} be sufficiently small.
%
%
There exists $C>0$,
such that
$\mathbb{P}$-a.s.~and for all  $1\leq {\mfrakj } \leq J$,
\begin{eqnarray}\nonumber
&&\max_{1\leq j\leq {\mfrakj }}\|\Delta^{-1/2}Z^j\|^2+{\eps^4 k}\sum_{j=1}^{{\mfrakj }}\|\nabla Z^j\|^2\\ \label{error_eqn} 
&&\quad \leq 
\frac{Ck}{\eps}\sum_{j=1}^{{\mfrakj }}\|Z^j\|_{\mathbb{L}^3}^3+
C\eps^\gamma\max_{1\leq j\leq {\mfrakj }}|\sum_{i=1}^{j}(\deltainv g,Z^{i-1})\Delta_i W|
+C\eps^{2\gamma} \sum_{j=1}^{{\mfrakj }}|\Delta_j W|^2\, .
\end{eqnarray}
\end{lem}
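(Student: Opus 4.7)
The plan is to work pathwise from (\ref{dim13}) and exploit the discrete spectral estimate in Lemma \ref{lem_xch}, v) together with the $\mathbb{L}^\infty$-bound on $\XCH^j$ from Lemma \ref{lem_xch}, iii). First, using the cubic structure $f(u) = u^3 - u$ and $Z^j = X^j - \XCH^j$, I expand
\begin{equation*}
\bigl(f(X^j) - f(\XCH^j), Z^j\bigr) = \bigl(f'(\XCH^j) Z^j, Z^j\bigr) + 3\bigl(\XCH^j, (Z^j)^3\bigr) + \|Z^j\|_{\mathbb{L}^4}^4\, .
\end{equation*}
The quartic term is non-negative and is discarded, while the cross cubic term is controlled by $\|\XCH^j\|_{\mathbb{L}^\infty} \leq C$ to yield $\bigl|\frac{3k}{\eps}(\XCH^j, (Z^j)^3)\bigr| \leq \frac{Ck}{\eps}\|Z^j\|_{\mathbb{L}^3}^3$, which already matches the first term on the right-hand side of (\ref{error_eqn}).

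The core task is to handle $k\eps\|\nabla Z^j\|^2 + \frac{k}{\eps}(f'(\XCH^j) Z^j, Z^j)$ so that a positive multiple of $k\eps^4\|\nabla Z^j\|^2$ remains on the left. I would split the dissipation as $k\eps\|\nabla Z^j\|^2 = k\eps^4\|\nabla Z^j\|^2 + k(1-\eps^3)\eps\|\nabla Z^j\|^2$ and apply Lemma \ref{lem_xch}, v), multiplied by $k(1-\eps^3)$, to obtain
\begin{equation*}
k(1-\eps^3)\eps\|\nabla Z^j\|^2 + \frac{k(1-\eps^3)^2}{\eps}\bigl(f'(\XCH^j) Z^j, Z^j\bigr) \geq -k(1-\eps^3)^2 (C_0+1)\|\Delta^{-1/2} Z^j\|^2\, .
\end{equation*}
The residual piece $\frac{k[1-(1-\eps^3)^2]}{\eps}(f'(\XCH^j)Z^j, Z^j) = k\eps^2(2-\eps^3)(f'(\XCH^j)Z^j, Z^j)$ is then bounded from below using $|f'(\XCH^j)| \leq C$ (via Lemma \ref{lem_xch}, iii)), the interpolation $\|Z^j\|^2 \leq \|\Delta^{-1/2} Z^j\|\|\nabla Z^j\|$ (valid since $Z^j$ has zero mean), and Young's inequality; this costs at most $Ck\|\Delta^{-1/2} Z^j\|^2 + \tfrac{1}{4}k\eps^4\|\nabla Z^j\|^2$, and the gradient portion is absorbed into the reserved term to produce
\begin{equation*}
k\eps\|\nabla Z^j\|^2 + \tfrac{k}{\eps}\bigl(f'(\XCH^j) Z^j, Z^j\bigr) \geq \tfrac{3}{4}k\eps^4\|\nabla Z^j\|^2 - Ck\|\Delta^{-1/2} Z^j\|^2\, .
\end{equation*}

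For the stochastic term I split $\eps^\gamma(\deltainv g, Z^j)\Delta_j W = \eps^\gamma(\deltainv g, Z^{j-1})\Delta_j W + \eps^\gamma(\Delta^{-1/2} g, \Delta^{-1/2}[Z^j - Z^{j-1}])\Delta_j W$, and estimate the second summand via Cauchy-Schwarz and Young using $\|\Delta^{-1/2} g\| \leq C$, absorbing $\tfrac{1}{4}\|\Delta^{-1/2}[Z^j - Z^{j-1}]\|^2$ into the dissipation already present on the left of (\ref{dim13}) and leaving the $C\eps^{2\gamma}|\Delta_j W|^2$ remainder on the right. Summing the resulting pathwise inequality from $j=1$ to $\mfrakj$ (with $Z^0 = 0$) and taking the maximum over $1\leq \mfrakj \leq J$ leaves only a linear residual $Ck\sum_{j=1}^\mfrakj \|\Delta^{-1/2} Z^j\|^2$ on the right, which is eliminated by the discrete Gronwall inequality under $k \leq C\eps^{{\mathfrak l}_{\tt CH}}$. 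The main obstacle is the delicate coercivity bookkeeping in the second step: a naive application of Lemma \ref{lem_xch}, v) consumes all of $k\eps\|\nabla Z^j\|^2$ and leaves no positive multiple of $k\eps^4\|\nabla Z^j\|^2$ on the left, so the $(1-\eps^3)$-rescaling together with the $\mathbb{L}^\infty$-bound from Lemma \ref{lem_xch}, iii) are both indispensable.
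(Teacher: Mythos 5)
Your proposal is correct and follows essentially the same route as the paper's proof: starting from (\ref{dim13}), you use the same expansion of the nonlinearity with the cubic term controlled by the $\mathbb{L}^\infty$-bound of Lemma \ref{lem_xch}, iii), the same $(1-\eps^3)$-weighted application of the discrete spectral estimate of Lemma \ref{lem_xch}, v) with the residual $O(\eps^2)(f'(\XCH^j)Z^j,Z^j)$ term absorbed via the interpolation $\|Z^j\|^2\leq\|\Delta^{-1/2}Z^j\|\,\|\nabla Z^j\|$ and Young's inequality, the same splitting of the noise term with the increment absorbed into $\|\Delta^{-1/2}[Z^j-Z^{j-1}]\|^2$, and the same summation/maximum/implicit discrete Gronwall conclusion. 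The bookkeeping differs only cosmetically from the paper's identity (\ref{didi1a}); there is no gap.
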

\begin{proof}
%
{\bf 1.} Consider the last term on the left-hand side of (\ref{dim13}).
On recalling $Z^j=X^j-\XCH^j$,
by a property of $f$, see \rev{\cite[eq. (2.6)]{fp_ifb05}}, and Lemma~\ref{lem_xch},~iii), we get for some $C>0$
\begin{equation}\label{didi1}
\begin{split}
&\bigl(f(X^j)-f(\XCH^j),Z^j\bigr)=\bigl(f(\XCH^j)-f(X^j),\XCH^j-X^j\bigr)\\
&\qquad 
\geq 
\bigl(f'(\XCH^j)[\XCH^j-X^j],\XCH^j-X^j \bigr)-3\bigl(\XCH^j \vert \XCH^j-X^j \vert^2,\XCH^j-X^j\bigr)\\
&\qquad \geq (1-\eps^3)\bigl(f'(\XCH^j)Z^j,Z^j\bigr)-C\|Z^j\|_{\mathbb{L}^3}^3 + \eps^3\bigl(f'(\XCH^j)Z^j,Z^j\bigr)\, .
\end{split}
\end{equation}

{\bf 2.} In order to later keep a portion of $\|\nabla Z^j\|^2$ on the left-hand side of (\ref{dim13}) we use the identity
\begin{equation}\label{didi1a}
\begin{array}{lll}
\D \eps \|\nabla Z^j\|^2 + \frac{(1-\eps^3)}{\eps}\bigl(f'(\XCH^j)Z^j,Z^j\bigr)
\\
\qquad =\D
(1-\eps^3) \left(\eps \|\nabla Z^j\|^2 + \frac{(1-\eps^3)}{\eps}\bigl(f'(\XCH^j)Z^j,Z^j\bigr)\right)
 \\
\quad\qquad \D + \eps^3 \left(\eps \|\nabla Z^j\|^2 + \frac{(1-\eps^3)}{\eps}\bigl(f'(\XCH^j)Z^j,Z^j\bigr)\right)\,.
\end{array}
\end{equation}
We apply Lemma~\ref{lem_xch}, v) to get a lower bound for the first term on the 
right-hand side,
$$
\geq -(C_0+1)\|\Delta^{-1/2}Z^j\|^2_{\mathbb{L}^2}\,.
$$
On noting $\eps<1$, we estimate the remaining nonlinearities in (\ref{didi1a}) using Lemma~\ref{lem_xch}, iii),
$$
\eps^2 \bigl(f'(\XCH^j)Z^j,Z^j\bigr) \leq {C\varepsilon^2} \|\nabla Z^j\| \|\Delta^{-1/2}Z^j\|
\leq \frac{\eps^4}{4}\|\nabla Z^j\|^2 + C\|\Delta^{-1/2}Z^j\|^2.
$$

{\bf 3.} We insert the estimates from the steps {\bf 1.} and {\bf 2.} into \eqref{dim13}, and use the bound {
\begin{equation}\label{bdg-arg}
\eps^{\gamma} (\deltainv g, Z^j - Z^{j-1})\Delta_j W \leq
\frac{1}{4} \Vert \Delta^{-1/2} [Z^j - Z^{j-1}]\Vert^2 + \eps^{2\gamma} \vert \Delta_j W\vert^2 \Vert \Delta^{-1/2} g\Vert^2
\end{equation}} to validate
\begin{eqnarray*}\label{didi3}
&&\frac{1}{2}\Bigl(\|\Delta^{-1/2}Z^j\|^2-\|\Delta^{-1/2}Z^{j-1}\|^2+
 \frac{1}{2}\|\Delta^{-1/2}[Z^j-Z^{j-1}]\|^2 + \frac{\eps^4}{4} k \|\nabla Z^j\|^2\Bigr) \\
&&\quad \leq Ck\|\Delta^{-1/2}Z^j\|^2  +\frac{Ck}{\eps}\|Z^j\|_{\mathbb{L}^3}^3 
+\eps^\gamma(\Delta^{-1/2}g, \Delta^{-1/2} Z^{j-1})\Delta_jW
+ C \eps^{2\gamma}  \vert \Delta_j W\vert^2 \, .
\end{eqnarray*}
{\bf 4.} {We sum  the last inequality from
$j=1$ up to $j={\mfrakj }$, and consider $\max_{j\leq {\mfrakj }}$.
 On noting $Z^0=0$, we obtain ${\mathbb P}$-a.s.
 \begin{equation*}
A_{\mfrakj } \leq {C}{\mathcal R}_{\mfrakj } + Ck\sum_{i=1}^{\mfrakj } A_i\qquad 
(1 \leq  {\mfrakj }\leq J)\, ,
\end{equation*}
where
\begin{equation}\label{numm1}
\begin{split}
&A_{\mfrakj }  = \frac{1}{2} \max_{1\leq j\leq {\mfrakj }}\|\Delta^{-1/2}Z^j\|^2
{+ \frac{1}{2} \sum_{i=1}^{\mfrakj }\Vert \Delta^{-1/2}[Z^j - Z^{j-1}]\Vert^2} +\eps^4 k\sum_{i=1}^{{\mfrakj }}\|\nabla Z^i\|^2\, ,
\\
&{\mathcal R}_{\mfrakj }  = \frac{k}{\eps}\sum_{j=1}^{{\mfrakj }}\|Z^j\|_{\mathbb{L}^3}^3+  \eps^\gamma\max_{1\leq j\leq {\mfrakj }}|\sum_{i=1}^{j}(\deltainv g,Z^{i-1})\Delta_i W|
      + \eps^{2\gamma}\sum_{i=1}^{{\mfrakj }}|\Delta_iW|^2\, \revised{.}
\end{split}
\end{equation}
Hence, the implicit version of the discrete Gronwall  lemma implies for \rev{sufficiently small} $k \leq k_0({\mathcal D})$  that ${\mathbb P}$-a.s.
\begin{equation}\label{esti1}
A_{\mfrakj } \leq C \mathcal{R}_{\mfrakj }  \qquad \forall \, {\mfrakj } \leq J\, ,
\end{equation}
which concludes the proof.}
\qed
\end{proof} 

\medskip

In the deterministic setting (\revised{$g\equiv0$}),
an induction argument, along with an interpolation estimate for the
$\mathbb{L}^3$-norm is used to estimate the cubic error term on the right-hand side
of (\ref{error_eqn}); cf.~\cite{CHFP}.
In the stochastic setting, this induction argument {is not applicable} any more,
{which is why we separately bound errors in (\ref{error_eqn}) on two subsets $\Omega_2$ and $\Omega \setminus \Omega_2$. In the first step, \revised{we} study accumulated errors on $\Omega_2$ {\em locally} in time, and therefore mimic a related (time-continuous)
argument in \cite{abk18}. We introduce the 
%
stopping index  $1 \leq J_\eps\leq J$}
\begin{equation*}\label{exitind}
J_\eps:=\inf \bigl\{{1\leq j\leq J}:\,\, \frac{k}{\eps} \sum_{i=1}^{j} \|Z^{i}\|_{\mathbb{L}^3}^3 > \eps^{\sigma_0} \bigr\}\, ,
\end{equation*}
where the constant $\sigma_0>0$ will be specified later. {The purpose of the stopping index is to identify those $\omega \in \Omega$ where the cubic error term is small enough.}
In the sequel, we estimate the terms on the right-hand side of (\ref{error_eqn}),
putting ${\mfrakj } = J_{\varepsilon}$. Clearly, the  
part $\frac{k}{\varepsilon} \sum_{i=1}^{J_{\varepsilon}-1} \Vert Z^i \Vert_{{\mathbb L}^3}^3$
of ${\mathcal R}_{J_{\varepsilon}}$ in (\ref{numm1}) is bounded by $\varepsilon^{\sigma_0}$;  the {remaining part will be denoted by
$\widetilde{\mathcal R}_{J_{\varepsilon}} := {\mathcal R}_{J_{\varepsilon}} -
\frac{k}{\varepsilon} \sum_{i=1}^{J_{\varepsilon}-1} \Vert Z^i \Vert_{{\mathbb L}^3}^3$,} i.e.,
%
\begin{equation*}\label{defreps}
\widetilde{\mathcal{R}}_{J_\eps} = \eps^\gamma\max_{1\leq j\leq J_\eps}\bigl |\sum_{i=1}^{j}\bigl(\deltainv g,Z^{i-1}\bigr)\Delta_iW \bigr |+\eps^{2\gamma}\sum_{j=1}^{J_\eps}|\Delta W_j|^2
+ \frac{k}{\eps}\|Z^{J_\eps}\|_{\mathbb{L}^3}^3\,. 
\end{equation*}
{For $0< \kappa_0 < \sigma_0$, we gather those $\omega \in \Omega$ in the subset 
\begin{equation*}\label{omega2def}
\Omega_2 := \big{\{}\omega\in\Omega: \, \widetilde{\mathcal{R}}_{J_\eps}(\omega)  \leq {\eps}^{\kappa_0}
 \bigr\}
\end{equation*}
where the error terms in Lemma~\ref{thma2}
which cannot be controlled by the stopping index $J_\eps$
do not exceed the larger error threshold $\varepsilon^{\kappa_0}$.
The following lemma quantifies the possible error accumulation in time on $\Omega_2$ up to \revised{the} stopping index $J_\varepsilon$ in terms of $\sigma_0, \kappa_0 >0$, and illustrates the role of $k$ in this matter; it further
provides a lower bound for the measure of $\Omega_2$ correspondingly.}
%
%


\begin{lem}\label{lem_patherr}
Assume $\gamma > \frac{3}{2}$, $0< \kappa_0 < \sigma_0$, \revised{$\Vert u^\varepsilon_0\Vert_{{\mathbb H}^3} \leq C \varepsilon^{-{\mathfrak p}_{\tt CH}}$ for $\varepsilon \in (0,\varepsilon_0)$, 
and let} $k \leq C \varepsilon^{{\mathfrak l}_{\tt CH}}$  with ${\mathfrak l}_{\tt CH} \geq 3$ from Lemma \ref{lem_xch} be sufficiently small.
%
Then, there exists $C >0$ such that
\begin{eqnarray*}
{\rm i)}&& \max_{1\leq i\leq J_\eps}\|\Delta^{-1/2}Z^i\|^2+ {\eps^4} k\sum_{i=1}^{J_\eps}\|\nabla Z^i\|^2 \leq C \varepsilon^{\kappa_0} \qquad \mbox{on } \Omega_2\, , \\
{\rm ii)}&& {\mathbb E}\Bigl[ \mathbbm{1}_{\Omega_2} \Bigl(\max_{1\leq i\leq J_\eps}\|\Delta^{-1/2}Z^i\|^2+\frac{\eps^4}{2} k\sum_{i=1}^{J_\eps}\|\nabla Z^i\|^2 \Bigr)\Bigr]\leq
C\max \bigl\{ \frac{k^2}{\varepsilon^4},\varepsilon^{\gamma+\frac{\sigma_0+1}{3}},  \varepsilon^{\sigma_0}, \varepsilon^{2\gamma}\bigr\}\, .
\end{eqnarray*}
Moreover, $\mathbb{P}[\Omega_2] \geq 1-  \frac{C}{\varepsilon^{\kappa_0}} \max \bigl\{ \frac{k^2}{\varepsilon^4}, \varepsilon^{\gamma+\frac{\sigma_0+1}{3}}, \varepsilon^{\sigma_0},\varepsilon^{2\gamma}\bigr\}$.
\end{lem}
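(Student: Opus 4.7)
The proof breaks into three parts with the same underlying mechanism: both (i) and (ii) are consequences of the pathwise error recursion (\ref{error_eqn}) from Lemma \ref{thma2} applied at $\mfrakj = J_\eps$, while (iii) follows from (ii) via Chebyshev's inequality
$$\mathbb{P}[\Omega\setminus\Omega_2] \;=\; \mathbb{P}[\widetilde{\mathcal{R}}_{J_\eps} > \eps^{\kappa_0}] \;\leq\; \eps^{-\kappa_0}\,\mathbb{E}[\widetilde{\mathcal{R}}_{J_\eps}].$$
Part (i) is essentially a bookkeeping step: splitting the cubic term on the right-hand side of (\ref{error_eqn}) as $\frac{k}{\eps}\sum_{i=1}^{J_\eps - 1}\|Z^i\|_{\mathbb{L}^3}^3 + \frac{k}{\eps}\|Z^{J_\eps}\|_{\mathbb{L}^3}^3$, the first summand is bounded by $\eps^{\sigma_0}$ by the definition of $J_\eps$, while the remaining pieces of the right-hand side are exactly $\widetilde{\mathcal{R}}_{J_\eps}$, which is $\leq\eps^{\kappa_0}$ on $\Omega_2$. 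Since $\kappa_0 < \sigma_0$ implies $\eps^{\sigma_0}\leq \eps^{\kappa_0}$ for small $\eps$, the pointwise bound $\leq C\eps^{\kappa_0}$ follows.

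The core task is therefore to estimate $\mathbb{E}[\widetilde{\mathcal{R}}_{J_\eps}]$, which I would do term by term. The Wiener-sum contribution $\eps^{2\gamma}\,\mathbb{E}\bigl[\sum_{j=1}^{J_\eps}|\Delta_j W|^2\bigr]$ is controlled by $\eps^{2\gamma}T$ using $\mathbb{E}|\Delta_j W|^2 = k$ and $kJ\leq T$. For the stochastic-maximum term I would invoke the discrete BDG inequality of Lemma \ref{lembdg} with stopping index $J_\eps$; since $|(\deltainv g, Z^{i-1})| \leq C\|Z^{i-1}\|$ by Cauchy--Schwarz, this reduces the estimate to bounding $\mathbb{E}\bigl[\sum_{i=1}^{(J_\eps+1)\wedge J}k\|Z^{i-1}\|^2\bigr]^{1/2}$. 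The crucial trick here is the power-mean inequality $\sum_{\ell=1}^N a_\ell^2 \leq N^{1/3}\bigl(\sum_\ell a_\ell^3\bigr)^{2/3}$ applied to $a_\ell = \|Z^\ell\|\leq C\|Z^\ell\|_{\mathbb{L}^3}$ (using the bounded-domain embedding): combined with the stopping-index bound $\sum_{\ell\leq J_\eps - 1}\|Z^\ell\|_{\mathbb{L}^3}^3 \leq \eps^{\sigma_0+1}/k$ and $N\leq J = T/k$, this converts the $\mathbb{L}^2$-sum into an $\eps^{2(\sigma_0+1)/3}$-bound, so that after taking the square root and the $\eps^\gamma$ prefactor we obtain the target rate $\eps^{\gamma+(\sigma_0+1)/3}$. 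The two remaining indices $\ell \in \{J_\eps, J_\eps+1\}$ are absorbed using the energy estimates of Lemma \ref{lem_energy}.

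The $\eps^{\sigma_0}$ contribution appearing in (ii) comes from the cubic sum restricted to $i \leq J_\eps - 1$, which was already treated in the argument for (i). The remaining jump $\frac{k}{\eps}\mathbb{E}\|Z^{J_\eps}\|_{\mathbb{L}^3}^3$ I would first split as $\leq C\frac{k}{\eps}\mathbb{E}\|Z^{J_\eps} - Z^{J_\eps-1}\|_{\mathbb{L}^3}^3 + C\frac{k}{\eps}\mathbb{E}\|Z^{J_\eps-1}\|_{\mathbb{L}^3}^3$ via the triangle inequality cubed: the second term is again $\leq \eps^{\sigma_0}$ by the stopping definition, whereas the pure jump I would control using the 2D Gagliardo--Nirenberg inequality $\|v\|_{\mathbb{L}^3}^3 \leq C\|v\|^2\|\nabla v\|$ together with the one-step increment estimate (\ref{est_l2inc}) obtained in the proof of Lemma \ref{lem_energy}; tracing powers of $k$ and $\eps$ through this bound produces the $k^2/\eps^4$ factor. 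The main obstacle I anticipate lies precisely in this jump bookkeeping combined with the subtle measurability issue in the BDG step: the index $J_\eps$ is $\mathcal{F}_{t_{J_\eps}}$-measurable but not previsible, which is exactly the situation Lemma \ref{lembdg} was designed to handle, and one must verify the $\mathcal{F}_{t_{j-1}}$-measurability of $\mathbbm{1}_{\{j-1\leq J_\eps\}}$ when applying it, as well as carefully coordinate the splitting of the sum at the $(J_\eps+1)$-th term so that no spurious $\eps^{-1/2}$ factor survives.
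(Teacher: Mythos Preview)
Your overall strategy matches the paper's: invoke Lemma~\ref{thma2} at $\mfrakj=J_\eps$, split off the cubic sum up to $J_\eps-1$ via the stopping definition, control the stochastic maximum by the discrete BDG Lemma~\ref{lembdg} together with the power-mean/H\"older trick $\sum a_i^2\le N^{1/3}(\sum a_i^3)^{2/3}$, and deduce the probability bound from Markov. Part~(i) and the $\eps^{\sigma_0}$, $\eps^{2\gamma}$, $\eps^{\gamma+(\sigma_0+1)/3}$ contributions are handled exactly as in the paper.

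The divergence is in how you treat the two ``boundary'' terms at the stopping index, and here the paper uses a device you do not mention: \emph{absorption into the left-hand side of the error inequality}. For the jump $\frac{k}{\eps}\|Z^{J_\eps}\|_{\mathbb{L}^3}^3$ the paper does not split via the triangle inequality; instead it interpolates directly (see~(\ref{l3est})) as
\[
\frac{k}{\eps}\|Z^{J_\eps}\|_{\mathbb{L}^3}^3 \le \tfrac18\|\Delta^{-1/2}Z^{J_\eps}\|^2 + \frac{Ck^2}{\eps^2}\|\nabla Z^{J_\eps}\|^4,
\]
absorbs the first piece into the left of Lemma~\ref{thma2}, and bounds the expectation of the second via the higher-moment estimate Lemma~\ref{lem_energy}\,iv), which yields exactly $Ck^2/\eps^4$. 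Your increment splitting would instead require a one-step bound on $\|Z^{J_\eps}-Z^{J_\eps-1}\|$, but~(\ref{est_l2inc}) is an estimate for $X$-increments, not $Z$-increments, and still carries $\|\nabla[X^j-X^{j-1}]\|^2$ on its right; tracking this through at the random index $J_\eps$ does not obviously produce $k^2/\eps^4$. Likewise for the BDG tail at $i=J_\eps+1$: the paper rewrites $(\deltainv g,Z^{J_\eps})=(\nabla\deltainv g,\nabla\deltainv Z^{J_\eps})$, bounds by $C\|\Delta^{-1/2}Z^{J_\eps}\|$, applies Young, and again absorbs $\tfrac18\,\mathbb{E}\|\Delta^{-1/2}Z^{J_\eps}\|^2$ on the left. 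Your ``absorbed using the energy estimates'' would leave a residual of order $\sqrt{k}\,\eps^\gamma$, which is not in general dominated by $\max\{k^2/\eps^4,\eps^{2\gamma}\}$ under the stated hypotheses. The missing ingredient is therefore this absorption trick, used twice; once you have it, the rest of your outline goes through essentially verbatim.
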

{The proof uses the discrete BDG-inequality (Lemma~\ref{lembdg}), which is suitable
for the implicit Scheme \ref{scheme_time};
{we use the
higher-moment estimates} from Lemma \ref{lem_energy}, iii)
to bound the last term in $\widetilde{\mathcal R}_{J_{\varepsilon}}$.
%
%
%

\begin{proof}
{\bf 1.} Estimate i) follows directly from Lemma \ref{thma2}, using the definitions of $J_{\varepsilon}$ and $\Omega_2$.


{\bf 2.} Let $\Omega_2^{c} := \Omega \setminus \Omega_2$.
%
%
We use \revised{Markov's} inequality to estimate $\mathbb{P}[\Omega_{2}^c]\leq
{\frac{1}{\eps^{\kappa_0}}}\mathbb{E}[\widetilde{\mathcal{R}}_{J_\eps}]$.
We first estimate the last term in $\widetilde{\mathcal R}_{J_{\varepsilon}}$: interpolation of ${\mathbb L}^3$ between
${\mathbb L}^2$ and ${\mathbb H}^1$, then of ${\mathbb L}^2$ between ${\mathbb H}^{-1}$ and ${\mathbb H}^1$ (${\mathcal D} \subset {\mathbb R^2}$)
and the Young's inequality yield
\begin{equation}\label{l3est}
\frac{k}{\varepsilon} \Vert Z^{J_{\varepsilon}}\Vert^3_{{\mathbb L}^3} \leq 
\frac{Ck}{\varepsilon} \Vert Z^{J_{\varepsilon}}\Vert_{{\mathbb H}^{-1}} \Vert \nabla Z^{J_{\varepsilon}}\Vert^2_{{\mathbb L}^{2}} \leq \frac{1}{8} \Vert \Delta^{-1/2} Z^{J_{\varepsilon}}\Vert^2_{{\mathbb L}^{2}}
+ \frac{C k^2}{\varepsilon^2} \Vert \nabla Z^{J_{\varepsilon}}\Vert^4_{{\mathbb L}^2}\,.
\end{equation}
The leading term on the right-hand side is absorbed on the left-hand 
side of the inequality in Lemma \ref{thma2}, which is considered on the whole of $\Omega$;
the expectation of the last term (on the whole of $\Omega$) is bounded via Lemma \ref{lem_energy}, iv) by $\frac{Ck^2}{\varepsilon^4} \bigl( \vert {\mathcal E}(u_0^\varepsilon)\vert^2 +1\bigr)$.

For the first term in $\widetilde{\mathcal R}_{J_{\varepsilon}}$ we use 
the discrete BDG-inequality (Lemma \ref{lembdg})  to bound its expectation by 
$$ C \eps^\gamma\mathbb{E}\Big[\sum_{i=1}^{J_\eps+1}k \bigl(\deltainv g,Z^{i-1} \bigr)^2\Big]^{\frac{1}{2}} \, .$$
In order to benefit from the definition of $J_{\varepsilon}$ for its estimate, we split the leading summand,
\begin{eqnarray*}\nonumber
&=&
C \eps^\gamma\mathbb{E}\Big[\sum_{i=1}^{J_\eps}k \vert \bigl(\deltainv g,Z^{i-1} \bigr) \vert^2\Big]^{\frac{1}{2}} + C\sqrt{k} \varepsilon^{\gamma} {\mathbb E} \bigl[ \vert(\deltainv g,
Z^{J_{\varepsilon}})\vert^2 \bigr]^{\frac{1}{2}}
\\ \nonumber 
&\leq&  C\eps^\gamma \mathbb{E}\Bigl[k \bigl(\sum_{i=1}^{J_\eps-1}\|Z^{i}\|_{\mathbb{L}^3}^{3}\bigr)^{\frac{2}{3}} \bigl(\sum_{i\leq J}1^3 \bigr)^{\frac{1}{3}}\Bigr]^{\frac{1}{2}} 
+ \rev{C\sqrt{k} \varepsilon^{\gamma} {\mathbb E} \bigl[ \vert(\nabla \deltainv g, \nabla \deltainv Z^{J_{\varepsilon}})\vert^2 \bigr]^{\frac{1}{2}}}
\\ \nonumber
& \leq & C \varepsilon^{\gamma+\frac{\sigma_0+1}{3}} +
C \sqrt{k} \varepsilon^{\gamma} {\mathbb E}\bigl[\Vert \Delta^{-1/2} Z^{J_{\varepsilon}}\Vert^2\bigr]^{\frac{1}{2}} \\ \nonumber
&\leq& C \varepsilon^{\gamma+\frac{\sigma_0+1}{3}} + C k \varepsilon^{2\gamma} +
\frac{1}{8} {\mathbb E}\bigl[\Vert \Delta^{-1/2} Z^{J_{\varepsilon}}\Vert^2\bigr]\, .
\end{eqnarray*}
Putting things together leads to ${\mathbb E}[\frac{1}{2}A_{J_{\varepsilon}}] \leq 
C \bigl(\varepsilon^{\sigma_0} + \varepsilon^{\gamma+\frac{\sigma_0+1}{3}} +  \varepsilon^{2\gamma} + \frac{k^2}{\varepsilon^4}\bigr)$.
Revisiting 
%
%
(\ref{l3est}) again then yields from Lemma \ref{thma2}
\begin{equation}\label{est_rtilde}
{\mathbb E}[\widetilde{\mathcal R}_{J_{\varepsilon}}] \leq   C\max \bigl\{ \frac{k^2}{\varepsilon^4}, \varepsilon^{\gamma+\frac{\sigma_0+1}{3}}, \varepsilon^{\sigma_0},
\varepsilon^{2\gamma}\bigr\}\, .
\end{equation}
{\bf 3.} Consider the inequality in Lemma \ref{thma2} on $\Omega_2$. 
\rev{The estimate ii) then follows after taking expectation, using (\ref{est_rtilde}) and recalling the definition of $J_\eps$.}

\qed
\end{proof}}

{The previous lemma establishes local error bounds for iterates of Scheme \ref{scheme_time} -- by using the 
stopping index $J_\varepsilon$, and the subset $\Omega_2 \subset \Omega$; the following lemma identifies values $(\gamma, \sigma_0, \kappa_0)$ such that
Lemma \ref{lem_patherr} remains valid globally in time on $\Omega_2$.}


{
\begin{lem}\label{lem_jeps}
 Let the assumptions in Lemma~\ref{lem_patherr} be valid. Assume
$$
\sigma_0 > 10\,, \qquad \rev{\sigma_0 >} \kappa_0 > \frac{2}{3}(\sigma_0 + 5)\, .
$$
There exists $\eps_0\equiv \eps_0(\sigma_0, \kappa_0)$, such that for every
$\eps \in (0,\eps_0)$
$$
J_\eps(\omega)=J \qquad \forall\, \omega \in \Omega_2\, .
$$
Moreover, $\lim_{\eps \downarrow 0}{\mathbb P}[\Omega_2] =1$ if
$$\gamma > \max\{{\frac{19}{3}}, \frac{\kappa_0}{2}\}\,, \qquad
k^2 \leq  C\varepsilon^{4+\kappa_0+\beta}\, ,
$$
where $\beta > 0$ may be arbitrarily small.
\end{lem}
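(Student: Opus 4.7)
The plan is to prove part~1 by contradiction, combining the pointwise bounds from Lemma~\ref{lem_patherr},~i) with the 2D interpolation $\|v\|_{\mathbb{L}^3}^3 \leq C \|v\|_{\mathbb{H}^{-1}} \|\nabla v\|^2$ already used in~(\ref{l3est}). Suppose, for some $\omega \in \Omega_2$, that $J_\varepsilon(\omega) < J$; by the very definition of the stopping index this gives
$$
\frac{k}{\varepsilon} \sum_{i=1}^{J_\varepsilon} \|Z^i\|_{\mathbb{L}^3}^3 > \varepsilon^{\sigma_0}\, .
$$
On the other hand, the Gagliardo--Nirenberg estimate combined with the equivalence $\|Z^i\|_{\mathbb{H}^{-1}} \sim \|\Delta^{-1/2}Z^i\|$ (legitimate since $Z^i$ is mean-zero) and with Lemma~\ref{lem_patherr},~i) yields
$$
\frac{k}{\varepsilon} \sum_{i=1}^{J_\varepsilon} \|Z^i\|_{\mathbb{L}^3}^3 \leq \frac{C}{\varepsilon} \Big(\max_{1 \leq i \leq J_\varepsilon} \|Z^i\|_{\mathbb{H}^{-1}}\Big) \cdot k \sum_{i=1}^{J_\varepsilon} \|\nabla Z^i\|^2 \leq \frac{C}{\varepsilon} \cdot \varepsilon^{\kappa_0/2} \cdot \varepsilon^{\kappa_0 - 4} = C \varepsilon^{\frac{3\kappa_0}{2} - 5}\, .
$$
The hypothesis $\kappa_0 > \tfrac{2}{3}(\sigma_0 + 5)$ gives $\tfrac{3\kappa_0}{2} - 5 > \sigma_0$ \emph{strictly}, so for $\varepsilon \leq \varepsilon_0(\sigma_0, \kappa_0)$ small enough the right-hand side is strictly below $\varepsilon^{\sigma_0}$, contradicting the first display. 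Hence $J_\varepsilon(\omega) = J$ for every $\omega \in \Omega_2$.

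For part~2, the final inequality of Lemma~\ref{lem_patherr} reads
$$
\mathbb{P}[\Omega_2] \geq 1 - C\varepsilon^{-\kappa_0}\max\Big\{\tfrac{k^2}{\varepsilon^4},\, \varepsilon^{\gamma + (\sigma_0+1)/3},\, \varepsilon^{\sigma_0},\, \varepsilon^{2\gamma}\Big\}\, .
$$
Each of the four contributions is forced to vanish under a separate condition: the first by the hypothesis $k^2 \leq C\varepsilon^{4+\kappa_0+\beta}$; the third by $\sigma_0 > \kappa_0$, already part of the standing assumptions; the fourth by $\gamma > \kappa_0/2$; and the second by $\gamma > \kappa_0 - (\sigma_0+1)/3$. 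On the admissible region $\{\sigma_0 > 10,\ \tfrac{2}{3}(\sigma_0+5) < \kappa_0 < \sigma_0\}$ the quantity $\kappa_0 - (\sigma_0+1)/3$ attains its infimum $\tfrac{19}{3}$ in the corner $\sigma_0 \downarrow 10$, $\kappa_0 \downarrow \tfrac{2}{3}(\sigma_0+5)$; hence, with $(\sigma_0,\kappa_0)$ chosen close to those endpoints, the combined constraint $\gamma > \max\{19/3,\, \kappa_0/2\}$ secures both missing conditions, and $\mathbb{P}[\Omega_2] \to 1$ as $\varepsilon \downarrow 0$ follows.

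The main obstacle is the simultaneous balancing of $(\gamma, \sigma_0, \kappa_0, k)$: part~1 forces $\kappa_0$ large enough so that the cubic error (bounded by $\varepsilon^{3\kappa_0/2 - 5}$) stays below the stopping threshold $\varepsilon^{\sigma_0}$, whereas part~2 forces each of the four error exponents to strictly exceed $\kappa_0$ so that the probability of the good set does not degenerate when divided by $\varepsilon^{-\kappa_0}$. The admissible window $\tfrac{2}{3}(\sigma_0+5) < \kappa_0 < \sigma_0$ together with $\sigma_0 > 10$ is exactly what renders these opposing requirements compatible, and the noise threshold $\gamma > 19/3$ is the arithmetic footprint of this compatibility on the noise scaling.
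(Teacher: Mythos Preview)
Your proof is correct and follows essentially the same approach as the paper's: both use the interpolation $\|Z^i\|_{\mathbb{L}^3}^3 \leq C\|Z^i\|_{\mathbb{H}^{-1}}\|\nabla Z^i\|^2$ together with Lemma~\ref{lem_patherr},~i) to reach the exponent $\tfrac{3\kappa_0}{2}-5$ for the cubic term, and then check positivity of each exponent in the probability bound. Your additional explanation of how the threshold $\tfrac{19}{3}$ emerges at the corner $(\sigma_0,\kappa_0)\to(10,10)$ of the admissible region is a useful clarification of what the paper states more tersely.
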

\revised{Compared} to assumption {\bf (A)}, the less restrictive lower bound for $\gamma$ is due to the use of the 
discrete spectral estimate (see Lemma \ref{lem_xch}, v)),
which introduces a factor $\varepsilon^{-4}$ that 
is absorbed into $\eps^{\frac{3}{2}\kappa_0}$ in the proof below.
Consequently we {only need to} require $\gamma \geq \frac{19}{3}$
in order to ensure positive probability of $\Omega_2$.}

\begin{proof}
%
{\bf 1.} Assume that $J_\eps < J$ on $\Omega_2$; we want to verify that
\begin{equation*}\label{assum-1}
\frac{k}{\eps}\sum_{i=1}^{J_\eps}\|Z^{i}\|_{\mathbb{L}^3}^3 \leq \eps^{\sigma_0} \qquad \mbox{on}\,\, \Omega_2\, .
\end{equation*}
Use (\ref{l3est}), and the estimate Lemma \ref{lem_patherr}~i) to conclude
\begin{equation*}\label{dim27}
\frac{k}{\eps}\sum_{i=1}^{J_\eps}\|Z^{i}\|_{\mathbb{L}^3}^3
\leq \frac{C}{\eps} \max_{1\leq i\leq J_\eps}\|\Delta^{-1/2} Z^i\|_{\mathbb{L}^{2}}\Big(\sum_{i=1}^{J_\eps}k\|\nabla Z^i\|^2\Big)
\leq C \varepsilon^{-1 + \frac{\kappa_0}{2} +(\kappa_0-4)}\quad \rev{\mathrm{on}\,\,\Omega_2} \, .
\end{equation*}
The right-hand side above is below $\varepsilon^{\sigma_0}$ for 
$\frac{3\kappa_0}{2} > \sigma_0 + 5$ \rev{and $\eps< \eps_0$ with sufficiently small $\eps_0\equiv \eps_0(\sigma_0, \kappa_0)$}. The additional condition $\kappa_0 < \sigma_0$ \rev{(which will be required in step {\bf 2.} below)} imposes that $\sigma_0 > 10$.\\
%
%
{\bf 2.} \rev{Recall that the last part in Lemma \ref{lem_patherr} yields 
$\mathbb{P}[\Omega_2] \geq 1-  C\varepsilon^{-\kappa_0} \max \bigl\{ \frac{k^2}{\varepsilon^4}, \varepsilon^{\gamma+\frac{\sigma_0+1}{3}}, \varepsilon^{\sigma_0},\varepsilon^{2\gamma}\bigr\}$.}
\rev{Hence, to ensure  $\mathbb{P}[\Omega_2] > 0$
requires $\gamma + \frac{\sigma_0+1}{3} -\kappa_0 >0$, $\sigma_0>\kappa_0$, $\gamma > \frac{\kappa_0}{2}$ and $k^2 \leq C \eps^{4+\kappa_0+\beta}$, $\beta>0$.}
In addition, by step {\bf 1.},  $\kappa_0 > \frac{2}{3}(\sigma_0 + {5})$, \rev{$\sigma_0>10$, which along with  $\gamma + \frac{\sigma_0+1}{3} -\kappa_0 >0$, $\sigma_0>\kappa_0$} 
implies $\gamma \geq {\frac{19}{3}}$. 
\qed
\end{proof}}

Next, we bound $\max_{1\leq i\leq J}\|\Delta^{-1/2}Z^i\|^2+\frac{\eps^4}{2} k\sum_{i=1}^{J}\|\nabla Z^i\|^2$ on the whole sample set. We collect the requirements on the analytical and numerical parameters:
\begin{itemize}
\item[{\bf (B)}] {Let  $u^\varepsilon_0 \in {\mathbb H}^3$, \revdim{$\mathcal{E}(u_0^\eps)<C$}.} Assume that $(\sigma_0, \kappa_0, \gamma)$ satisfy 
$$\sigma_0 >10\,, \qquad { \sigma_0} > \kappa_0 > \frac{2}{3}(\sigma_0 + 5)\,, \qquad \gamma \geq \max\{ {\frac{19}{3}}, \frac{\kappa_0}{2}\}\, .$$
For sufficiently small $\varepsilon_0 \equiv (\sigma_0,\kappa_0) >0$ and ${\mathfrak l}_{\tt CH} \geq 3$ from Lemma \ref{lem_xch}, 
and arbitrary $0 < \beta < \frac{1}{2}$, the time-step satisfies
$$
k \leq C \min \bigl\{\varepsilon^{{\mathfrak l}_{\tt CH}},
\varepsilon^{2+\frac{\kappa_0}{2}+\beta}\bigr\} \qquad \forall\, \varepsilon \in (0,\varepsilon_0) \,.$$
\end{itemize}
We note that, except for the higher regularity of the initial condition, the assumption {\bf (B)} is less restrictive than the assumption {\bf (A)}
from Section~\ref{sec_cont_ch}.
\revdim{Furthermore, the condition $\mathcal{E}(u_0^\eps) < C$ can be weakened to $\mathcal{E}(u_0^\eps) < C\eps^{-\alpha}$, $\alpha>0$, cf. \cite[Assumption (GA$_2$)]{fp_ifb05}.}

{\begin{lem}\label{cor_err_xxa}
Suppose {\bf (B)}.
Then there exists $C>0$ such that
$$
\mathbb{E}\big[\max_{1\leq j\leq J}\|Z^j\|^2_{\mathbb{H}^{-1}}  + {\eps^4} k\sum_{i=1}^{J}\|\nabla Z^i\|^2
\bigr] \leq \Bigl(\frac{C}{\varepsilon^{\kappa_0}} \max \bigl\{ \frac{k^2}{\varepsilon^4}, \varepsilon^{\gamma+\frac{\sigma_0+1}{3}}, \varepsilon^{\sigma_0},\varepsilon^{2\gamma}\bigr\}\Bigr)^{\frac{1}{2}}\,.$$
\end{lem}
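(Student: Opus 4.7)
The proof naturally splits according to $\Omega = \Omega_2 \cup \Omega_2^c$, where $\Omega_2^c := \Omega \setminus \Omega_2$. Under assumption {\bf (B)}, Lemma~\ref{lem_jeps} ensures $J_\eps(\omega) = J$ for every $\omega \in \Omega_2$, so the bound on $\Omega_2$ is obtained for free from Lemma~\ref{lem_patherr}, part ii):
\[
\mathbb{E}\bigl[\mathbbm{1}_{\Omega_2}\bigl(\max_{1\leq j\leq J}\|Z^j\|^2_{\mathbb{H}^{-1}} + \eps^4 k \sum_{i=1}^J \|\nabla Z^i\|^2\bigr)\bigr] \leq C\,M_\eps\,,
\]
where $M_\eps := \max\{k^2/\eps^4, \eps^{\gamma+(\sigma_0+1)/3}, \eps^{\sigma_0}, \eps^{2\gamma}\}$. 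Since {\bf (B)} forces $M_\eps \leq 1 \leq \eps^{-\kappa_0}$ for $\eps$ sufficiently small, this contribution is already majorized by the claimed right-hand side $(C\eps^{-\kappa_0} M_\eps)^{1/2}$.

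On $\Omega_2^c$ the natural move is a Cauchy-Schwarz split,
\[
\mathbb{E}\bigl[\mathbbm{1}_{\Omega_2^c}\bigl(\cdot\bigr)\bigr] \leq \mathbb{P}[\Omega_2^c]^{1/2}\,\bigl(\mathbb{E}[(\cdot)^2]\bigr)^{1/2}\,.
\]
The probability bound from Lemma~\ref{lem_patherr} directly yields $\mathbb{P}[\Omega_2^c]^{1/2} \leq (C\eps^{-\kappa_0}M_\eps)^{1/2}$, which is already the desired structure. It therefore remains to verify that the second factor is bounded \emph{uniformly} in $\eps$.

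The crux of the argument is precisely this uniform moment bound, and it must be carried out without losing a factor of $\eps^{-1}$. A naive Poincar\'e-type estimate $\|Z^j\|^2_{\mathbb{H}^{-1}} \leq C\|\nabla Z^j\|^2 \leq C\eps^{-1}(\mathcal{E}(X^j) + \mathcal{E}(X^j_{\tt CH}))$ would spoil the scaling; I would instead route through the $\mathbb{L}^4$-norm, exploiting the quartic part of the free energy. Starting from the algebraic identity $\|{\mathfrak f}(v)\|^2 = \|v\|^4_{\mathbb{L}^4} - 2\|v\|^2 + |\mathcal{D}|$ together with $\|{\mathfrak f}(v)\|^2 \leq 4\eps\,\mathcal{E}(v)$ and H\"older's inequality $\|v\|^2 \leq |\mathcal{D}|^{1/2}\|v\|^2_{\mathbb{L}^4}$, one derives $\|v\|^2_{\mathbb{L}^4} \leq C(1 + \sqrt{\eps\,\mathcal{E}(v)})$. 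Applying this to $X^j$ and $X^j_{\tt CH}$, using $\|Z^j\|^2_{\mathbb{H}^{-1}} \leq C\|Z^j\|^2 \leq C(\|X^j\|^2 + \|X^j_{\tt CH}\|^2)$, then squaring and taking the maximum gives
\[
\max_{1\leq j\leq J}\|Z^j\|^4_{\mathbb{H}^{-1}} \leq C\bigl(1 + \eps \max_{1\leq j\leq J}\mathcal{E}(X^j)^2 + \eps \max_{1\leq j\leq J}\mathcal{E}(X^j_{\tt CH})^2\bigr)\,,
\]
and the expectation is $\mathcal{O}(1)$ by Lemma~\ref{lem_energy}, iv) with $p=2$ and Lemma~\ref{lem_xch}, i).

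For the gradient contribution, the elementary bound $\eps^4 k\sum\|\nabla Z^i\|^2 \leq C\eps^3 T \max_j(\mathcal{E}(X^j) + \mathcal{E}(X^j_{\tt CH}))$ produces a further factor $\eps^3$, so squaring and taking expectations yields $\mathbb{E}[(\eps^4 k\sum\|\nabla Z^i\|^2)^2] \leq C\eps^6$. Combining the two moment bounds gives $\mathbb{E}[(\cdot)^2]^{1/2} \leq C$ independent of $\eps$, and plugging this into the Cauchy-Schwarz split yields $\mathbb{E}[\mathbbm{1}_{\Omega_2^c}(\cdot)] \leq (C\eps^{-\kappa_0}M_\eps)^{1/2}$. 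Adding the two contributions then concludes the proof. The main obstacle is this last moment bound: without the $\mathbb{L}^4$ detour one loses a factor $\eps^{-1}$ on $\Omega_2^c$ that the probability estimate for $\Omega_2^c$ cannot compensate for.
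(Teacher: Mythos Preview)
Your proof is correct and follows the same strategy as the paper: split $\Omega = \Omega_2 \cup \Omega_2^c$, use Lemma~\ref{lem_patherr}~ii) with Lemma~\ref{lem_jeps} on $\Omega_2$, and on $\Omega_2^c$ combine Cauchy--Schwarz with the probability bound from Lemma~\ref{lem_patherr} and a uniform second-moment bound obtained via the embedding $\mathbb{L}^4 \hookrightarrow \mathbb{H}^{-1}$ together with Lemma~\ref{lem_energy}~iv). Your explicit derivation of $\|v\|_{\mathbb{L}^4}^2 \leq C(1+\sqrt{\eps\,\mathcal{E}(v)})$ and the separate treatment of the gradient term are in fact more careful than the paper's rather terse ``$\mathbb{E}[A_J^2] \leq C\,\mathbb{E}[|\mathcal{E}(X^J)|^2]$,'' which glosses over the $\max_j$, the $X^j_{\tt CH}$ contribution, and the gradient sum; your closing remark about the $\mathbb{L}^4$ detour being essential to avoid an $\eps^{-1}$ loss is exactly the point.
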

}
{\begin{proof}
Recall the notation from (\ref{numm1}), and 
\rev{split $\mathbb{E}[A_{J}] = \mathbb{E}[\mathbbm{1}_{\Omega_2} A_J] + \mathbb{E}[\mathbbm{1}_{\Omega^c_2} A_J]$}. 
\rev{Due to assumption {\bf (B)} it follows directly from Lemma~\ref{lem_patherr},~ii) and Lemma~\ref{lem_jeps} that
\begin{equation}\label{erra1}
{\mathbb E}[\mathbbm{1}_{\Omega_2} A_J] \leq C\max \bigl\{ \frac{k^2}{\varepsilon^4},\varepsilon^{\gamma+\frac{\sigma_0+1}{3}},  \varepsilon^{\sigma_0}, \varepsilon^{2\gamma}\bigr\}\,.
\end{equation}

In order to bound ${\mathbb E}[\mathbbm{1}_{\Omega^c_2} A_J]$,
we use the embedding $\mathbb{L}^4\subset \mathbb{H}^{-1}$ which along with the higher-moment estimate from Lemma~\ref{lem_energy}~iv)  implies that
$$
\mathbb{E}\big[A_J^2\big] \leq C \mathbb{E}\big[|\mathcal{E}(X^\revdim{J})|^2\big] \leq C(|\mathcal{E}(u_\eps^0)|^2+1)\,.
$$
Next, we note that by Lemma~\ref{lem_patherr} it follows that
$$
{\mathbb P}[\Omega^c_2]\leq 1 - {\mathbb P}[\Omega_2]\leq \frac{C}{\varepsilon^{\kappa_0}} \max \bigl\{ \frac{k^2}{\varepsilon^4}, \varepsilon^{\gamma+\frac{\sigma_0+1}{3}}, \varepsilon^{\sigma_0},\varepsilon^{2\gamma}\bigr\}\,.
$$
Hence, using the Cauchy-Schwarz inequality we get
\begin{equation}\label{erra2}
{\mathbb E}[\mathbbm{1}_{\Omega^c_2} A_J] \leq 
\bigl({\mathbb P}[\Omega_2^c]\bigr)^{1/2}
\bigl({\mathbb E}[A_J^2]\bigr)^{1/2} \leq 
\Bigl(\frac{C}{\varepsilon^{\kappa_0}} \max \bigl\{ \frac{k^2}{\varepsilon^4}, \varepsilon^{\gamma+\frac{\sigma_0+1}{3}}, \varepsilon^{\sigma_0},\varepsilon^{2\gamma}\bigr\}\Bigr)^{\frac{1}{2}} \bigl( {\mathcal E}(u^\varepsilon_0)+1\bigr)\,.
\end{equation}
After inspecting (\ref{erra1}), (\ref{erra2}) we note that the statement follows 
by assumption {\bf (B)}, since the latter contribution dominates the error.
}
\qed
\end{proof}}}

The dominating error contribution in Lemma~\ref{cor_err_xxa} comes from the term ${\mathbb E}[\mathbbm{1}_{\Omega^c_2} A_J]$.
{
This is in contrast to Section~\ref{sec_cont_ch} where the error contribution from the set $\Omega_1^c$ can be made arbitrarily small, 
due to the additional parameter $\ellch>0$ in Lemma~\ref{lem_rest} which can be chosen arbitrarily large independently of the other parameters.
}

We are now ready to prove the first main result of this paper.
\begin{thm}\label{thmfin}
Let  $u_0^\eps\in \mathbb{H}^3$, \revised{let} $u$ be the strong solution of \eqref{p1}, 
and \revised{let $\left\{ X^j,\ j=1,\dots, J\right\}$ solve} Scheme \ref{scheme_time}. Suppose {\bf (A)}. 
Then there exists a constant $C>0$ such that for all $0 { < } \beta < \frac{1}{2}$
\begin{eqnarray*}\nonumber
&&\mathbb{E}\big[\max_{1\leq j\leq J}\|u(t_j)-X^j\|_{\mathbb{H}^{-1}}^2 \bigr]
\\
&&\qquad 
\leq C \max\Bigl\{{\eps^{\frac{2}{3}\sigma_0}}, \Bigl(\varepsilon^{-\kappa_0} \max \bigl\{ \frac{k^2}{\varepsilon^4}, \varepsilon^{\gamma+\frac{\sigma_0+1}{3}}, \varepsilon^{\sigma_0},\varepsilon^{2\gamma}\bigr\}\Bigr)^{\frac{1}{2}},
\frac{k^{2-\beta}}{\varepsilon^{{\mathfrak m}_{\tt CH}}}\Bigr\}\, .
\end{eqnarray*}
\end{thm}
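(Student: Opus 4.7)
The plan is to bound $u(t_j)-X^j$ in $\mathbb{H}^{-1}$ by a threefold triangle-inequality splitting through the deterministic Cahn-Hilliard solution $u_{\tt CH}$ and its numerical iterates $X^j_{\tt CH}$:
\begin{equation*}
u(t_j) - X^j = \bigl[u(t_j) - u_{\tt CH}(t_j)\bigr] + \bigl[u_{\tt CH}(t_j) - X^j_{\tt CH}\bigr] + \bigl[X^j_{\tt CH} - X^j\bigr] \,,
\end{equation*}
so that each of the three differences can be controlled by a previously established lemma. Squaring, using $(a+b+c)^2 \leq 3(a^2+b^2+c^2)$, taking $\max_{1\leq j\leq J}$ and then expectations reduces the theorem to bounding the three resulting contributions independently.

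For the first term, the stochastic--deterministic perturbation error, I would apply Lemma~\ref{RL2est} directly: under assumption \textbf{(A)} it yields
$\mathbb{E}\bigl[\|u - u_{\tt CH}\|_{L^\infty(0,T;\mathbb{H}^{-1})}^2\bigr] \leq C \varepsilon^{2\sigma_0/3}$, which is the first quantity in the maximum. For the second term, the deterministic numerical error, I would invoke Lemma~\ref{lem_xch}, iv) (noting that $u^\varepsilon_0\in \mathbb{H}^3$ and the time-step restriction in \textbf{(A)} implies $k \leq C\varepsilon^{{\mathfrak l}_{\tt CH}}$, which is what the lemma requires), giving
$\max_{1\leq j\leq J}\|u_{\tt CH}(t_j) - X^j_{\tt CH}\|_{\mathbb{H}^{-1}}^2 \leq C k^{2-\beta}/\varepsilon^{{\mathfrak m}_{\tt CH}}$ deterministically (and hence also in expectation). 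This gives the third quantity in the maximum.

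For the third term, the stochastic numerical error $Z^j := X^j - X^j_{\tt CH}$, I would appeal to Lemma~\ref{cor_err_xxa}; a small consistency check is that assumption \textbf{(A)} implies assumption \textbf{(B)}, since $\sigma_0>12>10$, $\kappa_0 > \tfrac{2}{3}\sigma_0+4 > \tfrac{2}{3}(\sigma_0+5)$ when $\sigma_0>6$, and $\gamma > \max\{23/3,\kappa_0/2\} \geq \max\{19/3,\kappa_0/2\}$, so all structural hypotheses transfer; the time-step restriction required by \textbf{(B)} should also be built into (or verified as implied by) \textbf{(A)} in the same way. This yields the middle quantity in the maximum.

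The bulk of the work is already done in the preceding lemmas, so the only genuine technical step is verifying that \textbf{(A)} implies \textbf{(B)} (plus the appropriate time-step hypothesis for Lemma~\ref{lem_xch}, iv)); the remainder is a mechanical combination via the triangle inequality and the elementary estimate $\max\{a_1+a_2+a_3\} \leq 3\max\{a_1,a_2,a_3\}$. The main subtlety, and what I would double-check carefully, is that the three error regimes are simultaneously compatible under the parameter restrictions of \textbf{(A)}, and that the dominating $\mathbb{E}[\mathbbm{1}_{\Omega_2^c}A_J]$ contribution from Lemma~\ref{cor_err_xxa} genuinely appears with the square-root exponent shown, rather than being eclipsed by one of the other two contributions for the chosen admissible range of $(\sigma_0,\kappa_0,\gamma)$.
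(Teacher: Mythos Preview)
Your proposal is correct and follows exactly the same three-term splitting as the paper's proof, invoking Lemma~\ref{RL2est}, Lemma~\ref{lem_xch}~iv), and Lemma~\ref{cor_err_xxa} for the respective contributions. Your explicit verification that \textbf{(A)} implies \textbf{(B)} and your flagging of the implicit time-step restriction are more careful than the paper's own (very terse) argument, but the strategy is identical.
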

Due to condition {\bf (A)}$_2$ it holds that $\sigma_0-\kappa_0 < \frac{1}{3}\sigma_0$. Consequently
the contribution $\eps^{\frac{2}{3}\sigma_0}$ in the error estimate
is dominated by $\eps^{\frac{\sigma_0-\kappa_0}{2}}$; it is only stated explicitly
to highlight the error contribution from the difference $u-u_{\tt CH}$ from Section~\ref{sec_cont_ch}.
\\
\begin{proof}
We estimate the error via splitting \revised{it} into three contributions,
\begin{equation*}
\max_{1\leq j\leq J}\|u(t_j)-u_{\tt CH}(t_j)\|_{\mathbb{H}^{-1}}^2 +
\max_{1\leq j\leq J}\|u_{\tt CH}(t_j) - X^j_{\tt CH}\|_{\mathbb{H}^{-1}}^2 +
\max_{1\leq j\leq J}\|X^j_{\tt CH} - X^j\|_{\mathbb{H}^{-1}}^2 =: I + II + III\, .
\end{equation*}
Lemma \ref{RL2est} bounds ${\mathbb E}[I]$, Lemma \ref{lem_xch}, iv) yields ${\mathbb E}[II] \leq \frac{k^{2-\beta}}{\varepsilon^{{\mathfrak m}_{\tt CH}}}$, and
${\mathbb E}[III]$ is bounded in Lemma \ref{cor_err_xxa}.
\qed
\end{proof}

{
\begin{rem}\label{rem_est_discuss}
An alternative approach to Theorem~\ref{thmfin}
would be to follow the arguments in \cite{pm18} for a related problem, 
which exploit a weak monotonicity property of the drift operator in (\ref{p1}), and stability of the discretization to obtain a strong error estimate for Scheme \ref{scheme_time} of the form
\begin{equation}\label{wemayf1}
\mathbb{E}\Big[\max_{1\leq j\leq J}\|u(t_j)-X^j\|_{\mathbb{H}^{-1}}^2\Big] \leq C_{\beta} \exp\bigl(\frac{T}{\eps} \bigr)k^{1-\beta} \qquad (\beta > 0)\, .
\end{equation}
While the error tends to zero for $k \downarrow 0$ in (\ref{wemayf1}), this estimate 
is only of limited practical relevancy in the asymptotic regime where $\varepsilon$ is small, since only prohibitively small step sizes
 $k \ll \exp(-\frac{1}{\varepsilon})$
are required in (\ref{wemayf1}) to guarantee small approximation errors for
iterates from Scheme \ref{scheme_time}. Moreover, the error analysis that leads to (\ref{wemayf1})
does not provide any insight on how to numerically resolve diffuse interfaces via
proper balancing of discretization parameter $k$ and interface width $\varepsilon$ --- which is relevant in the asymptotic regime where $\varepsilon \ll 1$.
%

\end{rem}
}

\section{Space-time Discretization of (\ref{p1})}\label{sec_space_time}
{
We generalize the convergence results in Section \ref{asec_time} 
 for Scheme \ref{scheme_time} to its space-time discretization. For this purpose, we introduce some further notations: let ${\mathcal T} _h$ be a quasi-uniform triangulation of ${\mathcal D}$, and ${\mathbb V}_h \subset {\mathbb H}^1$ be the finite element space of piecewise affine, globally continuous functions,
$${\mathbb V}_h := \bigl\{ v_h \in C(\overline{D});\, v_h \bigl\vert_K \in P_1(K) \quad \forall\, K \in {\mathcal T}_h\bigr\}\, ,$$
and $\mathring{{\mathbb V}}_h := \bigl\{ v_h \in {\mathbb V}_h:\, (v_h,1) = 0\bigr\}$. We recall the ${\mathbb L}^2$-projection $P_{{\mathbb L}^2}: {\mathbb L}^2 \rightarrow {\mathbb V}_h$, via
$$\bigl( P_{{\mathbb L}^2} v -v, \eta_h\bigr) = 0 \qquad \forall\, \eta_h \in {\mathbb V}_h\, , $$
and the Riesz projection $P_{{\mathbb H}^1}: {\mathbb H}^1 \cap {\mathbb L}^2_0 \rightarrow \mathring{\mathbb V}_h$, via
$$\bigl( \nabla [P_{{\mathbb H}^1} v -v], \nabla \eta_h\bigr) = 0 \qquad \forall\, \eta_h \in {\mathbb V}_h\, .$$
In what follows, we allow meshes ${\mathcal T}_h$ for which $P_{{\mathbb L}^2}$ is ${\mathbb H}^1$-stable; see \cite{C1}. Also, we define the inverse discrete  Laplacian $\deltainvh: {\mathbb L}^2_0 \rightarrow \mathring{\mathbb V}_h$  via
$$\bigl(\nabla \deltainvh v, \nabla \eta_h\bigr) = (v,\eta_h) \qquad \forall\, \eta_h \in {\mathbb V}_h\, . $$
We are ready to present the space discretization of Scheme \ref{scheme_time}.
\begin{scheme}\label{scheme_space_time} For every $1 \leq j \leq J$, find 
{a $[{\mathbb V}_h]^2$-valued r.v.~$(X_h^j, w_h^j)$}
 such that ${\mathbb P}$-a.s.
\begin{equation*}
\begin{split}
&(X_h^j-X_h^{j-1},\varphi_h)+k(\nabla w_h^{j},\nabla
\varphi_h)=\varepsilon^{\gamma}\bigl(g,\varphi_h
\bigr)\Delta_j W
\;\;\;\; \, \, \quad \, \forall\, \varphi_h \in {\mathbb V}_h\,,\\
&\varepsilon(\nabla X_h^j,\nabla \psi_h)+\frac{1}{\varepsilon}
\bigl(f(X_h^j),\psi_h \bigr)=(w^j_h,\psi_h) \qquad \qquad \quad
\quad \ \, \forall\, \psi_h \in
{\mathbb V}_h\, ,\\
&X^0_h = {P_{{\mathbb L}^2}u_0^\eps} \in \revised{\mathring{\mathbb V}_h}\, .
\end{split}
\end{equation*}
\end{scheme}
For all $1 \leq j \leq J$, the  solution $\{(X_h^j, w_h^j)\}_{1 \leq j \leq J}$ satisfies $(X_h^j,1) = 0$ ${\mathbb P}$-a.s.\\

{\bf Claim 1.} $\{(X_h^j, w_h^j)\}_{1 \leq j \leq J}$ inherits all stability bounds in Lemma \ref{lem_energy}.

\begin{proof} {i')} In order to verify the corresponding version of i) for $\{{\mathcal E}(X^j_h)\}_{1 \leq j\leq J}$, we may choose
$\varphi_h = w_h^j(\omega)$ and $\psi_h = [X^j_h - X^{j-1}_h](\omega)$ 
in Scheme \ref{scheme_space_time}, as in
part {\bf i)} of the proof of
Lemma \ref{lem_energy}. We then obtain a corresponding version of (\ref{test1}),
and (\ref{fest0}). 

The next argument in the proof of Lemma~\ref{lem_energy} that leads to
(\ref{est_l2inc}) may again be reproduced for Scheme \ref{scheme_space_time}  by choosing
$\varphi_h = \deltainvh [X^j_h - X^{j-1}_h](\omega)$, and using the definition of $\deltainvh$, as well as $X^j_h, P_{{\mathbb L}^2} g \in {\mathbb L}^2_0$ ${\mathbb P}$-a.s., such that
$$\Vert \nabla \deltainvh [X^j_h - X^{j-1}_h]\Vert^2 \leq
\Bigl( k \Vert \nabla w^j_h\Vert + \varepsilon^\gamma \Vert \nabla \deltainvh  P_{{\mathbb L}^2} g\Vert \vert \Delta_j W\vert\Bigr)\Vert \nabla \deltainvh [X^j_h - X^{j-1}_h]\Vert\, ,$$
since $\Vert \nabla \deltainvh P_{{\mathbb L}^2} g\Vert \leq \Vert g \Vert \leq C$.

To obtain the first identity in (\ref{lc3}) for Scheme \ref{scheme_space_time}, we
use
$\varepsilon^\gamma (g, w^j_h)\Delta_jW = \varepsilon^\gamma \bigl(P_{{\mathbb L}^2}g, w^j_h \bigr)\Delta_jW$, such that the second equation in Scheme \ref{scheme_space_time} with
$\psi_h =P_{{\mathbb L}^2}g$ may be applied; as a consequence, $g$
has to be replaced by $P_{{\mathbb L}^2}g$ in the rest of 
equality (\ref{lc3}). This modification leads to the term
$\Vert \nabla P_{{\mathbb L}^2}g\Vert$ in (\ref{est_a1}), which is again bounded
by $\Vert \nabla g\Vert$; the bound $\Vert P_{{\mathbb L}^2}g\Vert_{{\mathbb L}^{\infty}} \leq C$, which is
required to bound the term $A_{3,1}$ from (\ref{est_a3}), follows by an approximation result; cf.~\cite[Chapter 7]{brennerscott}. 
The above steps then yield the estimate (\ref{est_step2}) for $\{ (X^j_h, w^j_h)\}_{1 \leq j \leq J}$. \\

{ii'), iii'), iv')} We can follow the argumentation in the proof of Lemma \ref{lem_energy} without change.
\end{proof}

\medskip

{\bf Claim 2.} Lemma \ref{thma2} holds for $\{(X_h^j, w_h^j)\}_{1 \leq j \leq J}$, i.e.:
$Z^j_h := X^j_h - X^j_{{\tt CH};h}$ satisfies 
$\mathbb{P}$-a.s. 
\begin{equation*}\label{error_eqn_h}
\begin{split}
&\max_{1\leq j\leq {\mfrakj }}\|\nabla \deltainvh Z_h^j\|^2+c{\eps k}\sum_{i=1}^{{\mfrakj }}\|\nabla Z_h^i\|^2\\
 &\quad \leq 
\frac{Ck}{\eps}\sum_{i=1}^{{\mfrakj }}\|Z_h^i\|_{\mathbb{L}^3}^3+
C\eps^\gamma\max_{1\leq j\leq {\mfrakj }}|\sum_{i=1}^{j}(\deltainvh 
P_{{\mathbb L}^2} g,Z_h^{i-1})\Delta_j W|
+C\eps^{2\gamma} \sum_{i=1}^{{\mfrakj }}|\Delta_i W|^2 \,,
\end{split}
\end{equation*}
{for all ${\mfrakj } \leq J$},
provided that additionally
\begin{equation}\label{assu-01}
k\leq C\min\{\eps^{{\mathfrak p}_{\tt CH}}, h^{\widetilde{\mathfrak{q}}_{\tt CH}} \}\,, \qquad 
{h \leq C \min\{ 1, \rev{k^{2\beta}}\} \varepsilon^{\widetilde{\mathfrak  p}_{\tt CH}}}
\end{equation}
for any $\beta > 0$, and ${\mathfrak p}_{\tt CH}, \widetilde{\mathfrak{q}}_{\tt CH}, \widetilde{\mathfrak  p}_{\tt CH} >0$.
\rev{The exponents ${\mathfrak p}_{\tt CH}, \widetilde{\mathfrak{q}}_{\tt CH}, \widetilde{\mathfrak  p}_{\tt CH} >0$
are chosen in order to satisfy the assumptions of \cite[Corollary 2]{CHFP} and \cite[Theorem 3.2]{fp_ifb05}.}
\rev{In particular (\ref{assu-01}) 
is required to obtain the fully discrete counterpart of Lemma~\ref{lem_xch},~iii)-iv).}

{Requirement (\ref{assu-01})$_2$ comes from \cite[Corollary 2, assumption 4)]{CHFP}; see also \cite[Theorem 3.1, assumption 3)]{fp_ifb05} accordingly.  Since $\beta >0$ may be chosen arbitrarily small, it does not severely restrict admissible $h>0$.}

\begin{proof}
Again, we here denote by $\{(X_{{\tt CH};h}^j, w_{{\tt CH};h}^j)\}_{1 \leq j \leq J} \subset [{\mathbb V}_h]^2$ the solution of Scheme \ref{scheme_space_time} for $g \equiv 0$, whose stability and convergence properties are studied in 
\cite{CHFP,fp_ifb05}. {Under the assumption (\ref{assu-01})},  \cite[Theorem~3.2, (iii)]{fp_ifb05} provides the bound
\begin{equation*}
\max_{0 \leq j \leq J} \Vert X^j_{{\tt CH};h}\Vert_{{\mathbb L}^{\infty}} \leq C\, .
\end{equation*}
We use this bound to adapt estimate (\ref{didi1}) to the present setting and get
\begin{equation*}
\begin{split}
\bigl(f(X_h^j)-f(X_{{\tt CH};h}^{j}),Z^j_h\bigr) 
&\geq  \bigl(f'(X_{{\tt CH};h}^{j})Z^j_h,Z^j_h \bigr) \rev{- C\|Z^j_h\|_{\mathbb{L}^3}^3}\\
&\geq [1-{\eps^3}] \bigl(f'(X_{{\tt CH};h}^{j})Z^j_h,Z^j_h\bigr)-C\|Z^j_h\|_{\mathbb{L}^3}^3 + {\eps^3} \bigl(f'(X_{{\tt CH};h}^{j})Z^j_h,Z^j_h\bigr) \, .
\end{split}
\end{equation*}
Step {\bf 2.} of the proof of Lemma \ref{thma2} involves the discrete spectral estimate
(see Lemma \ref{lem_xch}, iv)) for $\{X^j_{\tt CH}\}_{j}$ to handle the leading term on the right-hand side of (\ref{didi1})
 -- which we do not have for $\{X^j_{{\tt CH};h}\}_j$ in the present setting. Therefore, we perturb the leading term on the right-hand side of the last inequality, and use
the {$\mathbb{L}^\infty$-bounds for $X^j_{{\tt CH}}$, $X^j_{{\tt CH};h}$},
as well as
the mean-value theorem to conclude 
\begin{equation*}
\begin{split}
\bigl(f'(X_{{\tt CH};h}^{j})Z^j_h,Z^j_h \bigr) &= \bigl(f'(X_{{\tt CH}}^{j})Z^j_h,Z^j_h \bigr) + \Bigl( \bigl[f'(X_{{\tt CH};h}^{j}) - f'(X_{{\tt CH}}^{j})\bigr]Z^j_h,Z^j_h \bigr) \Bigr) \\
&\geq \bigl(f'(X_{{\tt CH}}^{j})Z^j_h,Z^j_h \bigr) - C \Vert Z^j_h\Vert^3_{{\mathbb L}^3}\, .
\end{split}
\end{equation*}
The remaining steps in the proof of Lemma \ref{thma2} now follow with only minor adjustments. 
\end{proof} 

\medskip

{\bf Claim 3.} {Additionally assume (\ref{assu-01}). Then Lemma \ref{lem_patherr} holds for $\{ Z^j_h\}_j$}, i.e.,
\begin{eqnarray*}
{\rm i)}&& \max_{1\leq i\leq J_\eps}\|\nabla \deltainvh  Z_h^i\|^2+ {\eps^4} k\sum_{i=1}^{J_\eps}\|\nabla Z_h^i\|^2 \leq C \varepsilon^{\kappa_0} \qquad \mbox{on } \Omega_{2;h}\, , \\
{\rm ii)}&& {\mathbb E}\Bigl[ \mathbbm{1}_{\Omega_{2;h}} \Bigl(\max_{1\leq i\leq J_\eps}\|\nabla \deltainvh Z_h^i\|^2+\frac{\eps^4}{2} k\sum_{i=1}^{J_{\eps,h}}\|\nabla Z_h^i\|^2 \Bigr)\Bigr]\leq
C\max \bigl\{ \frac{k^2}{\varepsilon^4},\varepsilon^{\gamma+\frac{\sigma_0+1}{3}},  \varepsilon^{\sigma_0}, \varepsilon^{2\gamma}\bigr\}\, .
\end{eqnarray*}
Moreover, $\mathbb{P}[\Omega_{2;h}] \geq 1-  \frac{C}{\varepsilon^{\kappa_0}} \max \bigl\{ \frac{k^2}{\varepsilon^4}, \varepsilon^{\gamma+\frac{\sigma_0+1}{3}}, \varepsilon^{\sigma_0},\varepsilon^{2\gamma}\bigr\}$,
where ${\Omega}_{2;h} := \bigl\{\omega \in \Omega;\ \widetilde{\mathcal R}_{J_{\varepsilon,h};h}(\omega) \leq \eps^{\kappa_0}\bigr\}$, 
for $J_{\eps,h}  :=\inf\bigl\{1 \leq j \leq J:\, \frac{k}{\eps} \sum_{i=1}^{j} \|Z_h^{i}\|_{\mathbb{L}^3}^3 > \eps^{\sigma_0} \bigr\}$,  and
\begin{equation*}
\widetilde{\mathcal{R}}_{J_{\eps,h};h}:=\eps^\gamma\max_{1\leq j\leq J_{\eps,h}}\bigl |\sum_{i=1}^{j}\bigl(\deltainvh  P_{{\mathbb L}^2} g,Z_h^{i-1}\bigr)\Delta_iW \bigr |+\eps^{2\gamma}\sum_{j=1}^{J_{\eps,h}}|\Delta W_j|^2
+ \frac{k}{\eps}\|Z_h^{J_{\eps,h}}\|_{\mathbb{L}^3}^3\, . 
\end{equation*}
\begin{proof}
The proof for Lemma \ref{lem_patherr} directly transfers to the present setting. 
\end{proof}

\medskip

{\bf Claim 4.} Lemma \ref{lem_jeps} remains valid for $\{Z^j_h\}_h$ accordingly, {provided that $h \leq C \varepsilon^{\widetilde{\mathfrak  p}_{\tt CH}}$  and
$k \leq C h^{\widetilde{\mathfrak q}_{\tt CH}}$}, i.e.:
$J_{\eps,h} = J$ for all $\omega \in \Omega_{2;h}$. \\

\begin{proof}
We only need to adapt the interpolation argument for ${\mathbb L}^3$ to the present setting, starting with the estimate $\Vert Z^i_h\Vert_{{\mathbb L}^3}^3 \leq C \Vert Z^i_h\Vert_{{\mathbb H}^{-1}} \Vert \nabla Z_h^i\Vert^2$. By the definition of the ${\mathbb H}^{-1}$-norm, the definition and ${\mathbb H}^1$-stability of the ${{\mathbb L}^2}$-projection, and again the fact that $(Z^i_h,1) = 0$, we deduce
\begin{equation*}
\begin{split}
\Vert Z^i_h\Vert_{{\mathbb H}^{-1}} &= \sup_{\psi \in {\mathbb H}^1}
\frac{(Z^i_h, P_{{\mathbb L}^2} \psi)}{\Vert \psi\Vert_{{\mathbb H}^1}}
\leq C \sup_{\psi \in {\mathbb H}^1}
\frac{(Z^i_h, P_{{\mathbb L}^2} \psi)}{\Vert \nabla P_{{\mathbb L}^2} \psi\Vert} 
= C \sup_{\psi \in {\mathbb H}^1}
\frac{(\nabla (\deltainvh Z^i_h), \nabla P_{{\mathbb L}^2} \psi)}{\Vert \nabla P_{{\mathbb L}^2} \psi\Vert} 
\\
&\leq C \Vert \nabla \deltainvh Z^i_h\Vert\, .
\end{split}
\end{equation*}
\end{proof}

\medskip

\rev{
Next, we formulate a counterpart of Lemma~\ref{cor_err_xxa} for the fully discrete numerical solution;
as a consequence of the Claims 1 to 4 above the corollary can be proven analogically to Lemma~\ref{cor_err_xxa}
with the assumption {\bf (B)} complemented by the additional restriction on the discretization parameters (\ref{assu-01}).
\begin{cor}\label{cor_err_xxah}
Suppose {\bf (B)} and (\ref{assu-01}).
Then there exists $C>0$ such that
$$
\mathbb{E}\big[\max_{1\leq j\leq J}\|Z^j_h\|^2_{\mathbb{H}^{-1}}  + {\eps^4} k\sum_{i=1}^{J}\|\nabla Z^i_h\|^2
\bigr] \leq \Bigl(\frac{C}{\varepsilon^{\kappa_0}} \max \bigl\{ \frac{k^2}{\varepsilon^4}, \varepsilon^{\gamma+\frac{\sigma_0+1}{3}}, \varepsilon^{\sigma_0},\varepsilon^{2\gamma}\bigr\}\Bigr)^{\frac{1}{2}}\,.$$
\end{cor}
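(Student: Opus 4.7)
The plan is to mimic verbatim the proof of Lemma \ref{cor_err_xxa}, simply replacing the time-discrete objects $(Z^j, \deltainv, \Omega_2, J_\eps)$ by their space-time discrete counterparts $(Z_h^j, \deltainvh, \Omega_{2;h}, J_{\eps,h})$, and exploiting Claims~1--4 which play exactly the same role for the fully discrete scheme that Lemmas \ref{lem_energy}, \ref{thma2}, \ref{lem_patherr}, and \ref{lem_jeps} play at the time-discrete level. Set
\[
A_{J,h} := \tfrac{1}{2}\max_{1\leq j\leq J}\|\nabla\deltainvh Z_h^j\|^2 + \eps^4 k\sum_{i=1}^{J}\|\nabla Z_h^i\|^2,
\]
and split $\mathbb{E}[A_{J,h}] = \mathbb{E}[\mathbbm{1}_{\Omega_{2;h}} A_{J,h}] + \mathbb{E}[\mathbbm{1}_{\Omega_{2;h}^c} A_{J,h}]$.

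First I would control the contribution from $\Omega_{2;h}$. By Claim~4, assumption {\bf (B)} together with (\ref{assu-01}) forces $J_{\eps,h}(\omega)=J$ for every $\omega\in\Omega_{2;h}$; therefore the bound in Claim~3~ii) (where the sum/maximum is originally over $1\leq i\leq J_{\eps,h}$) is in fact a bound over the full time horizon $1\leq i\leq J$, which yields
\[
\mathbb{E}[\mathbbm{1}_{\Omega_{2;h}} A_{J,h}] \leq C\max\bigl\{\tfrac{k^2}{\eps^4},\eps^{\gamma+\frac{\sigma_0+1}{3}},\eps^{\sigma_0},\eps^{2\gamma}\bigr\}.
\]

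Next, for the contribution from $\Omega_{2;h}^c$, I would use the Cauchy--Schwarz inequality. The second moment $\mathbb{E}[A_{J,h}^2]$ is bounded by $C(|\mathcal{E}(u_0^\eps)|^2 + 1)$ via Claim~1 (the fully discrete analogue of Lemma \ref{lem_energy}~iv)), together with the embedding $\mathbb{L}^4\hookrightarrow \mathbb{H}^{-1}$ and the equivalence of $\|\nabla\deltainvh\cdot\|$ with $\|\cdot\|_{\mathbb{H}^{-1}}$ on mean-zero discrete functions established in the proof of Claim~4. The probability estimate from Claim~3 gives
\[
\mathbb{P}[\Omega_{2;h}^c] \leq \tfrac{C}{\eps^{\kappa_0}}\max\bigl\{\tfrac{k^2}{\eps^4},\eps^{\gamma+\frac{\sigma_0+1}{3}},\eps^{\sigma_0},\eps^{2\gamma}\bigr\},
\]
hence Cauchy--Schwarz produces a factor equal to the square root of the right-hand side in the corollary. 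Under {\bf (B)}, the quantity $\eps^{-\kappa_0}\max\{\cdots\}$ is smaller than~$1$ for $\eps\in(0,\eps_0)$, so this ``$\Omega_{2;h}^c$'' contribution dominates the one from $\Omega_{2;h}$. Converting the left-hand-side norm $\|\nabla\deltainvh Z_h^j\|$ into $\|Z_h^j\|_{\mathbb{H}^{-1}}$ via the duality computation from Claim~4 then completes the proof.

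No step constitutes a serious obstacle since Claims~1--4 are tailored for exactly this purpose; the only non-automatic point, and the one worth spelling out carefully, is the passage between $\|\nabla\deltainvh Z_h^j\|$ and $\|Z_h^j\|_{\mathbb{H}^{-1}}$, which requires the $\mathbb{H}^1$-stability of $P_{\mathbb{L}^2}$ (hence the mesh quasi-uniformity hypothesis on $\mathcal{T}_h$) and the mean-zero property $(Z_h^j,1)=0$ -- both already invoked earlier. The only technical care is to verify that the restriction (\ref{assu-01}) on $(h,k)$, when combined with {\bf (B)}, indeed makes the $\Omega_{2;h}^c$ contribution dominate and keeps the right-hand side finite for $\eps\in(0,\eps_0)$.
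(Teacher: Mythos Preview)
Your proposal is correct and follows essentially the same approach as the paper, which simply states that the corollary ``can be proven analogically to Lemma~\ref{cor_err_xxa}'' using Claims~1--4 in place of the time-discrete Lemmas~\ref{lem_energy}, \ref{thma2}, \ref{lem_patherr}, \ref{lem_jeps}. Your extra remark about converting $\|\nabla\deltainvh Z_h^j\|$ to $\|Z_h^j\|_{\mathbb{H}^{-1}}$ via the duality argument from Claim~4 (using the $\mathbb{H}^1$-stability of $P_{\mathbb{L}^2}$ and the zero-mean property) is a useful clarification that the paper leaves implicit.
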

}

We are now ready to extend Theorem \ref{thmfin} to Scheme \ref{scheme_space_time}.
\begin{thm}\label{thmfin_h}
Let $u$ be the strong solution of \eqref{p1}, and $\left\{ X^j_h;\, 1 \leq j \leq J\right\}$ the solution of Scheme \ref{scheme_space_time}.
{Assume \rev{{\bf (B)}}  and (\ref{assu-01})}.
Then there exists $C>0$ such that
\begin{equation}\nonumber
\begin{split}
& 
\mathbb{E}\Big[\max_{1\leq j\leq J}\|u(t_j)-X^j\|_{\mathbb{H}^{-1}}^2 
 \Big]
\\
 & \qquad 
\leq 
C \max\Bigl\{\Bigl(\varepsilon^{-\kappa_0} \max \bigl\{ \frac{k^2}{\varepsilon^4}, \varepsilon^{\gamma+\frac{\sigma_0+1}{3}}, \varepsilon^{\sigma_0},\varepsilon^{2\gamma}\bigr\}\Bigr)^{\frac{1}{2}},
\frac{k^{2-\beta}}{ \varepsilon^{{\mathfrak m}_{\tt CH}}} + \frac{h^4\rev{(1+k^{-\beta})}}{ \varepsilon^{\widetilde{\mathfrak m}_{\tt CH}}}
\Bigr\}\, ,
\end{split}
\end{equation}
where ${\mathfrak m}_{\tt CH}, \widetilde{\mathfrak m}_{\tt CH} > 0$. 
\end{thm}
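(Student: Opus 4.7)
The plan is to mirror the proof of Theorem~\ref{thmfin}, but with an extra intermediate quantity, namely the fully discrete deterministic iterate $X^j_{{\tt CH};h}$, inserted via the triangle inequality. Concretely, I would write
\begin{equation*}
\|u(t_j) - X^j_h\|_{\mathbb{H}^{-1}}^2 \leq 3\bigl( I + II_h + III_h\bigr),
\end{equation*}
where
\begin{equation*}
\begin{split}
I &:= \max_{1\leq j\leq J}\|u(t_j) - u_{\tt CH}(t_j)\|_{\mathbb{H}^{-1}}^2,\\
II_h &:= \max_{1\leq j\leq J}\|u_{\tt CH}(t_j) - X^j_{{\tt CH};h}\|_{\mathbb{H}^{-1}}^2,\\
III_h &:= \max_{1\leq j\leq J}\|X^j_{{\tt CH};h} - X^j_h\|_{\mathbb{H}^{-1}}^2.
\end{split}
\end{equation*}
Taking expectations, each of the three contributions will then be bounded by a separate result.

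First, $\mathbb{E}[I]$ is purely a continuous-level comparison between the stochastic and deterministic Cahn-Hilliard dynamics, and is controlled by Lemma~\ref{RL2est} under assumption \textbf{(B)} (which is more restrictive than \textbf{(A)} except for the regularity of $u_0^\varepsilon$), giving $\mathbb{E}[I] \leq C\varepsilon^{2\sigma_0/3}$; this contribution is dominated by the term $(\varepsilon^{-\kappa_0}\varepsilon^{\sigma_0})^{1/2}$ appearing in the final estimate, due to \textbf{(B)}$_2$, and hence need not be displayed. Second, $\mathbb{E}[III_h]$ is precisely the content of Corollary~\ref{cor_err_xxah}, whose hypotheses \textbf{(B)} and \eqref{assu-01} coincide with those of the present theorem, yielding the first term on the right-hand side of the claimed estimate. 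Third, $\mathbb{E}[II_h]$ is the fully discrete deterministic error bound, which is the direct fully discrete analogue of Lemma~\ref{lem_xch}, iv); this is established in \cite{CHFP,fp_ifb05} under assumption~\eqref{assu-01} and produces the contribution $\frac{k^{2-\beta}}{\varepsilon^{{\mathfrak m}_{\tt CH}}} + \frac{h^4(1+k^{-\beta})}{\varepsilon^{\widetilde{\mathfrak m}_{\tt CH}}}$ in the statement, with suitable exponents ${\mathfrak m}_{\tt CH}, \widetilde{\mathfrak m}_{\tt CH} > 0$.

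The only step requiring genuine work is confirming that all four Claims in Section~\ref{sec_space_time} indeed go through under the current hypotheses, since Corollary~\ref{cor_err_xxah} rests on them; but this has already been established in the paragraphs preceding the theorem. The main subtlety I anticipate is bookkeeping: the constraints in \eqref{assu-01} must be compatible with \textbf{(B)}, and in particular the requirement $h \leq C\min\{1,k^{2\beta}\}\varepsilon^{\widetilde{\mathfrak p}_{\tt CH}}$ together with $k \leq C\varepsilon^{\mathfrak{l}_{\tt CH}}$ has to be verified so that the deterministic fully discrete bound yields the $\beta$-exponent that appears in the statement. Combining the three bounds and taking the maximum of the resulting terms then gives the desired estimate, and the proof concludes with a \texttt{\textbackslash qed}.
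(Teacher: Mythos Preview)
Your proposal is correct and takes essentially the same approach as the paper: the same three-term triangle-inequality split through $u_{\tt CH}(t_j)$ and $X^j_{{\tt CH};h}$, with the three pieces bounded respectively by Lemma~\ref{RL2est}, the fully discrete deterministic estimate from \cite[Corollary~2]{CHFP}, and Corollary~\ref{cor_err_xxah}. One small slip: you write that \textbf{(B)} is ``more restrictive than \textbf{(A)}'', but the paper states the opposite (apart from the $\mathbb{H}^3$ regularity of $u_0^\varepsilon$, \textbf{(B)} is \emph{less} restrictive); the paper's own proof invokes Lemma~\ref{RL2est} under \textbf{(B)} in the same way, so your argument matches it regardless.
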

\rev{We note that the exponents ${\mathfrak m}_{\tt CH}, \widetilde{\mathfrak m}_{\tt CH} > 0$ in the above estimate 
can be determined on closer inspection of \cite[Corollary 2]{CHFP} on assuming (\ref{assu-01}).
Furthermore, assumption (\ref{assu-01}), which is a simplified reformulation of assumption 4) in \cite[Corollary 2]{CHFP},  guarantees that
$\lim_{\eps \downarrow 0}\left(\frac{k^{2-\beta}}{ \varepsilon^{{\mathfrak m}_{\tt CH}}} + \frac{{h^4 \rev{(1+k^{-\beta})}}}{ \varepsilon^{\widetilde{\mathfrak m}_{\tt CH}}}\right) = 0$.}

\begin{proof}
We split the error into three contributions,
\begin{eqnarray*}\nonumber
\mathbb{E}\big[\max_{1\leq j\leq J}\|u(t_j)-X^j_h\|_{\mathbb{H}^{-1}}^2\big]&\leq&
3\mathbb{E}\big[\max_{1\leq j\leq J}\|u(t_j)-u_{\tt CH}(t_j)\|_{\mathbb{H}^{-1}}^2\big]
\\ 
&& +3\max_{1\leq j\leq J}\|u_{\tt CH}(t_j)-X^j_{{\tt CH};h}\|_{\mathbb{H}^{-1}}^2
  +3\mathbb{E}\big[\max_{1\leq j\leq J}\|X^j_h-X^j_{{\tt CH};h}\|_{\mathbb{H}^{-1}}^2\big].
\end{eqnarray*}
%
The first term is bounded by $C\eps^{\frac{2}{3}\sigma_0}$ as in Theorem~\ref{thmfin}.
The second term is bounded by $C \bigl(\frac{k^{2-\beta}}{ \varepsilon^{{\mathfrak m}_{\tt CH}}} + \frac{{h^4}\rev{(1+k^{-\beta})}}{ \varepsilon^{\widetilde{\mathfrak m}_{\tt CH}}}\bigr)$ thanks to
\cite[Corollary 2]{CHFP} \rev{(stated here in a simplified form)}, {provided assumption (\ref{assu-01}) holds}. 
The last term is bounded by
$\Bigl(\frac{C}{\varepsilon^{\kappa_0}} \max \bigl\{ \frac{k^2}{\varepsilon^4}, \varepsilon^{\gamma+\frac{\sigma_0+1}{3}}, \varepsilon^{\sigma_0},\varepsilon^{2\gamma}\bigr\}\Bigr)^{\frac{1}{2}}$, 
\rev{by Corollary~\ref{cor_err_xxah}}.
\qed
\end{proof}
}}

\section{Sharp-interface limit}\label{sec_sharp}
 In this section, we show the convergence of iterates $\{X^j\}_{j=1}^J$ of Scheme~\ref{scheme_time} to the solution of a sharp interface problem.
{Recall that in the absence of noise, the sharp} interface limit of
\eqref{p1} is given by the following deterministic Hele-Shaw/Mullins-Sekerka problem:
Find $v_{\tt MS} : [0,T] \times \mathcal{D}  \to \mathbb R$ 
and the interface $\big\{\Gamma^{{\tt MS}}_t;\, 0 \leq t \leq T\big\}$ such that
for all $t\in (0,T]$ the following conditions hold:
\begin{subequations}\label{eq:MS}
\begin{alignat}{2}
- \Delta v_{\tt MS} & = 0 && \qquad \mbox{in }\ 
\mathcal D \setminus \Gamma^{\tt MS}_t\,,\label{eq:MSa} \\
\left[\partial_\hsnormal v_{\tt MS}\right]_{\Gamma^{\tt MS}_t} & 
= - 2\,\mathcal{V}
&& \qquad \mbox{on }\ \Gamma^{\tt MS}_t\,, \label{eq:MSb} \\
v_{\tt MS} & = \alpha\,\varkappa && \qquad \mbox{on }\ \Gamma^{\tt MS}_t\,,
\label{eq:MSc} \\
\partial_\normal v_{\tt MS} & = 0 &&
\qquad \mbox{on } \partial \mathcal D\,,
\label{eq:MSd}\\
\Gamma^{\tt MS}_0 & = \Gamma_{00} \,, \label{eq:MSe} 
\end{alignat}
\end{subequations}
where $\varkappa$ is the \revised{curvature} of the evolving interface $\Gamma^{\tt MS}_t$, and $\mathcal{V}$ is the velocity in the direction of its normal ${\hsnormal}$, 
as well as $[\frac{\partial v_{\tt MS}}{\partial {\hsnormal}}]_{\Gamma^{\tt MS}_t}({z}) = 
(\frac{\partial v_{{\tt MS},+}}{\partial { \hsnormal}} - \frac{\partial v_{{\tt MS},-}}
{\partial {\hsnormal}})({z})$ 
for all ${z}\in\Gamma^{\tt MS}_t$. 
The constant in (\ref{eq:MSc}) is chosen as 
$\alpha = \tfrac12\,c_F$, where
$\cPsi = \int_{-1}^1 \sqrt{2\,F(s)}\;{\rm d}s = 
\tfrac13\,2^\frac32$,
and $F$ is the double-well potential; cf.~\cite{abc} for a further discussion of the model.

Below, we show that iterates $\{X^j\}_{j=1}^J$ of Scheme~\ref{scheme_time} converge to the limiting Mullins-Sekerka problem~(\ref{eq:MS}); 
see Theorem~\ref{cor_sharp} for a precise specification of the convergence result.
{For this purpose, we need sharper stability and convergence results than those available from Section \ref{asec_time}, which also requires to tighten the assumptions {\bf (B)}, and so to further restrict admissible choices of $\gamma>0$.}
We note that the stronger stability estimates below are derived formally using the (analytically) strong formulation of Scheme~\ref{scheme_time};
the derivation can be justified rigorously by the respective elliptic regularity of the Laplace and the bi-Laplace operators.

\begin{lem}\label{lem_linftybnd}
Assume {\bf (B)}.
For every \revp{$2<p<3$}, there exists $C \equiv C(p)>0$ such that the
solution $\{X^j\}_{j=1}^J$ of Scheme
\ref{scheme_time} satisfies
\begin{equation*}\label{est_linfty}
{\mathbb E}\bigl[ \max_{1 \leq j \leq J} \Vert X^j\Vert^p_{{\mathbb L}^{\infty}}\bigr] \leq
\revp{C\varepsilon^{1-p} k^{\frac{2-p}{2}}\,.} 
\end{equation*}
\end{lem}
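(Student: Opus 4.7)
The plan is to combine a two-dimensional Gagliardo--Nirenberg/Agmon-type interpolation with the elliptic regularity inherent in the second equation of Scheme~\ref{scheme_time}, and then appeal to the higher-moment energy bounds of Lemma~\ref{lem_energy}. Concretely, I would start from an inequality valid on $\mathbb{R}^2$ of the form
\begin{equation*}
\|X^j\|^p_{\mathbb{L}^\infty} \leq C_p\, \|X^j\|^{a}_{\mathbb{L}^2}\, \|X^j\|^{p-a}_{\mathbb{H}^{s}}
\end{equation*}
for suitable $a\in[0,p]$ and $s>1$, obtained from the 2D Sobolev embedding $\mathbb{H}^s\hookrightarrow\mathbb{L}^\infty$ followed by a Sobolev-scale interpolation between $\mathbb{L}^2$ and $\mathbb{H}^2$. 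Since $\|X^j\|_{\mathbb{L}^2}$ is bounded uniformly in $\varepsilon$ in every moment by Lemma~\ref{lem_energy}, H\"older's inequality in $\omega$ reduces the task to bounding $\mathbb{E}\big[\max_{1\leq j\leq J}\|X^j\|^{q}_{\mathbb{H}^2}\big]$ for an appropriate exponent $q\leq p-1$.

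Next I would extract pathwise $\mathbb{H}^2$-control from the chemical-potential equation $-\varepsilon\Delta X^j=w^j-\varepsilon^{-1}f(X^j)$. Testing with $-\Delta X^j$ and using the pointwise lower bound $f'(u)=3u^2-1\geq -1$ yields
\begin{equation*}
\|X^j\|_{\mathbb{H}^2}\leq C\varepsilon^{-1}\bigl(\|\nabla X^j\|+\|w^j\|\bigr)\,.
\end{equation*}
Testing the same equation with the constant~$1$ gives $|\mathcal{D}|\,\overline{w^j}=\varepsilon^{-1}\int_{\mathcal{D}}f(X^j)\,\mathrm{d}x$, whose right-hand side is $O(\varepsilon^{-1})$ in every moment thanks to the $\mathbb{L}^4$-bound on $X^j$ from Lemma~\ref{lem_energy}; Poincar\'e's inequality then yields $\|w^j\|\leq C\|\nabla w^j\|+C\varepsilon^{-1}$. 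The crucial estimate
\begin{equation*}
\max_{1\leq j\leq J}\|\nabla w^j\|^{r}\leq k^{-r/2}\Big(\sum_{i=1}^{J}k\|\nabla w^i\|^2\Big)^{r/2}
\end{equation*}
combined with the bound $\mathbb{E}\big[\bigl(\sum_{i}k\|\nabla w^i\|^2\bigr)^{r/2}\big]\leq C$ --- which follows from Jensen's inequality when $r/2<1$ and from the higher-moment energy bounds Lemma~\ref{lem_energy}~(iii)--(iv) in general --- supplies a factor $k^{-r/2}$ for any chosen $r>0$.

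Assembling these pieces via H\"older in $\omega$, the exponent $\varepsilon^{1-p}$ emerges from the combination of the $\varepsilon^{-1}$ prefactor of $\overline{w^j}$ with the $(p-a)$-power of $\varepsilon^{-1}$ from the $\mathbb{H}^2$-bound, while the factor $k^{(2-p)/2}$ arises once $r=p-2$ is chosen for the gradient-of-$w$ contribution. The main obstacle is the careful matching of interpolation and H\"older exponents: one must arrange simultaneously that (i) the $\mathbb{H}^1$-factor, which is only $O(\varepsilon^{-1/2})$ in moments, enters at a power whose $\varepsilon$-scaling is dominated by $\varepsilon^{1-p}$; (ii) the $\mathbb{L}^4$-bound is strong enough to absorb the mean of $w^j$; and (iii) the power of $\max_j\|\nabla w^j\|$ remains equal to $p-2<1$, so that Jensen's inequality is applicable to the dissipation sum without invoking higher moments. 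It is precisely the window $p\in(2,3)$ in which all three constraints balance, which is where the stated range of exponents comes from.
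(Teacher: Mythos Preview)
Your overall strategy --- extract $\mathbb{H}^2$-control of $X^j$ from the chemical-potential equation, bound $\max_j\|\nabla w^j\|$ via the dissipation sum, and combine with the higher-moment energy estimates through H\"older --- is exactly the paper's. The gap is in the interpolation inequality you select, and this is what prevents you from landing on the precise exponents $\varepsilon^{1-p}k^{(2-p)/2}$.

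Any interpolation between $\mathbb{L}^2$ and $\mathbb{H}^2$ in two dimensions forces the $\mathbb{H}^2$-seminorm to appear with power $p/2$: from $\|u\|_{\mathbb{L}^\infty}\leq C\|u\|^{1-1/s}\|u\|_{\mathbb{H}^s}^{1/s}$ and $\|u\|_{\mathbb{H}^s}\leq \|u\|^{1-s/2}\|u\|_{\mathbb{H}^2}^{s/2}$ one always gets $\|u\|_{\mathbb{L}^\infty}^p\leq C\|u\|^{p/2}\|u\|_{\mathbb{H}^2}^{p/2}$, regardless of the intermediate $s$. Since $p/2>1$ for $p>2$, after H\"older in $\omega$ you cannot reduce the $\|\Delta X^j\|$-contribution to a \emph{first} moment; the only moment of $\max_j\|\Delta X^j\|$ that Lemma~\ref{lem_energy} makes available (via the $L^2$-in-time dissipation bound on $\|\nabla w^j\|$) is the first. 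Carrying your route through honestly yields a bound of order $\varepsilon^{-p/2}k^{-p/4}$ from the $\|\nabla w^j\|$-term (and an $\varepsilon^{-p}$ term from the mean of $w^j$), not $\varepsilon^{1-p}k^{(2-p)/2}$. Your final paragraph asserts that the $\|\nabla w^j\|$-factor enters with exponent $r=p-2$, but nothing in your $\mathbb{L}^2$--$\mathbb{H}^s$ framework produces that exponent; the claim is simply inconsistent with the interpolation you set up.

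The paper instead interpolates between $\mathbb{H}^1$ and $\mathbb{H}^2$: from $\mathbb{W}^{1,p}\hookrightarrow\mathbb{L}^\infty$ ($p>2$, $d=2$) and the 2D Gagliardo--Nirenberg inequality $\|\nabla u\|_{\mathbb{L}^p}\leq C\|\nabla u\|^{2/p}\|\Delta u\|^{(p-2)/p}$ one obtains
\[
\|X^j\|_{\mathbb{L}^\infty}^p \leq C\|\nabla X^j\|^2\,\|\Delta X^j\|^{p-2}\,.
\]
Now $\|\Delta X^j\|$ carries the exponent $p-2<1$, and H\"older in $\omega$ with conjugate exponents $\frac{1}{3-p},\frac{1}{p-2}$ isolates $\mathbb{E}\bigl[\max_j\|\Delta X^j\|\bigr]^{p-2}$, which the paper bounds by $(C\varepsilon^{-1}k^{-1/2})^{p-2}$ in a first step. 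The remaining factor $\mathbb{E}\bigl[\max_j\|\nabla X^j\|^{2/(3-p)}\bigr]^{3-p}\leq C\varepsilon^{-1}$ comes from Lemma~\ref{lem_energy}~iv). This is precisely where the restriction $p<3$ enters, and it delivers the stated product $\varepsilon^{-1}\cdot\varepsilon^{-(p-2)}k^{-(p-2)/2}=\varepsilon^{1-p}k^{(2-p)/2}$.
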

\begin{proof}
{\bf 1.} {The second equation in} Scheme~\ref{scheme_time}$_2$ implies
$\sqrt{k}\|\Delta X^j(\omega)\| \leq 2\frac{\sqrt{k}}{\eps}\|w^j(\omega)\| + 2\frac{\sqrt{k}}{\eps^2}\|f(X^j(\omega))\|$,
for $\omega \in \Omega$.
Then Lemma~\ref{lem_energy}, ii), and Gagliardo-Nirenberg and Poincar\'e inequalities imply
\begin{eqnarray}\nonumber
\D \mathbb{E}\big[\max_{1\leq j \leq J} \sqrt{k}\|\Delta X^j\| \big] 
&\leq& 
\D \frac{C}{\eps}\mathbb{E}\Big[\Big(k\sum_{j=1}^{J}\|\nabla w^j\|^2\Big)^{1/2}\Bigr]
  + \frac{C\sqrt{k}}{\eps^2}\mathbb{E}\Big[\max_{1\leq j \leq J} \Big(\|X^j\|_{\mathbb{L}^6}^3 + \|X^j\|\Big)\Big]
\\ \label{h2_est}
&\leq& 
\D  \frac{C}{\eps} + \frac{C\sqrt{k}}{\eps^2}\mathbb{E}\Big[ \max_{1\leq j \leq J} \|X^j\|_{\mathbb{L}^4}^2\|\nabla X^j\|\Big]
\\ \nonumber
&\leq& 
\D  \frac{C}{\eps} + \frac{C\sqrt{k}}{\eps^2}\mathbb{E}\Big[\max_{1\leq j \leq J} \|X^j\|_{\mathbb{L}^4}^4\Big]^{1/2}\mathbb{E}\Big[\max_{1\leq j \leq J}\|\nabla X^j\|^2\Big]^{1/2}\, ,
\end{eqnarray}
which is bounded by $C \eps^{{-1}}$ for $k \leq \varepsilon^4$. 

{\bf 2.} Since ${\mathbb W}^{1,p} \hookrightarrow  {\mathbb L}^{\infty}$ ($p>2$),
by Gagliardo-Nirenberg \rev{inequality $\|\cdot\|_{\mathbb{L}^p}\leq C_p\|\cdot\|_{\mathbb{L}^2}^{\frac{2}{p}} \|\cdot\|_{\mathbb{H}^1}^{\frac{p-2}{p}}$ ($d=2$, $p>2$)},  H\"older inequality, 
Lemma~\ref{lem_energy}, iv), and step {\bf 1.}, we get for \revp{$2<p<3$}
\begin{eqnarray*}
\mathbb{E}\Big[{\max_{1\leq j\leq J}} \|X^j\|_{\mathbb{L}^\infty}^p\Big]
& \leq & \revp{ C \mathbb{E}\Big[\max_{1\leq j \leq J}\|\nabla X^j\|_{\mathbb{L}^p}^p\Big]
\leq C \mathbb{E}\Big[\max_{1\leq j \leq J}\|\nabla X^j\|^2\|\Delta X^j\|^{p-2}\Big]
}
\\
& \leq &
\revp{C\mathbb{E}\Big[\max_{1\leq j \leq J}\|\nabla X^j\|^{\frac{2}{3-p}}\Big]^{3-p}  \mathbb{E}\Big[\max_{1\leq j \leq J}\|\Delta X^j\|\Big]^{p-2}}
\\
& \leq &
\revp{C\eps^{-1}\mathbb{E}\Big[\eps^2\max_{1\leq j \leq J}\|\nabla X^j\|^{4}\Big]^{\frac{3-p}{2(3-p)}} k^{-\frac{p-2}{2}} \mathbb{E}\Big[\sqrt{k}\max_{1\leq j \leq J}\|\Delta X^j\|\Big]^{p-2}}
\\
& \leq & \revp{C\eps^{-1} k^{\frac{2-p}{2}}\eps^{-(p-2)} = C\eps^{1-p} k^{-\frac{p-2}{2}}}
\end{eqnarray*}
\qed
\end{proof}
The following lemma sharpens the statement of Lemma \ref{thma2} for iterates
$\{Z^j\}_{j=1}^J$, where $Z^j := X^j - X^j_{\tt CH}$. {It involves the
parameter ${\mathfrak n}_{\tt CH}>0$ from Lemma \ref{lem_xch}, ii).}

\begin{lem}\label{lem_err_l2l4}
{Suppose {\bf (B)}}.
There exists {$C >0$} such that
\begin{eqnarray*}\nonumber
&& {\mathbb E}\bigl[  \max_{1 \leq j \leq J}\Vert Z^j\Vert^2\bigr] + 
{\mathbb E}\Bigl[ \sum_{j=1}^J  \Vert Z^j - Z^{j-1}\Vert^2 
+ \varepsilon k \sum_{j=1}^J \Vert \Delta Z^j\Vert^2\Bigr] 
\\ 
\label{est_l2_err}
&& \quad + \frac{k}{\varepsilon} \sum_{j=1}^J {\mathbb E}\Bigl[  
 \Vert Z^j \nabla Z^j\Vert^2
 + \Vert X^j_{\tt CH} \nabla Z^j\Vert^2 \Bigr] 
 \leq {{\mathcal F}_1(k,\varepsilon; \sigma_0, \kappa_0, \gamma)} := \\
&&\qquad := C \max\Bigl\{ \Bigl(\frac{\max \bigl\{ \frac{k^2}{\varepsilon^4}, \varepsilon^{\gamma+\frac{\sigma_0+1}{3}}, \varepsilon^{\sigma_0},\varepsilon^{2\gamma}\bigr\}}{\varepsilon^{\kappa_0+10 + 4 {\mathfrak n}_{\tt CH}}} \Bigr)^{\frac{1}{2}}, \Bigl(\frac{\max \bigl\{ \frac{k^2}{\varepsilon^4}, \varepsilon^{\gamma+\frac{\sigma_0+1}{3}}, \varepsilon^{\sigma_0},\varepsilon^{2\gamma}\bigr\}}{\varepsilon^{\kappa_0+16}} \Bigr)^{\frac{1}{4}} \Bigr\} \, .
\end{eqnarray*}
\end{lem}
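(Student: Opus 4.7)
\medskip
\noindent\emph{Proof sketch.}
The plan is to derive an $\mathbb{L}^2$-based error identity for $Z^j := X^j - X^j_{\tt CH}$ and then propagate it using the $\Omega_2$ vs.\ $\Omega_2^c$ decomposition already employed in Lemmas~\ref{lem_patherr}--\ref{cor_err_xxa}, this time tracking the stronger $\mathbb{L}^2$, $\mathbb{H}^2$ and $\mathbb{L}^4$-quantities that appear on the left-hand side. Subtracting Scheme~\ref{scheme_time}$_2$ for the two processes yields $w^j-w^j_{\tt CH}=-\varepsilon\Delta Z^j+\varepsilon^{-1}[f(X^j)-f(X^j_{\tt CH})]$ variationally; substituting this into the difference of the first equations and testing with $\varphi=Z^j$ (rather than $\varphi=\deltainv Z^j$ as in Lemma~\ref{thma2}), after two integrations by parts that exploit the natural Neumann condition $\partial_\normal Z^j=0$, I would obtain
\begin{equation*}
\tfrac12\bigl(\|Z^j\|^2-\|Z^{j-1}\|^2+\|Z^j-Z^{j-1}\|^2\bigr)+k\varepsilon\|\Delta Z^j\|^2+\tfrac{k}{\varepsilon}\bigl(\nabla[f(X^j)-f(X^j_{\tt CH})],\nabla Z^j\bigr)=\varepsilon^{\gamma}(g,Z^j)\Delta_jW.
\end{equation*}

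The central algebraic step is the expansion $f(X^j)-f(X^j_{\tt CH})=Z^j\bigl[(Z^j)^2+3Z^jX^j_{\tt CH}+3(X^j_{\tt CH})^2-1\bigr]$: differentiating and pairing with $\nabla Z^j$ extracts the two positive contributions $\|Z^j\nabla Z^j\|^2$ and $3\|X^j_{\tt CH}\nabla Z^j\|^2$ that appear in the target estimate. The remaining mixed terms in $Z^j$, $X^j_{\tt CH}$ and $\nabla X^j_{\tt CH}$ I would partly absorb back into these quartic contributions via Young's inequality, and control the rest using $\|X^j_{\tt CH}\|_{\mathbb{L}^\infty}\leq C$ of Lemma~\ref{lem_xch},~iii) together with $\|X^j_{\tt CH}\|_{\mathbb{H}^2}\leq C\varepsilon^{-\mathfrak n_{\tt CH}}$ of Lemma~\ref{lem_xch},~ii) and Gagliardo--Nirenberg. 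This produces a residual of the form $C\varepsilon^{-1-2\mathfrak n_{\tt CH}}k\|\nabla Z^j\|^2$, which after invocation of Lemma~\ref{lem_patherr},~i) on $\Omega_2$ (each application of the weak bound costing a factor $\varepsilon^{-4-\kappa_0}$) is precisely what generates the exponent $\kappa_0+10+4\mathfrak n_{\tt CH}$ in the first entry of $\mathcal F_1$.

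Summation in $j$ with $\max_{j}$, combined with Lemma~\ref{lem_jeps} to replace $J_\varepsilon$ by $J$ on $\Omega_2$, the discrete BDG inequality of Lemma~\ref{lembdg} for the martingale part (using the reshuffle $(g,Z^j)\Delta_jW=(g,Z^j-Z^{j-1})\Delta_jW+(g,Z^{j-1})\Delta_jW$ so that the non-martingale contribution is absorbed into $\|Z^j-Z^{j-1}\|^2$), the interpolation $\|Z^j\|^2\leq C\|Z^j\|_{\mathbb{H}^{-1}}\|\nabla Z^j\|$, and the discrete Gronwall lemma, then yield the first entry of $\mathcal F_1$ upon taking expectation on $\Omega_2$. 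For $\Omega_2^c$ I would proceed exactly as in Corollary~\ref{cor_err_xxa}, via Cauchy--Schwarz $\mathbb{E}[\mathbbm{1}_{\Omega_2^c}\,(\cdot)]\leq\mathbb{P}[\Omega_2^c]^{1/2}\mathbb{E}[(\cdot)^2]^{1/2}$; the second-moment bound on the right-hand side now requires not only the higher-moment energy bound of Lemma~\ref{lem_energy},~iv) but also the new $\mathbb{L}^\infty$-estimate of Lemma~\ref{lem_linftybnd}, which is exactly what tames the quartic $\|Z^j\nabla Z^j\|^2$; the fourth-root dependence on $\max\{\cdots\}$ and the exponent $\kappa_0+16$ then emerge from this squared-version $\varepsilon$-bookkeeping.

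The main obstacle will be the treatment of the cubic nonlinearity in this $\mathbb{L}^2$-setting: unlike Lemma~\ref{thma2}, where the discrete spectral estimate of Lemma~\ref{lem_xch},~v) provides $\mathbb{H}^{-1}$-coercivity and directly absorbs $\frac{1}{\varepsilon}(f'(X^j_{\tt CH})Z^j,Z^j)$, here the quartic dissipation $\frac{k}{\varepsilon}\|Z^j\nabla Z^j\|^2$ must dominate the remainder \emph{directly}, and balancing the coefficients against the $\varepsilon^{-2\mathfrak n_{\tt CH}}\|\nabla Z^j\|^2$ term coming from the $\mathbb{H}^2$-bound on $X^j_{\tt CH}$ is what forces the larger $\varepsilon$-penalty in $\mathcal F_1$ compared with Corollary~\ref{cor_err_xxa}.
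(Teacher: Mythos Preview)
Your identity and the algebraic expansion of the cubic nonlinearity are correct and match the paper's: testing the error equation with $Z^j$ and $-\Delta Z^j$, then extracting $3\|Z^j\nabla Z^j\|^2$ and $3\|X^j_{\tt CH}\nabla Z^j\|^2$ via integration by parts, leaves a residual of the form $C\varepsilon^{-1-2\mathfrak n_{\tt CH}}k\|\nabla Z^j\|^2$ after using Lemma~\ref{lem_xch},~ii)--iii). Your treatment of the stochastic term via the splitting $(g,Z^j)=(g,Z^j-Z^{j-1})+(g,Z^{j-1})$ and Lemma~\ref{lembdg} is also what the paper does.

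The divergence is in the closure. The paper does \emph{not} reopen the $\Omega_2$/$\Omega_2^c$ split at this level, does not invoke Lemma~\ref{lem_linftybnd}, and uses no Gronwall argument. After summing, taking $\max_j$ and expectation, the only uncontrolled quantities on the right-hand side are $C\varepsilon^{-1-2\mathfrak n_{\tt CH}}\mathbb{E}\bigl[k\sum_j\|\nabla Z^j\|^2\bigr]$ and the BDG contribution $C\varepsilon^\gamma\mathbb{E}\bigl[k\sum_j\|\nabla Z^{j-1}\|^2\bigr]^{1/2}$. Both are bounded in one line by Lemma~\ref{cor_err_xxa}, which already gives $\mathbb{E}\bigl[\varepsilon^4 k\sum_j\|\nabla Z^j\|^2\bigr]\leq \bigl(\varepsilon^{-\kappa_0}\max\{\cdots\}\bigr)^{1/2}$ on the \emph{full} sample space. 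Dividing out $\varepsilon^4$ and multiplying by $\varepsilon^{-1-2\mathfrak n_{\tt CH}}$ produces $\bigl(\varepsilon^{-\kappa_0-10-4\mathfrak n_{\tt CH}}\max\{\cdots\}\bigr)^{1/2}$, which is the first entry of $\mathcal F_1$; the fourth root in the second entry arises because the BDG term carries an additional square root of the same quantity. So the exponents do not come from a pathwise $\Omega_2$ analysis as you suggest, and your bookkeeping (``each application of the weak bound costing a factor $\varepsilon^{-4-\kappa_0}$'') is off.

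Your proposed $\Omega_2^c$ step also has a genuine gap: applying Cauchy--Schwarz there requires $\mathbb{E}\bigl[(\varepsilon k\sum_j\|\Delta Z^j\|^2)^2\bigr]$ and $\mathbb{E}\bigl[(\tfrac{k}{\varepsilon}\sum_j\|Z^j\nabla Z^j\|^2)^2\bigr]$ to be bounded polynomially in $\varepsilon^{-1}$, but neither Lemma~\ref{lem_energy} nor Lemma~\ref{lem_linftybnd} delivers second moments of $\|\Delta X^j\|$; only a first-moment bound on $\sqrt{k}\|\Delta X^j\|$ is available. The paper's route sidesteps this entirely by never needing more than the first-moment gradient bound encapsulated in Lemma~\ref{cor_err_xxa}.
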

{In order to establish convergence to zero {(for $\varepsilon \downarrow 0$)} of the right-hand side in the inequality of the lemma, we need to impose a stronger assumptions than {\bf (B)}; for simplicity, we assume ${\mathfrak n}_{\tt CH} \geq \frac{3}{2}$ in Lemma \ref{lem_xch}:
\begin{itemize}
\item[{\bf (C$_1$)}] 
Assume {\bf (B)}, and that $(\sigma_0, \kappa_0, \gamma)$ also satisfies 
$$\sigma_0 >10 + \kappa_0 + 4 {\mathfrak n}_{\tt CH}\,, \qquad 
\gamma > \max \bigl\{\frac{2\kappa_0+19+8 {\mathfrak n}_{\tt CH}}{3} , \frac{\kappa_0 + 10 + 4{\mathfrak n}_{\tt CH}}{2} \bigr\}\, .$$
For sufficiently small $\varepsilon_0 \equiv (\sigma_0,\kappa_0) >0$ and ${\mathfrak l}_{\tt CH} \geq 3$ from Lemma \ref{lem_xch},
 and arbitrary $0 < \beta < \frac{1}{2}$ the time-step satisfies
$$
k \leq C \min \bigl\{\varepsilon^{{\mathfrak l}_{\tt CH}},
\varepsilon^{7+\frac{\kappa_0}{2}+2 {\mathfrak n}_{\tt CH} + \beta}\bigr\} \qquad \forall\, \varepsilon \in (0,\varepsilon_0) \,.
$$
\end{itemize}
\revised{Compared} to assumption {\bf (B)}, only larger values of $\sigma_0$, and consequently larger values of $\gamma$ are admitted, as well as smaller time-steps $k$.}

\begin{proof}
{\bf 1.} We subtract Scheme~\ref{scheme_time}  in strong form  for 
\revised{$g\not\equiv0$ and $g\equiv0$}, respectively, 
fix  $\omega\in \Omega$, and multiply the resulting error equations with $Z^j(\omega)$ and $-\Delta Z^j(\omega)$, respectively. We obtain
\begin{eqnarray}\nonumber
&&\frac{1}{2} \bigl( \Vert Z^j\Vert^2 - \Vert  Z^{j-1}\Vert^2 + \Vert Z^j - Z^{j-1} \Vert^2\bigr)
+ \varepsilon k \Vert \Delta Z^j\Vert^2 \\ \label{esti-three0}
&&\qquad 
 + \frac{k}{\varepsilon} \bigl( f(X^j)-f(X^j_{\tt CH}), -\Delta Z^j\bigr)  = \varepsilon^{\gamma}  \bigl( g,\rev{Z^j} \bigr) \Delta_j W\, .
\end{eqnarray}
We estimate the right-hand side above as
\rev{
\begin{eqnarray*}\nonumber
\varepsilon^{\gamma}  \bigl( g,\rev{Z^j} \bigr) \Delta_j W &=& \varepsilon^{\gamma}  \bigl( g, Z^j-Z^{j-1}\bigr) \Delta_j W   + \varepsilon^{\gamma} \bigl(g, Z^{j-1}\bigr) \Delta_j W 
\\ 
& \leq &\frac{1}{4} \Vert Z^j-Z^{j-1}\Vert^2 +
\varepsilon^{2\gamma}  \Vert g\Vert^2 \vert \Delta_j W\vert^2 +\varepsilon^{\gamma} \bigl( g, Z^{j-1}\bigr) \Delta_j W\, . 
\end{eqnarray*}
}
We restate the nonlinear term in (\ref{esti-three0}) as
\rev{
\begin{eqnarray*}\nonumber
\frac{k}{\varepsilon} \bigl( f(X^j)-f(X^j_{\tt CH}), -\Delta Z^j\bigr)
 &=& \frac{k}{\varepsilon} \Bigl( \vert X^j\vert^2 X^j - \rev{(\vert X^j_{\tt CH} \vert^2 X^j-\vert X^j_{\tt CH} \vert^2 X^j)} - \vert X_{\tt CH}^j\vert^2 X^j_{\tt CH}, -\Delta Z^j\Bigr) 
\\ & &- {\frac{k}{\varepsilon} (Z^j, -\Delta Z^j)}
\\ \label{esti-five1}
&=& \frac{k}{\varepsilon} \Bigl( Z^j [Z^j + 2 X^j_{\tt CH}] X^j { + \vert X^j_{\tt CH}\vert^2 Z^j}, -\Delta Z^j\Bigr) - {\frac{k}{\varepsilon} \|\nabla Z^j\|^2} \\ \nonumber
&=& \frac{k}{\varepsilon} \bigl( \vert Z^j\vert^2 Z^j, -\Delta Z^j \bigr)  
 -{\frac{k}{\varepsilon} \|\nabla Z^j\|^2}
\\ \nonumber
& & + \frac{3k}{\varepsilon} \bigl( \vert Z^j\vert^2 X^j_{\tt CH}  , -\Delta Z^j\bigr) + \frac{{3}k}{\varepsilon} \bigl(\vert X^j_{\tt CH}\vert^2 Z^j  , -\Delta Z^j\bigr)
\\ \nonumber
&=:& { \frac{3k}{\varepsilon} \Vert Z^j \nabla Z^j\Vert^2} -{\frac{k}{\varepsilon} \|\nabla Z^j\|^2} + {\tt I_1} + {\tt I_2}\, ,
\end{eqnarray*}
where in the last step we used integration by parts $\bigl( \vert Z^j\vert^2 Z^j, -\Delta Z^j \bigr) = 3\Vert Z^j \nabla Z^j\Vert^2$.
}

Next, we apply integration by parts to 
${\tt I_1}$, ${\tt I_2}$ to estimate
\begin{eqnarray*}
{\tt I}_{1} & := & \revdim{\frac{3k}{\varepsilon} \bigl( \vert Z^j\vert^2 X^j_{\tt CH}  , -\Delta Z^j\bigr)}
 = \frac{\revdim{3}k}{\varepsilon} \Bigl[2\bigl( Z^j \nabla Z^j X^j_{\tt CH}, \nabla Z^j\bigr) + \bigl( Z^j \nabla Z^j, Z^j \nabla X^j_{\tt CH}\bigr) \Bigr] \\
&\geq& - \frac{2k}{\varepsilon} \Bigl[C {\Vert X^j_{\tt CH}\Vert_{{\mathbb L}^{\infty}}}
 {\Vert \nabla Z^j\Vert}
+ {\Vert \nabla X^j_{\tt CH}\Vert_{{\mathbb L}^4}} {\Vert Z^j\Vert_{{\mathbb L}^4}}\Bigr] \Vert Z^j \nabla Z^j\Vert\,, \\
{\tt I}_{2} 
& := & \revdim{\frac{{3}k}{\varepsilon} \bigl(\vert X^j_{\tt CH}\vert^2 Z^j  , -\Delta Z^j\bigr)} 
\geq {\frac{3k}{\varepsilon} \Vert X^j_{\tt CH} \nabla Z^j\Vert^2} - \frac{6k}{\varepsilon} {\Vert Z^j}\Vert_{{\mathbb L}^{4}} 
{\Vert \nabla X^j_{\tt CH}\Vert_{{\mathbb L}^4}} \Vert X^j_{\tt CH}\nabla Z^j \Vert\, .
\end{eqnarray*}

Hence, using Poincar\'e, Sobolev and Young's inequalities, Lemma~\ref{lem_xch}, ii), and assumption {\bf (B)},  we deduce that
\begin{equation*}\label{esti-final}
\frac{k}{\varepsilon} \bigl( f(X^j)-f(X^j_{\tt CH}), -\Delta Z^j\bigr)
\geq {\frac{k}{2\varepsilon} \bigl[\Vert Z^j \nabla Z^j\Vert^2
 + \Vert X^j_{\tt CH} \nabla Z^j\Vert^2} \bigr] 
- \frac{Ck}{\varepsilon^{1+2{\mathfrak{n}}_{\tt CH}}} 
{\Vert \nabla Z^j\Vert^2}\, .
\end{equation*}
{\bf 2.} We insert these bounds into (\ref{esti-three0}), 
sum up over all time-steps, take $\max_{j\leq J}$ and expectations, 
\begin{eqnarray}\nonumber
&& \revdim{\frac{1}{2}}{\mathbb E}\bigl[ \max_{1 \leq j \leq J} \Vert Z^j\Vert^2\bigr] + 
{\mathbb E}\Bigl[ \sum_{j=1}^J \revdim{\frac{1}{4}} \Vert Z^j - Z^{j-1}\Vert^2 
+ \varepsilon k \sum_{j=1}^J \Vert \Delta Z^j\Vert^2\Bigr] 
\\ \label{est_xx1}
&& \quad + \frac{k}{2\varepsilon} \sum_{j=1}^J {\mathbb E}\Bigl[  { \Vert Z^j \nabla Z^j \Vert^2}
 + \Vert X^j_{\tt CH} \nabla Z^j\Vert^2 \Bigr] 
\\ \nonumber
&&\qquad  \leq 
 {\frac{C k}{\varepsilon^{1+{2\mathfrak{n}}_{\tt CH}}} 
 {\sum_{j=1}^J {\mathbb E}\bigl[ \Vert \nabla Z^j \Vert^2\bigr]}}
\revdim{+} \varepsilon^{\gamma}\mathbb{E}\Big[ \max_{1 \leq j \leq J} \sum_{i=1}^j\rev{\bigl( g, Z^{i-1}\bigr)} \Delta_i W\Big]
 + C \varepsilon^{2\gamma}\,.
\end{eqnarray}
{We use the discrete BDG-inequality (Lemma \ref{lembdg}) and the Poincar\'e inequality to estimate the last term as follows,
$$
\begin{array}{lll}
\rev{\D \varepsilon^{\gamma}\mathbb{E}\Big[\max_{1 \leq j \leq J} \sum_{i=1}^j  \Bigl( g, Z^{i-1}\bigr) \Delta_i W\Bigr)\Big]
\leq 
C\varepsilon^{\gamma} \| g\|_{\mathbb{L}^\infty} \mathbb{E}\Big[ k \sum_{j=1}^J \|\nabla Z^{j-1}\|^2\Big]^{\frac{1}{2}}}\,.
\end{array}
$$
We now use Lemma \ref{cor_err_xxa} to bound the right-hand side of  
(\ref{est_xx1}).}
\qed
\end{proof}

\medskip

{A crucial step in this section is to establish convergence of $\max_{1 \leq j \leq J}\Vert Z^j\Vert_{{\mathbb L}^{\infty}}$ for $\varepsilon \downarrow 0$; it turns out that this can only be validated on large subsets of $\Omega$, which motivates the introduction of the following (family of) subsets:}
For every \revp{$2< p < 3$}, we define
\begin{equation}\label{kap1} \kappa \equiv \kappa_p := \revp{\Bigl[ \varepsilon^{1-p} k^{\frac{2-p}{2}} \ln \bigl(\varepsilon^{1-p} \bigr)\Bigr]^{\frac{1}{p}}} \, ,
\end{equation}
and the sequence of sets $\{ {\Omega}_{\kappa, j}\}_{j=1}^J \subset \Omega$ via
\begin{equation}\label{cut1} 
{\Omega}_{\kappa, j} = \bigl\{ \omega \in \Omega: \, \max_{1 \leq \ell \leq j} \Vert X^\ell\Vert_{{\mathbb L}^{\infty}} \leq \kappa\bigr\} \qquad (\kappa >0)\, .
\end{equation}
Note that ${\Omega}_{\kappa, j} \subset {\Omega}_{\kappa, j-1}$.
Markov's inequality yields that
\begin{equation}\label{mark1}
{\mathbb P}\bigl[{\Omega}_{\kappa,j}\bigr]
\geq 1- \frac{{\mathbb E}[\max_{1 \leq \ell \leq j} \Vert X^\ell\Vert^p_{{\mathbb L}^{\infty}}]}{\kappa^p}\, .
\end{equation}
Clearly, $\D {\lim_{\revised{\eps \downarrow 0}}}\min_{1\leq j\leq J}\mathbb{P}[\Omega_{\kappa,j}] = 1$ by Lemma~\ref{lem_linftybnd}.

We use Lemma~\ref{lem_err_l2l4} to show a local error estimate.
\begin{lem}\label{lem_err_nablaz}
{Assume {\bf (B)}  and \revp{$2<p<3$}.}
Then there exists $C>0$ such that 
\begin{eqnarray*}
&& {\mathbb E}\bigl[ { \max_{0\leq j \leq J}} \mathbbm{1}_{{\Omega}_{\kappa,j}}\Vert \nabla Z^j\Vert^2\bigr]
\leq {\mathcal F}_2({k, \varepsilon}; \sigma_0, \kappa_0, \gamma) :=  \\
&&\quad := C\max \Bigl\{ \frac{(1+\kappa^2)}{\varepsilon^2} {{\mathcal F}_1 \bigl(k, \varepsilon; \sigma_0, \kappa_0, \gamma \bigr)} ,\frac{(1+ \kappa^2)}{\varepsilon^{7+ 2{\mathfrak n}_{\tt CH}}} \Bigl(\frac{1}{\varepsilon^{\kappa_0}} \max \bigl\{ \frac{k^2}{\varepsilon^4}, \varepsilon^{\gamma+\frac{\sigma_0+1}{3}}, \varepsilon^{\sigma_0},\varepsilon^{2\gamma}\bigr\}\Bigr)^{\frac{1}{4}}\Bigr\}\, .
\end{eqnarray*}
\end{lem}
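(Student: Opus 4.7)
The plan is to derive a discrete energy identity for $Z^j = X^j - X^j_{\tt CH}$ that produces $\|\nabla Z^j\|^2$ on the left-hand side. I will subtract Scheme~\ref{scheme_time} for the noisy and the noise-free problems and test the resulting first error equation (in strong form) with $\varphi = \tilde w^j := w^j - w^j_{\tt CH}$, while exploiting the second error equation with $\psi = Z^j - Z^{j-1}$. Using the identity $2(a, a-b) = \|a\|^2 - \|b\|^2 + \|a-b\|^2$, this yields
\[
\tfrac{\varepsilon}{2}\bigl(\|\nabla Z^j\|^2 - \|\nabla Z^{j-1}\|^2 + \|\nabla(Z^j - Z^{j-1})\|^2\bigr) + \tfrac{1}{\varepsilon}\bigl(f(X^j) - f(X^j_{\tt CH}), Z^j - Z^{j-1}\bigr) + k\|\nabla \tilde w^j\|^2 = \varepsilon^\gamma (g, \tilde w^j)\Delta_j W.
\]
I then sum from $i=1$ up to $j$, multiply by $\mathbbm{1}_{\Omega_{\kappa,j}}$, and pass to $\max_{1\leq j\leq J}$ before taking expectations. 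The inclusion $\Omega_{\kappa,j} \subset \Omega_{\kappa,i}$ for $i\leq j$ allows the pointwise bound $|f(X^i) - f(X^i_{\tt CH})| \leq C(1+\kappa^2)|Z^i|$ on the relevant event, combining the constraint \eqref{cut1} with the $\mathbb{L}^\infty$-bound in Lemma~\ref{lem_xch}\,iii). The nonlinear contribution will be controlled by Cauchy--Schwarz and Young's inequality, reducing it to quantities already estimated by $\mathcal{F}_1$ in Lemma~\ref{lem_err_l2l4} (namely $\mathbb{E}[\sum_i \|Z^i - Z^{i-1}\|^2]$ and $\mathbb{E}[\max_i \|Z^i\|^2]$), which is the source of the first term $\frac{(1+\kappa^2)}{\varepsilon^2}\mathcal{F}_1$ in $\mathcal{F}_2$.

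The critical issue is that $\tilde w^j$ and $\mathbbm{1}_{\Omega_{\kappa,j}}$ are both $\mathcal{F}_{t_j}$-measurable, so a direct martingale argument on the right-hand side is unavailable. To bypass this I will reformulate $(g, \tilde w^i)$ by testing the second error equation with $\psi = g$, which yields
\[
\varepsilon^\gamma(g, \tilde w^i)\Delta_i W = \varepsilon^{\gamma+1}(\nabla Z^i, \nabla g)\Delta_i W + \varepsilon^{\gamma-1}\bigl(f(X^i) - f(X^i_{\tt CH}), g\bigr)\Delta_i W.
\]
Each factor will be split into its value at step $i-1$ plus the one-step increment: the $i-1$-adapted parts are genuine martingale increments and will be handled via the discrete BDG inequality of Lemma~\ref{lembdg} with the stopping index $\tau := \inf\{j: \|X^j\|_{\mathbb{L}^\infty} > \kappa\}-1$, whose indicator matches $\mathbbm{1}_{\Omega_{\kappa,j}}$. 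The one-step differences will be absorbed into $\frac{\varepsilon}{2}\|\nabla(Z^i - Z^{i-1})\|^2$ on the left via Young's inequality, leaving only $\varepsilon^{2\gamma}|\Delta_i W|^2$-type remainders with harmless expectation.

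The complementary estimate $\frac{(1+\kappa^2)}{\varepsilon^{7+2\mathfrak{n}_{\tt CH}}}(\cdots)^{1/4}$ I intend to derive by a Gagliardo--Nirenberg interpolation $\|\nabla v\|^2 \leq C\|v\|_{\mathbb{H}^{-1}}^{2/3}\|v\|_{\mathbb{H}^2}^{4/3}$ applied to $Z^j$, followed by H\"older's inequality in $\Omega$: the $\mathbb{H}^{-1}$-bound comes from Corollary~\ref{cor_err_xxa}, while the required $\mathbb{H}^2$-control on $Z^j$ is dominated through the second scheme equation, the $\mathbb{L}^\infty$-constraint on $X^j$, and the $\mathbb{H}^2$-bound $\|X^j_{\tt CH}\|_{\mathbb{H}^2} \leq C\varepsilon^{-\mathfrak{n}_{\tt CH}}$ of Lemma~\ref{lem_xch}\,ii) --- which accounts for the exponent $7+2\mathfrak{n}_{\tt CH}$. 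Taking the larger of the two bounds gives $\mathcal{F}_2$.

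The main obstacle will be the noise term: because $\tilde w^j$ is one time-index \emph{too late} for a direct BDG argument, the success of the scheme rests entirely on the substitution via the second equation and on having enough dissipative curvature $\frac{\varepsilon}{2}\|\nabla(Z^i-Z^{i-1})\|^2$ on the left to absorb the one-step differences without incurring an unacceptable factor of $k^{-1}$; related care will be needed to make the nonlinear contribution compatible with the stability estimates of Lemma~\ref{lem_err_l2l4} rather than losing a factor of $\sqrt{T/k}$ by a naive Cauchy--Schwarz.
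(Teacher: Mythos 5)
There is a genuine gap, and it sits exactly where you flag a risk without resolving it: the drift term produced by your choice of test functions. Testing the error equations with $\tilde w^j$ and $Z^j-Z^{j-1}$ yields the per-step term $\tfrac1\varepsilon\bigl(f(X^j)-f(X^j_{\tt CH}),Z^j-Z^{j-1}\bigr)$, which carries \emph{no} factor of $k$. Your plan to bound it via Cauchy--Schwarz/Young in terms of $\mathbb{E}[\max_i\|Z^i\|^2]$ and $\mathbb{E}[\sum_i\|Z^i-Z^{i-1}\|^2]$ cannot work: after summing over $i=1,\dots,J$ the companion sum $\sum_i\|Z^i\|^2$ is only controlled by $J\max_i\|Z^i\|^2=\tfrac Tk\max_i\|Z^i\|^2$, so the best you obtain is $\tfrac{C(1+\kappa^2)}{\varepsilon^2}\sqrt{T/k}\,\mathcal F_1$ (after dividing out the $\varepsilon$ in front of $\|\nabla Z^j\|^2$ on your left-hand side), which overshoots the claimed first term of $\mathcal F_2$ by $k^{-1/2}$ --- a huge factor under {\bf (B)}/{\bf (C$_1$)}, so the stated inequality (and the later requirement $\mathcal F_2\to0$) is not reached. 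The paper avoids this structurally: it tests the error equations with $-\Delta Z^j$ and $\Delta^2 Z^j$, so that the nonlinearity enters as $\tfrac k\varepsilon\bigl(\nabla[f(X^j)-f(X^j_{\tt CH})],\nabla\Delta Z^j\bigr)$, i.e.\ \emph{with} the $k$-weight; the highest-order part is absorbed into the dissipation $\varepsilon k\|\nabla\Delta Z^j\|^2$, and the remainders are exactly the $k$-weighted quantities $\tfrac k\varepsilon\sum_j\|Z^j\nabla Z^j\|^2$ (Lemma~\ref{lem_err_l2l4}, source of $\tfrac{(1+\kappa^2)}{\varepsilon^2}\mathcal F_1$) and $\varepsilon^4k\sum_j\|\nabla Z^j\|^2$ (Lemma~\ref{cor_err_xxa}, source of the $\varepsilon^{-(7+2\mathfrak n_{\tt CH})}$ term), with the noise handled by splitting off the $\mathcal F_{t_{i-1}}$-adapted part $\mathbbm{1}_{\Omega_{\kappa,i-1}}(\nabla g,\nabla Z^{i-1})\Delta_iW$ for Lemma~\ref{lembdg} and absorbing the indicator differences into the nonnegative telescoping term $\sum_j(\mathbbm{1}_{\Omega_{\kappa,j-1}}-\mathbbm{1}_{\Omega_{\kappa,j}})\|\nabla Z^{j-1}\|^2$. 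If you insist on your energy identity you would have to re-express $Z^j-Z^{j-1}$ through the first error equation to recover a $k$-factor, which essentially collapses back onto the paper's argument.

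A secondary problem is your derivation of the second term of $\mathcal F_2$ by interpolating $\|\nabla Z^j\|^2\le C\|Z^j\|_{\mathbb H^{-1}}^{2/3}\|Z^j\|_{\mathbb H^2}^{4/3}$: you have no uniform-in-$j$ $\mathbb H^2$ control of $Z^j$ (only the $k$-weighted sum $\varepsilon k\sum_j\|\Delta Z^j\|^2\le\mathcal F_1$ from Lemma~\ref{lem_err_l2l4}, and $\mathbb{E}[\max_j\sqrt k\|\Delta X^j\|]\le C\varepsilon^{-1}$ as in Lemma~\ref{lem_linftybnd}), so this route again leaks negative powers of $k$ and does not reproduce the stated exponent $7+2\mathfrak n_{\tt CH}$ together with the power $\tfrac14$; in the paper that term comes instead from the $\|\nabla X^j_{\tt CH}\|_{\mathbb L^4}$-weighted remainder (via Lemma~\ref{lem_xch}, ii)) and the BDG bound $\tfrac{C\varepsilon^\gamma}{\varepsilon^2}\bigl(\varepsilon^{-\kappa_0}\max\{\cdot\}\bigr)^{1/4}$, not from an interpolation of $Z^j$ in $\mathbb H^2$.
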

{In order to establish convergence to zero {(for $\varepsilon \downarrow 0$)} of the right-hand side in the inequality of the lemma, we  impose again a stronger assumptions
than {\bf (C$_1$)}:
\begin{itemize}
\item[{\bf (C$_2$)}] Assume {\bf (C$_1$)}, and that $(\sigma_0, \kappa_0, \gamma)$, and {$k$} satisfy  
\begin{equation}\label{bass-C2}\lim_{\varepsilon \downarrow 0} {\mathcal F}_2({k, \varepsilon}; \sigma_0, \kappa_0, \gamma) =0\,.
\end{equation}
\end{itemize}
{\begin{rem}\label{assu-ok}
A strategy to identify admissible \revised{quadruples $(\sigma_0, \kappa_0, \gamma, k)$} which meet assumption {\bf (C$_2$)} is as follows:
\begin{itemize}
\item[(1)] assumption {\bf (C$_1$)} establishes $\lim_{\varepsilon \downarrow 0} {{\mathcal F}_1(k,\varepsilon; \sigma_0, \kappa_0, \gamma)} = 0$, which appears as \revised{a} factor in the first term on the right-hand side in Lemma \ref{lem_err_nablaz}.
\item[(2)] the leading factor in $\mathcal{F}_2$ is \revp{$\frac{\kappa^2}{\varepsilon^2} \equiv
 \frac{\kappa_p^2}{\varepsilon^2} \leq \varepsilon^{\frac{1-3p}{p}}  \bigl\vert \ln (\varepsilon^{1-p} )\bigr\vert^{\frac{2}{p}}
                                        k^{\frac{2-p}{p}}$, for $2<p<3$} via (\ref{kap1}).
To meet (\ref{bass-C2}) therefore additionally requires {\em for some $p>2$}
\begin{equation}
\label{tar1} \revp{k^{\frac{2-p}{p}}} {{\mathcal F}_1(k,\varepsilon; \sigma_0, \kappa_0, \gamma)}\varepsilon^{\revp{\frac{1-3p}{p}}}  
           \bigl\vert \ln (\varepsilon^{1-p} )\bigr \vert^{\frac{2}{p}} \rightarrow 0 \qquad (\varepsilon \downarrow 0)\, ,
\end{equation}
and hence
\begin{equation}\label{add-1a}
\Bigl[{{\mathcal F}_1(k,\varepsilon; \sigma_0, \kappa_0, \gamma)}\varepsilon^{ \revp{\frac{1-3p}{p}}} 
            \bigl\vert \ln (\varepsilon^{1-p} )\bigr\vert^{\frac{2}{p}}\Bigr]^{\revp{\frac{p}{p-2}}} = o(k)\, . 
\end{equation}
A proper scenario is $k = \varepsilon^{\alpha}$ for some 
$\alpha >0$ to meet assumption ${\bf (C}_1{\bf )}$. We then sharpen
this choice of the time-step to
$k = \varepsilon^{\widetilde{\alpha}}$ for some $\widetilde{\alpha} \geq \alpha >0$
to have 
$${{\mathcal F}_1(k,\varepsilon; \sigma_0, \kappa_0, \gamma)}\revp{\varepsilon^{\frac{1-3p}{p}}}  \ln^{\frac{2}{p}}\bigl(\varepsilon^{1-p} \bigr) \leq \varepsilon^\eta $$
for an arbitrary $\eta > 0$. We now choose $2<p$, s.t. $\revp{\frac{p}{p-2}} \gg 0$ is sufficiently large to meet (\ref{add-1a}).
%
\item[(3)] We may proceed analogously for the second term on the right-hand side in Lemma \ref{lem_err_nablaz}.
\end{itemize}
\end{rem}
}}

\begin{proof}
We subtract Scheme~\ref{scheme_time} for \revised{$g\not\equiv0$ and $g\equiv0$} for a fixed $\omega\in \Omega$, and
multiply the first error equation with $-\Delta Z^j(\omega)$, and the second with
$\Delta^2 Z^j(\omega)$. We \rev{integrate by parts in the nonlinear term} and obtain
\begin{eqnarray}\nonumber
&&\frac{1}{2} \bigl( \Vert \nabla Z^j\Vert^2 - \Vert \nabla Z^{j-1}\Vert^2 + \Vert \nabla [Z^j - Z^{j-1}]\Vert^2\bigr)
+ \varepsilon k \Vert \nabla \Delta Z^j\Vert^2 
\\ \label{esti-three}
&&\qquad = 
{\frac{k}{\varepsilon} \bigl( \nabla [f(X^j)-f(X^j_{\tt CH})], \nabla \Delta Z^j\bigr)} 
+ \varepsilon^{\gamma} {\bigl( g, \rev{-\Delta} Z^j\bigr) \Delta_j W}  =: {\tt I} + {\tt II}\, .
\end{eqnarray}
We proceed as in the proof of Lemma~\ref{lem_err_l2l4} and rewrite the nonlinearity on the right-hand side as
 \begin{eqnarray*}\nonumber
 {\tt I}  &=& \frac{k}{\varepsilon} \bigl( \nabla [\vert Z^j\vert^2 Z^j], \nabla \Delta Z^j \bigr)  
+ \frac{3k}{\varepsilon} \bigl( \nabla[\vert Z^j\vert^2 X^j_{\tt CH}]  , \nabla\Delta Z^j\bigr) 
\\
\label{erro-six1}
&&+ \frac{{3}k}{\varepsilon} \bigl(\nabla [\vert X^j_{\tt CH}\vert^2 Z^j], \nabla \Delta Z^j\bigr) + \rev{\frac{k}{\varepsilon}\|\Delta Z^j\|^2}
\\
& =: & {\tt I}_1 + {\tt I}_2 + {\tt I}_3 + \rev{\frac{k}{\varepsilon}\|\Delta Z^j\|^2} \, .
 \end{eqnarray*}
 We estimate
 \begin{eqnarray*}
 {\tt I}_1 &\leq& \frac{Ck}{\varepsilon^3} {\Vert Z^j\Vert_{{\mathbb L}^{\infty}}^2} {\Vert Z^j \nabla Z^j\Vert^2}
+ \frac{\varepsilon k}{8} \Vert \nabla \Delta Z^j\Vert^2\,, 
\\
 {\tt I}_2 &\leq& \frac{Ck}{\varepsilon^3} \bigl( {\Vert Z^j\Vert^2_{{\mathbb L}^{\infty}}} 
          {\Vert X^j_{\tt CH}\Vert^2_{{\mathbb L}^{\infty}}} \Vert \nabla Z^j\Vert^2 + {\Vert Z^j\Vert^2_{{\mathbb L}^\infty}} 
               \Vert Z^j\Vert^2_{{\mathbb L}^4} {\Vert \nabla X^j_{\tt CH}\Vert^2_{{\mathbb L}^4}}\bigr)
 + \frac{\varepsilon k}{8} \Vert \nabla \Delta Z^j\Vert^2\,,
\\
 {\tt I}_3 &\leq& 
 \frac{Ck}{\varepsilon^3} \bigl({\Vert X^j_{\tt CH}\Vert^4_{{\mathbb L}^{\infty}}}\Vert \nabla Z^j\Vert^2_{{\mathbb L}^{2}} + {\Vert X^j_{\tt CH}\Vert^2_{{\mathbb L}^{\infty}}
 \Vert \nabla X^j_{\tt CH}\Vert^2_{{\mathbb L}^{4}}}
 \Vert Z^j\Vert^2_{{\mathbb L}^4}\bigr)
 + \frac{\varepsilon k}{8} \Vert \nabla \Delta Z^j\Vert^2\, .
 \end{eqnarray*}
We estimate $\sum_{\ell=1}^3{\tt I}_{\ell}$ on $\Omega_{\kappa,j}$ via Lemma~\ref{lem_xch}, \rev{ii)-iii)} and the embedding ${\mathbb H}^1 \hookrightarrow  {\mathbb L}^{4}$ 
\rev{on recalling (\ref{cut1})}
\begin{equation}\label{estiii}
\D \mathbbm{1}_{\Omega_{\kappa,j}} \sum_{\ell=1}^3{\tt I}_{\ell}\leq  \D \mathbbm{1}_{\Omega_{\kappa,j}}\big\{ \frac{\varepsilon k}{2} \Vert \nabla \Delta Z^j\Vert^2 
+\frac{C(1+{\kappa^2})k}{\varepsilon^3} {\Vert Z^j \nabla Z^j\Vert^2}
\D  \D 
+ \frac{C(1+ {\kappa^2}) k}{\varepsilon^{3+2\mathfrak{n}_{\tt CH}}} \Vert \nabla Z^j\Vert^2\big\}\, .
\end{equation}
We multiply (\ref{esti-three}) by $\mathbbm{1}_{\Omega_{\kappa,j}}$, sum up for  $1 \leq i \leq j$, take $\max_{1\leq j \leq J}$ and expectation,
employ the identity \revdim{(recall, $\mathbbm{1}_{\Omega_{\kappa,j-1}} - \mathbbm{1}_{\Omega_{\kappa,j}}\geq 0$)}
 \begin{eqnarray*}\nonumber
&& \frac{1}{2}  {\mathbb E}\Bigl[ {\max_{0\leq j\leq J}\sum_{i=1}^j}\Big( \mathbbm{1}_{\Omega_{\kappa,i}} \bigl( \Vert \nabla Z^j\Vert^2 
- \Vert \nabla Z^{i-1}\Vert^2\bigr) - \rev{\big(\mathbbm{1}_{\Omega_{\kappa,i-1}}\Vert \nabla Z^{i-1}\Vert^2-\mathbbm{1}_{\Omega_{\kappa,i-1}}\Vert \nabla Z^{i-1}\Vert^2\big)}\Big) \Bigr] \\ \label{term_1}
&&\quad = \frac{1}{2} {\mathbb E}\Bigl[ \max_{0\leq j\leq J} \mathbbm{1}_{\Omega_{\kappa,j}}\Vert \nabla Z^j\Vert^2\Bigr]
+ {\frac{1}{2} \sum_{j=1}^J {\mathbb E}\Bigl[ {\bigl(\mathbbm{1}_{\Omega_{\kappa,j-1}} - \mathbbm{1}_{\Omega_{\kappa,j}}\bigr)}
\Vert \nabla Z^{j-1}\Vert^2\Bigr]}\,,
 \end{eqnarray*}
use  Lemmata~\ref{lem_err_l2l4} and \ref{cor_err_xxa} to estimate (\ref{estiii}) and obtain
 \begin{eqnarray}\nonumber
&& \frac{1}{2} {\mathbb E}\Bigl[ {\max_{0\leq j\leq J}} \mathbbm{1}_{\Omega_{\kappa,j}}\Vert \nabla Z^j\Vert^2\Bigr]
+ \frac{1}{2} \sum_{j=1}^J {\mathbb E}\Bigl[ {\bigl(\mathbbm{1}_{\Omega_{\kappa,j-1}} - \mathbbm{1}_{\Omega_{\kappa,j}}\bigr)}
\Vert \nabla Z^{j-1}\Vert^2\Bigr] \\ \label{mark2}
&&\qquad  
+ \frac{1}{2} \sum_{j=1}^J {\mathbb E}\Bigl[\mathbbm{1}_{\Omega_{\kappa,j}} 
\bigl(\Vert \nabla[Z^j - Z^{j-1}]\Vert^2 
+ \varepsilon k \Vert \nabla \Delta Z^j\Vert^2\bigr)\Bigr] \\ \nonumber
\nonumber
&&\quad \leq {\max \Bigl\{ \frac{C(1+\kappa^2)}{\varepsilon^2} {{\mathcal F}_1 \bigl(k, \varepsilon; \sigma_0, \kappa_0, \gamma \bigr)},\frac{C(1+ \kappa^2)}{\varepsilon^{7+ 2{\mathfrak n}_{\tt CH}}} \Bigl(\frac{C}{\varepsilon^{\kappa_0}} \max \bigl\{ \frac{k^2}{\varepsilon^4}, \varepsilon^{\gamma+\frac{\sigma_0+1}{3}}, \varepsilon^{\sigma_0},\varepsilon^{2\gamma}\bigr\}\Bigr)^{\frac{1}{4}}\Bigr\}} \\ \nonumber
&&\qquad +
\eps^\gamma \mathbb{E}\Big[\max_{0\leq j\leq J} \sum_{i=1}^j \mathbbm{1}_{\Omega_{\kappa,i}}\bigl(g,  \rev{-\Delta} Z^{i}\bigr) \Delta_i W\Big]\,.
\end{eqnarray}
To estimate the stochastic term we use {\rev{$\partial_\normal g = 0$} on $\partial {\mathcal D}$}
and proceed as follows,
\begin{eqnarray*}\nonumber 
&& \eps^\gamma \mathbb{E}\Bigl[ \max_{0\leq j \leq J} \sum_{i=1}^j \mathbbm{1}_{\Omega_{\kappa,i}}\bigl(\rev{-\Delta g},  Z^{i}\bigr) \Delta_i W\Bigr]
  = 
\varepsilon^{\gamma} \mathbb{E}\Bigl[ \max_{0\leq j\leq J} \sum_{i=1}^j\Big(  \mathbbm{1}_{\Omega_{\kappa,i}}  {\bigl(\rev{-\Delta g},  Z^i-Z^{i-1}\bigr) \Delta_i W}
\\ \nonumber
&& 
\qquad   + {\mathbbm{1}_{\Omega_{\kappa,i-1}}  \bigl(\nabla \rev{g},  \nabla Z^{i-1}\bigr) \Delta_i W} 
{+ 
{\bigl( \mathbbm{1}_{\Omega_{\kappa,i}} - \mathbbm{1}_{\Omega_{\kappa,i-1}}\bigr)
{\bigl(\nabla g,  \nabla Z^{i-1}\bigr)} \Delta_i W}}\Big)
 \Bigr]
\\ \nonumber
&& \leq 
\frac{\eps^\gamma}{2}\sum_{i=1}^J {\mathbb E}\Bigl[ \Vert Z^i-Z^{i-1}\Vert^2 + \rev{\|\Delta g\|^2}\vert \Delta_j W\vert^2  \Bigr]
+
\varepsilon^{\gamma}\mathbb{E}\Bigl[ \max_{0\leq j\leq J} \sum_{i=1}^j
{\mathbbm{1}_{\Omega_{\kappa,i-1}}  \bigl(\rev{\nabla g},  \nabla Z^{i-1}\bigr) \Delta_i W}  \Bigr]
\\ \label{term_3}
&& 
\qquad + {\frac{1}{4} \sum_{i=1}^J {\mathbb E}\bigl[  
\bigl( \mathbbm{1}_{\Omega_{\kappa,i}} - \mathbbm{1}_{\Omega_{\kappa,i-1}}\bigr)^2 \Vert \nabla Z^{i-1}\Vert^2\bigr]}
 + C\varepsilon^{\rev{2\gamma}} k \sum_{i=1}^J {\mathbb E}\bigl[ \Vert \rev{\nabla g}\Vert^2 \bigr]
\ .
\end{eqnarray*}
The first term on the right-hand side may be bounded by Lemma~\ref{lem_err_l2l4},
the third term is absorbed in the \rev{left-hand} side of (\ref{mark2}),
and for the second term we use the discrete BDG-inequality (Lemma \ref{lembdg})
and Lemma~\ref{cor_err_xxa} to estimate
\begin{eqnarray*}
&&\varepsilon^{\gamma}\mathbb{E}\Bigl[ \max_{0\leq j\leq J} \sum_{i=1}^j
{\mathbbm{1}_{\Omega_{\kappa,i-1}}  \bigl(\nabla \rev{g},  \nabla Z^{i-1}\bigr) \Delta_i W}  \Bigr] \\
&&\qquad \leq 
C \varepsilon^{\gamma}\|\rev{\nabla g}\|_{\mathbb{L}^\infty} \mathbb{E}\Bigl[ k\sum_{i=1}^J  \|\nabla Z^{i-1}\|^2 \Bigr]^{\frac{1}{2}} \leq {\frac{C \eps^{\gamma}}{\varepsilon^2} \Bigl(\frac{C}{\varepsilon^{\kappa_0}} \max \bigl\{ \frac{k^2}{\varepsilon^4}, \varepsilon^{\gamma+\frac{\sigma_0+1}{3}}, \varepsilon^{\sigma_0},\varepsilon^{2\gamma}\bigr\}\Bigr)^{\frac{1}{4}}}\, .
\end{eqnarray*}
\rev{Hence, the statement of the lemma follows from (\ref{mark2}) and the above estimates
 on noting that $(\mathbbm{1}_{\Omega_{\kappa,j}} - \mathbbm{1}_{\Omega_{\kappa,j-1}})^2 = \mathbbm{1}_{\Omega_{\kappa,j-1}} - \mathbbm{1}_{\Omega_{\kappa,j}} \geq 0$.}
\qed
\end{proof}

The $\mathbb{L}^\infty$-estimate in the next theorem is a crucial ingredient to show convergence
to the sharp-interface limit.
\begin{thm}\label{thm_err_linfty}
 Assume {\bf (C$_2$)}. For any \revp{$2<p<3$}, there exists $C\equiv C(p) >0$ such that
\begin{eqnarray*}\label{err_linfty}
{\mathbb E}\Bigl[ {\max_{0\leq j \leq J}} \mathbbm{1}_{{\Omega}_{\kappa,j}}\Vert  Z^j\Vert_{{\mathbb L}^\infty}^p\Bigr]
\leq 
\revp{C\eps^{-\frac{p}{2}} k^{\frac{2-p}{2}} \big(\mathcal{F}_2 (k, \varepsilon; \sigma_0, \kappa_0, \gamma)\big)^{3-p} \big(\mathcal{F}_1 (k, \varepsilon; \sigma_0, \kappa_0, \gamma)\big)^{\frac{p-2}{2}}\,.}
\end{eqnarray*}
\end{thm}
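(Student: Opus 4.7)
The plan is to derive the $\mathbb{L}^\infty$-bound for $Z^j$ from a Gagliardo--Nirenberg interpolation, combined with the already-established ${\mathbb H}^{-1}$/${\mathbb H}^1$ error control from Lemmas~\ref{lem_err_l2l4} and~\ref{lem_err_nablaz} and the higher-moment energy bounds from Lemma~\ref{lem_energy}.

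\emph{Step 1 (Pointwise Gagliardo--Nirenberg).} I would first establish the deterministic inequality
\begin{equation*}
\|Z^j\|_{\mathbb{L}^\infty}^p \leq C\,\|\nabla Z^j\|^{2}\,\|\Delta Z^j\|^{p-2}\qquad(2<p<3)
\end{equation*}
in two space dimensions. This follows from the Sobolev embedding $\mathbb{W}^{1,p}\hookrightarrow \mathbb{L}^\infty$ for $p>2$, Poincar\'e's inequality (since $Z^j$ has zero mean), the Gagliardo--Nirenberg interpolation $\|\nabla Z^j\|_{\mathbb{L}^p}\leq C\|\nabla Z^j\|^{2/p}\|\nabla Z^j\|_{\mathbb{H}^1}^{(p-2)/p}$, and elliptic regularity for the Neumann Laplacian, along the lines of Step~{\bf 2} of the proof of Lemma~\ref{lem_linftybnd}.

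\emph{Step 2 (Factorization and H\"older).} After multiplying by $\mathbbm{1}_{\Omega_{\kappa,j}}$ and taking $\max_{0\leq j\leq J}$, the key algebraic identity
\[
\|\nabla Z^j\|^{2}\,\|\Delta Z^j\|^{p-2}=\bigl(\|\nabla Z^j\|^{2}\bigr)^{3-p}\,\bigl(\|\nabla Z^j\|^{2}\,\|\Delta Z^j\|\bigr)^{p-2}
\]
(valid because $(3-p)+(p-2)=1$) allows me to apply $\max_j$ to each factor separately and then invoke H\"older's inequality with the conjugate exponents $r=1/(3-p)$ and $r'=1/(p-2)$. This yields
\[
\mathbb{E}\!\bigl[\max_j\mathbbm{1}_{\Omega_{\kappa,j}}\|\nabla Z^j\|^2\|\Delta Z^j\|^{p-2}\bigr]
\leq \mathbb{E}\!\bigl[\max_j\mathbbm{1}_{\Omega_{\kappa,j}}\|\nabla Z^j\|^2\bigr]^{3-p}\cdot\mathbb{E}\!\bigl[\max_j\mathbbm{1}_{\Omega_{\kappa,j}}\|\nabla Z^j\|^2\|\Delta Z^j\|\bigr]^{p-2},
\]
and the first factor is immediately bounded by $\mathcal{F}_2^{3-p}$ via Lemma~\ref{lem_err_nablaz}.

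\emph{Step 3 (Mixed factor and hard step).} For the remaining mixed factor I would use $\max_j\|\Delta Z^j\|\leq\bigl(\sum_j\|\Delta Z^j\|^2\bigr)^{1/2}=(\varepsilon k)^{-1/2}\bigl(\varepsilon k\sum_j\|\Delta Z^j\|^2\bigr)^{1/2}$ to produce the negative power of $\varepsilon k$, and then invoke Cauchy--Schwarz with the fourth-moment control
\[
\mathbb{E}\!\bigl[\max_j\|\nabla Z^j\|^4\bigr]\leq C\varepsilon^{-2},
\]
which follows from $\|\nabla Z^j\|^{2}\leq 2\varepsilon^{-1}\bigl(\mathcal{E}(X^j)+\mathcal{E}(X^j_{\tt CH})\bigr)$ together with Lemma~\ref{lem_energy},~iv) and Lemma~\ref{lem_xch},~i), together with the sum bound $\mathbb{E}\bigl[\varepsilon k\sum_j\|\Delta Z^j\|^2\bigr]\leq\mathcal{F}_1$ from Lemma~\ref{lem_err_l2l4}. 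Raising the resulting estimate to the power $p-2$ and consolidating the powers of $\varepsilon$ and $k$ produces the claimed rate $\varepsilon^{-p/2}k^{(2-p)/2}\mathcal{F}_2^{3-p}\mathcal{F}_1^{(p-2)/2}$.

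The main obstacle is precisely this last step: Lemma~\ref{lem_err_nablaz} delivers $\max_j\mathbbm{1}_{\Omega_{\kappa,j}}\|\nabla Z^j\|^2$ only in $L^1(\Omega)$, whereas combining it with $\max_j\|\Delta Z^j\|$ through Cauchy--Schwarz demands a second-moment estimate of the gradient; this is why the higher-moment energy bounds from Lemma~\ref{lem_energy},~iv) are indispensable, and it is here that the $\varepsilon^{-1}$ in the prefactor $\varepsilon^{-p/2}$ enters (besides the $\varepsilon^{-(p-2)/2}$ from $(\varepsilon k)^{-(p-2)/2}$). The restriction $2<p<3$ is sharp for the strategy: it is precisely what guarantees $3-p>0$ and $p-2>0$, so that the H\"older split of Step~2 goes in the correct direction and the factorization $(\|\nabla Z^j\|^2)^{3-p}(\|\nabla Z^j\|^2\|\Delta Z^j\|)^{p-2}$ distributes the available information between the max-in-$j$ control of $\mathcal{F}_2$ and the sum-in-$j$ control of $\mathcal{F}_1$.
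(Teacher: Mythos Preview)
Your proposal is correct and follows essentially the same route as the paper: the Sobolev embedding $\mathbb{W}^{1,p}\hookrightarrow\mathbb{L}^\infty$ combined with the two-dimensional Gagliardo--Nirenberg estimate $\|\nabla Z^j\|_{\mathbb{L}^p}^p\leq C\|\nabla Z^j\|^2\|\Delta Z^j\|^{p-2}$, the H\"older split with exponents $\tfrac{1}{3-p},\tfrac{1}{p-2}$, and then Cauchy--Schwarz together with the fourth-moment energy bound $\mathbb{E}[\max_j\|\nabla Z^j\|^4]\leq C\varepsilon^{-2}$ and the $\mathcal{F}_1$-control on $\varepsilon k\sum_j\|\Delta Z^j\|^2$. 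One small bookkeeping remark: your Step~3 actually produces the exponent $\varepsilon^{-3(p-2)/2}$ rather than $\varepsilon^{-p/2}$, but since $3(p-2)/2<p/2$ for $p<3$ this is a sharper bound and the stated inequality follows.
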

\begin{proof}
We proceed analogically as in step {\bf 2.}~in the proof of Lemma~\ref{lem_linftybnd}. We use the Sobolev and Gagliardo-Nirenberg inequalities, apply H\"older inequality twice;
then use Lemma~\ref{lem_err_nablaz}, Lemma~\ref{lem_err_l2l4}  \revdim{(i.e., $\mathbb{E}\big[\eps k\|\Delta Z^j\|^2\big] \leq C$)}
along with the triangle inequality in combination with Lemma~\ref{lem_xch}~i), Lemma~\ref{lem_energy}~iv) and get for \revp{$2<p<3$} that
\begin{eqnarray*}
\mathbb{E}\Big[{\max_{1\leq j\leq J}} \mathbbm{1}_{{\Omega}_{\kappa,j}} \|Z^j\|_{\mathbb{L}^\infty}^p\Big]
& \leq & \revp{ C \mathbb{E}\Big[\max_{1\leq j \leq J}\mathbbm{1}_{{\Omega}_{\kappa,j}}\|\nabla Z^j\|_{\mathbb{L}^p}^p\Big]
\leq C \mathbb{E}\Big[\max_{1\leq j \leq J}\mathbbm{1}_{{\Omega}_{\kappa,j}} \|\nabla Z^j\|^2\|\Delta Z^j\|^{p-2}\Big]
}
\\
& \leq &
\revp{C \mathbb{E}\Big[\max_{1\leq j \leq J}\mathbbm{1}_{{\Omega}_{\kappa,j}}\|\nabla Z^j\|^{\frac{2(3-p)}{3-p}}\Big]^{3-p}  \mathbb{E}\Big[\max_{1\leq j \leq J}\|\nabla Z^j\|^{\frac{2(p-2)}{p-2}} \|\Delta Z^j\|\Big]^{p-2}}
\\
& \leq &
\revp{C \big(\mathcal{F}_2 (k, \varepsilon; \sigma_0, \kappa_0, \gamma)\big)^{3-p} \mathbb{E}\Big[\max_{1\leq j \leq J}\|\nabla Z^j\|^{4}\Big]^{1/2} (\eps k)^{-\frac{p-2}{2}} \mathbb{E}\Big[\eps k\|\Delta Z^j\|^2\Big]^{\frac{p-2}{2}}}
\\
& \leq &
\revp{ C(\eps k)^{\frac{2-p}{2}} \big(\mathcal{F}_2 (k, \varepsilon; \sigma_0, \kappa_0, \gamma)\big)^{3-p} \big(\mathcal{F}_1 (k, \varepsilon; \sigma_0, \kappa_0, \gamma)\big)^{\frac{p-2}{2}}}
\\
& &
\revp{\eps^{-1} \Big(\max_{1\leq j \leq J}\eps\|\nabla \XCH^j\|^{2} + \mathbb{E}\Big[\eps^2\max_{1\leq j \leq J}\|\nabla X^j\|^{4}\Big]^{1/2}\Big) }
\\
& \leq & \revp{C\eps^{-\frac{p}{2}} k^{\frac{2-p}{2}} \big(\mathcal{F}_2 (k, \varepsilon; \sigma_0, \kappa_0, \gamma)\big)^{3-p} \big(\mathcal{F}_1 (k, \varepsilon; \sigma_0, \kappa_0, \gamma)\big)^{\frac{p-2}{2}}}\,.
\end{eqnarray*}
\qed
\end{proof}

In order to establish convergence to zero {(for $\varepsilon \downarrow 0$)} of the right-hand side in the inequality of the theorem, we  impose again a stronger assumption
than {\bf (C$_2$)}:
\begin{itemize}
\item[{\bf (C$_3$)}] Assume {\bf (C$_2$)}, and that $(\sigma_0, \kappa_0, \gamma)$, and {$k$} satisfy  
\begin{equation}\label{ass-C3}
\lim_{\varepsilon \downarrow 0} \revp{\Bigl[\eps^{-p} k^{2-p} \big(\mathcal{F}_2 (k, \varepsilon; \sigma_0, \kappa_0, \gamma)\big)^{6-2p} 
\big(\mathcal{F}_1 (k, \varepsilon; \sigma_0, \kappa_0, \gamma)\big)^{p-2}]^{\frac{1}{2}}} = 0\,.
\end{equation}
\end{itemize}

{\begin{rem}\label{assu-ok2}
We discuss a strategy to identify admissible {quadruples $(\sigma_0, \kappa_0, \gamma,k)$} which meet assumption {\bf (C$_3$)}: for this purpose, we limit {ourselves} to a discussion
of the leading term inside the maximum which defines
${\mathcal F}_2$ (see Lemma \ref{lem_err_nablaz}), and recall Remark \ref{assu-ok}.
\begin{itemize}
\item[(1)] To meet (\ref{ass-C3})  instead of (\ref{tar1}), we have to ensure that {\em for some \revp{$2<p<3$}}
$$
\eps^{-\frac{p}{2}} k^{\frac{2-p}{2}} \Big(\revp{k^{\frac{2-p}{p}}} \varepsilon^{\revp{\frac{1-3p}{p}}}  
           \bigl\vert \ln (\varepsilon^{1-p} )\bigr \vert^{\frac{2}{p}}\Big)^{3-p}\Big({{\mathcal F}_1(k,\varepsilon; \sigma_0, \kappa_0, \gamma)}\Big)^{\frac{4-p}{2}}
\rightarrow 0 \qquad (\varepsilon \downarrow 0)
$$
and hence
\begin{equation*}
\revp{\Bigl[
\Big({{\mathcal F}_1(k,\varepsilon; \sigma_0, \kappa_0, \gamma)}\Big)^{\frac{4-p}{2}}
\eps^{\revp{-\frac{p}{2}}}\varepsilon^{ \revp{\frac{(1-3p)(3-p)}{p}}}\bigl\vert \ln(\varepsilon^{1-p} ) \bigr\vert^{\frac{2(3-p)}{p}}
\Bigr]^{\revp{\frac{2p}{(2-p)(6-p)}}}} = o(k)\, . 
\end{equation*}
\item[(2)] We may now proceed as in (2) in Remark \ref{assu-ok} to identify proper choices $k = \varepsilon^{{\alpha}}$ ($\alpha >0$)
and \rev{$p = 2+\delta$, for sufficiently small $\delta>0$,} that guarantee (\ref{ass-C3}).
%
\end{itemize}
\end{rem}}

We are now ready \revised{to} formulate the second main result of this paper, which is
convergence in probability of the solution $\{X^j\}_{j=0}^J$ of Scheme~\ref{scheme_time} 
to the solution of the deterministic Hele-Shaw/Mullins-Sekerka problem (\ref{eq:MS})
for $\varepsilon \downarrow 0$, provided that assumption {\bf (C$_3$)} is valid, and
(\ref{eq:MS}) has a classical solution; cf.~Theorem~\ref{cor_sharp} below. The proof rests on 
\begin{enumerate}
\item[a)] the uniform bounds for $\{\mathbbm{1}_{{\Omega}_{\kappa,j}}\Vert Z^j\Vert^p_{{\mathbb L}^{\infty}}\}_{j=1}^J$ (see Theorem \ref{thm_err_linfty}), and the property that ${\lim_{\revised{\eps \downarrow 0}}}\max_{1\leq j\leq J}\mathbb{P}[\Omega_{\kappa,j}] = 1$ (in Lemma~\ref{lem_linftybnd}) for the sequence
$\{ \Omega_{\kappa,j}\}_{j=1}^J \subset \Omega$, and
\item[b)] a convergence result for $\{ X^j_{\tt CH}\}_{j=0}^J$ towards a smooth solution of the Hele-Shaw/Mullins-Sekerka problem in \cite[Section 4]{fp_ifb05}.
\end{enumerate}
For each $\eps\in (0,\eps_0)$ we consider below the piecewise affine interpolant in time of the iterates $\{ X^j\}_{j=0}^J$ of Scheme~\ref{scheme_time} via
\begin{equation}\label{interpol-1}
X^{\eps,k}(t) := \frac{t-t_{j-1}}{k}X^{j} + \frac{t_{j}-t}{k}X^{j-1}\qquad\mathrm{for}\quad t_{j-1}\leq t \leq t_{j} \, .
\end{equation}
Let $\Gamma_{00} \subset {\mathcal D}$ in (\ref{eq:MSe}) be a smooth closed curve, and $(v_{\tt MS}, \Gamma^{\tt MS})$ be a smooth solution of (\ref{eq:MS}) starting from $\Gamma_{00}$, where $\Gamma^{\tt MS} := \bigcup_{0 \leq t \leq T} \{t\} \times \Gamma^{\tt MS}_t$.
Let ${\rm d}(t,{x})$ denote the signed distance function to $\Gamma^{\tt MS}_t$ such that 
${\rm d}(t,{x}) < 0$ in ${\mathcal I}^{\tt MS}_t$, the inside of $\Gamma^{\tt MS}_t$, and ${\rm d}(t, { x})>0$ on ${\mathcal O}^{\tt MS}_t := {\mathcal D} \setminus (\Gamma^{\tt MS}_t \cap {\mathcal I}^{\tt MS}_t)$, the outside of $\Gamma^{\tt MS}_t$. We also define the inside ${\mathcal I}^{\tt MS}$ and the outside ${\mathcal O}^{\tt MS}$, 
$${\mathcal I}^{\tt MS} := \bigl\{ (t, {x}) \in \overline{{\mathcal D}_T}:\, {\rm d}(t,{x}) < 0\bigr\}\,, \qquad
{\mathcal O}^{\tt MS} := \bigl\{ (t, {x}) \in \overline{{\mathcal D}_T}:\, {\rm d}(t,{x}) >0\bigr\}\,. $$
For the numerical solution $X^{\eps,k} \equiv X^{\eps,k}(t,x)$, we denote the zero level set at time $t$ by $\Gamma^{\eps,k}_t$, that is,
$$\Gamma_t^{\eps,k} := \bigl\{ x \in {\mathcal D}:\, X^{\eps,k}(t,x) = 0\bigr\} \qquad (0 \leq t \leq T)\, . $$
We summarize the assumptions needed below concerning the Mullins-Sekerka problem (\ref{eq:MS}).
\begin{itemize}
\item[{\bf (D)}]
Let ${\mathcal D} \subset {\mathbb R}^2$ be a smooth  domain.
There exists a classical solution $(v_{\tt MS},\Gamma^{\tt MS})$ of (\ref{eq:MS}) evolving from $\Gamma_{00} \subset {\mathcal D}$, such that $\Gamma^{\tt MS}_t \subset {\mathcal D}$ for all $t \in [0,T]$.
\end{itemize}
By \cite[Theorem 5.1]{abc}, assumption {\bf (D)} establishes the existence of a family of smooth solutions $\{ u_0^\eps\}_{0 \leq \eps \leq 1}$ which are uniformly bounded in $\varepsilon$ and $(t, x)$, such that if $u^\varepsilon_{\tt CH}$ is the corresponding solution of (\ref{p1}) with $g \equiv 0$, then
\begin{eqnarray*}
{\rm i)}&& \lim_{\varepsilon \downarrow 0} u^{\varepsilon}_{\tt CH}(t,x) =
\left\{\begin{array}{l} +1  \quad \mbox{if } (t,x) \in{\mathcal O}^{\tt MS}\,, \\
-1  \quad \mbox{if } (t,x) \in{\mathcal I}^{\tt MS}\,,
\end{array}\right.
\qquad \mbox{uniformly on compact subsets of } {\mathcal D}_T\, , \\
{\rm ii)}&& \lim_{\varepsilon \downarrow 0} \bigl( \frac{1}{\varepsilon}f(u^\varepsilon_{\tt CH}) -
\varepsilon \Delta u^\varepsilon_{\tt CH}\bigr)(t,x) = v^{\tt MS}(t,x) \quad
\quad \ \mbox{uniformly on } {\mathcal D}_T\, .
\end{eqnarray*}
%
%
The following theorem establishes uniform convergence of
iterates $\{ X^j\}_{j=0}^J$ from Scheme \ref{scheme_time} in probability on
the sets ${\mathcal I}^{\tt MS}$, ${\mathcal O}^{\tt MS}$.
\begin{thm}\label{cor_sharp}
Assume {\bf (C$_3$)} and {\bf (D)}. Let $\{ X^\eps\}_{0 \leq \varepsilon \leq \varepsilon_0}$ in (\ref{interpol-1}) be obtained via Scheme \ref{scheme_time}.
Then
\begin{eqnarray*}
{\rm i)}&&   \lim_{\eps \downarrow 0}\, \mathbb{P}\left[\bigl\{
 \Vert X^{\eps,k} -   1\Vert_{C({\mathcal A})} > \alpha \quad \mathrm{for\,\,all\,\,}  {\mathcal A} \Subset \mathcal{O}^{\tt MS}
\bigr\} \right] = 0 \qquad \forall\, \alpha > 0\, , \\
{\rm ii)}&&   \lim_{\eps \downarrow 0}\, \mathbb{P}\left[\bigl\{
 \Vert X^{\eps,k} +   1\Vert_{C({\mathcal A})} > \alpha \quad \mathrm{for\,\,all\,\,}  {\mathcal A} \Subset \mathcal{I}^{\tt MS}
 \bigr\}\right] = 0 \qquad \, \forall\, \alpha > 0\, .
 \end{eqnarray*}
 \end{thm}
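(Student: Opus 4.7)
The plan is to split the error via the triangle inequality into three contributions: the stochastic numerical correction, the deterministic discretisation error, and the deterministic sharp-interface limit. Fix a compact $\mathcal{A} \Subset \mathcal{O}^{\tt MS}$ and $\alpha>0$, and denote by $X_{\tt CH}^{\eps,k}$ the piecewise affine-in-time interpolant of $\{X_{\tt CH}^j\}_{j=0}^J$. Then pointwise on ${\mathcal{D}_T}$
\begin{equation*}
|X^{\eps,k}(t,x) - 1| \le  |X^{\eps,k}(t,x) - X_{\tt CH}^{\eps,k}(t,x)| + |X_{\tt CH}^{\eps,k}(t,x) - u_{\tt CH}^\eps(t,x)| + |u_{\tt CH}^\eps(t,x) - 1|\,.
\end{equation*}

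For the third summand, assumption {\bf (D)} and the classical sharp-interface result \cite[Thm.~5.1]{abc} recalled above show that $u_{\tt CH}^{\eps}(t,x) \to 1$ uniformly on $\mathcal{A}$ as $\eps \downarrow 0$; hence it is below $\alpha/3$ on $\mathcal{A}$ for sufficiently small $\eps$. For the middle summand, the deterministic $\mathbb{L}^\infty$-error bound from the proof of Lemma~\ref{lem_xch},~iii) (cf.~\cite{CHFP,fp_ifb05}) yields $\max_{1 \le j \le J}\|X_{\tt CH}^j - u_{\tt CH}^\eps(t_j)\|_{\mathbb{L}^\infty} \le \eps^2$; combined with a modulus-of-continuity argument in time on $[t_{j-1},t_j]$ (using H\"older continuity of $u_{\tt CH}^\eps$ and a discrete time-increment bound for $\{X_{\tt CH}^j\}$ based on (\ref{h2_est}) and ${\mathbb H}^2 \hookrightarrow \mathbb{L}^\infty$), this upgrades to $\|X_{\tt CH}^{\eps,k} - u_{\tt CH}^\eps\|_{C(\overline{\mathcal{D}_T})} \to 0$ under the step-size restriction encoded in {\bf (C$_3$)}, and so is also below $\alpha/3$ uniformly on $\mathcal{A}$ for $\eps$ small enough.

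For the first (stochastic) summand, note that on $\Omega_{\kappa,J}$ the piecewise affine time-interpolant satisfies
\begin{equation*}
\|X^{\eps,k} - X_{\tt CH}^{\eps,k}\|_{C(\overline{\mathcal{D}_T})} \le \max_{0 \le j \le J}\|Z^j\|_{\mathbb{L}^\infty}\,,
\end{equation*}
since for $t\in[t_{j-1},t_j]$ convex combination preserves pointwise absolute value. Because $\{\Omega_{\kappa,j}\}_{j=0}^J$ is monotone decreasing, $\mathbbm{1}_{\Omega_{\kappa,J}} \le \mathbbm{1}_{\Omega_{\kappa,j}}$ for every $j\le J$, and Markov's inequality combined with Theorem~\ref{thm_err_linfty} yields for some $2<p<3$,
\begin{equation*}
\mathbb{P}\bigl[\{\max_{0 \le j \le J}\|Z^j\|_{\mathbb{L}^\infty} > \alpha/3\} \cap \Omega_{\kappa,J}\bigr] \le (3/\alpha)^p\, \mathbb{E}\bigl[\max_{0 \le j \le J}\mathbbm{1}_{\Omega_{\kappa,j}}\|Z^j\|_{\mathbb{L}^\infty}^p\bigr] \longrightarrow 0
\end{equation*}
as $\eps \downarrow 0$, by assumption {\bf (C$_3$)}. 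Combined with $\mathbb{P}[\Omega_{\kappa,J}^c] \to 0$ which follows from (\ref{mark1}) and Lemma~\ref{lem_linftybnd}, this gives
\begin{equation*}
\mathbb{P}\bigl[\|X^{\eps,k} - 1\|_{C(\mathcal{A})} > \alpha\bigr] \le \mathbb{P}[\Omega_{\kappa,J}^c] + \mathbb{P}\bigl[\{\max_j\|Z^j\|_{\mathbb{L}^\infty}>\alpha/3\} \cap \Omega_{\kappa,J}\bigr] \to 0\,,
\end{equation*}
which is assertion i). Assertion ii) follows by the same argument on a compact $\mathcal{A}\Subset \mathcal{I}^{\tt MS}$ with $+1$ replaced by $-1$.

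The principal technical obstacle is the passage from the discrete-in-time $\mathbb{L}^\infty$-bound supplied by Theorem~\ref{thm_err_linfty} to the continuous-in-time $C(\overline{\mathcal{D}_T})$-bound required by the statement. While the stochastic part is immediate from convexity of the affine interpolant, controlling the deterministic contribution $\|X_{\tt CH}^{\eps,k}-u_{\tt CH}^\eps\|_{C(\overline{\mathcal{D}_T})}$ on the inter-nodal intervals $[t_{j-1},t_j]$ requires that the time-step $k$, already forced to be polynomially small in $\eps$ by assumption \textbf{(C$_3$)}, is small enough for the (polynomially $\eps^{-1}$-growing) time-derivative bounds on $u_{\tt CH}^\eps$ to still yield a vanishing interpolation error.
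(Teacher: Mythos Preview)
Your proof is correct and the stochastic part (the first summand, handled via $\Omega_{\kappa,J}$, Markov's inequality, Theorem~\ref{thm_err_linfty}, and Lemma~\ref{lem_linftybnd}) is essentially identical to the paper's argument.

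The difference lies in how you treat the deterministic part. You use a three-term split and pass through the continuous solution $u_{\tt CH}^\eps$ as an intermediate, which then forces you to control $\|X_{\tt CH}^{\eps,k}-u_{\tt CH}^\eps\|_{C(\overline{\mathcal{D}_T})}$ on inter-nodal intervals via a modulus-of-continuity argument that you yourself flag as the ``principal technical obstacle''. The paper instead uses a two-term split and invokes \cite[Theorem~4.2]{fp_ifb05} directly, which already asserts that the piecewise affine interpolant $X_{\tt CH}^{\eps,k}$ converges to $\pm 1$ uniformly on compact subsets of $\mathcal{O}^{\tt MS}$, $\mathcal{I}^{\tt MS}$ --- so the time-interpolation issue for the deterministic part is absorbed into that citation and no separate argument is needed. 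Your route is more self-contained in spirit (you rebuild what \cite{fp_ifb05} provides) but leaves the time-increment step only sketched; the paper's route is shorter and avoids the obstacle entirely by quoting the right result.
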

\begin{proof}
We decompose 
$\overline{{\mathcal D}_T} \setminus \Gamma = {\mathcal I}^{\tt MS} \cup {\mathcal O}^{\tt MS}$, and consider
related errors $X^{\varepsilon,k}_{\tt CH} + 1$, $X^{\varepsilon,k}_{\tt CH} - 1$ and $X^{\varepsilon,k} - X^{\varepsilon,k}_{\tt CH}$.

{\bf 1.} By \cite[Theorem 4.2]{fp_ifb05}\footnote{Note that the mesh requirement $k = {\mathcal O}(h^q)$ stated in \cite[Theorem 4.2]{fp_ifb05} does {\em not} apply for the semi-discretization in time of (\ref{p1}) with $g \equiv 0$. In fact, in \cite{fp_ifb05} -- where \revised{the} involved parameters $k,h,\varepsilon$ tend to zero simultaneously -- the given constraint goes back to requirement \cite[Theorem 3.1, 3)]{fp_ifb05} which uses \cite[(3.28)]{fp_ifb05}, where we formally send $h \downarrow 0$ first (with $\mu = \nu=\delta=1$, $N=2$) to address our case.}, the piecewise affine interpolant $X^{\eps,k}$ of   $\{X^j_{\tt CH}\}_{j=0}^J$ satisfies
\begin{eqnarray*}\label{xch_sharp}
{\rm i')}&& X^{\eps,k}_{\tt CH} \rightarrow  +1 \quad 
\mbox{uniformly on compact subsets of } {\mathcal O}^{\tt MS} \qquad (\eps \downarrow 0)\, , \\
{\rm ii')}&& X^{\eps,k}_{\tt CH} \rightarrow  -1 \quad 
\mbox{uniformly on compact subsets of } {\mathcal I}^{\tt MS} \qquad \, (\eps \downarrow 0)\, .
\end{eqnarray*}
{\bf 2.} 
%
Since $\Omega_{\kappa,J} \subset \Omega_{\kappa,j}$ for $1 \leq j\leq J$, 
Theorem~\ref{thm_err_linfty} \rev{and {\bf (C$_3$)}} imply ($2<p < 3$)
$$
{\mathbb E}\bigl[ {\max_{0\leq j \leq J}} \mathbbm{1}_{\Omega_{\kappa,J}}\Vert  Z^j\Vert_{{\mathbb L}^\infty}^p\bigr] \rightarrow 0 \qquad (\varepsilon \downarrow 0)\, .
$$
The discussion around (\ref{mark1}) shows $\lim_{\varepsilon \downarrow 0}
{\mathbb P}[\Omega \setminus \Omega_{\kappa,J}] = 0$.
%
Let $\alpha > 0$. By Markov's inequality
\begin{eqnarray*}\label{linftysharp1}
\D \mathbb{P}\big[\bigl\{\max_{0\leq j\leq J}\|Z^j \|_{\mathbb{L}^\infty}^p \geq \alpha \bigr\}\big]
& \leq &\D \mathbb{P}\big[\big\{\max_{0\leq j\leq J}\|Z^j \|_{\mathbb{L}^\infty}^p \geq \alpha\big\}\cap \Omega_{\kappa,J}  \big] 
+ \mathbb{P}\big[\Omega\setminus\Omega_{\kappa,J} \big]
\\
&\leq & \frac{1}{\alpha}
{\mathbb{E}\Bigl[\D \max_{0\leq j \leq J} \mathbbm{1}_{{\Omega}_{\kappa,J}} \Vert Z^j\Vert_{{\mathbb L}^\infty}^p\Bigr]} + \mathbb{P}\big[\Omega\setminus\Omega_{\kappa,J} \big] \rightarrow 0 \qquad (\varepsilon \downarrow 0)\, .
\end{eqnarray*}
%
%

\rev{The statement then follows by the triangle inequality and part~{\bf 1.}}
\qed
\end{proof}

A consequence of Theorem~\ref{cor_sharp}
is the {convergence in probability} of the zero level set $\{\Gamma^{\eps,k}_t;\, t \geq 0\}$  to the
interface $\Gamma_t^{\tt MS}$ of the Mullins-Sekerka/Hele-Shaw problem (\ref{eq:MS}).
\begin{cor}\label{cor_gamma}
Assume {\bf (C$_3$)} and {\bf (D)}. Let $\{ X^{\eps,k}\}_{0 \leq \varepsilon \leq \varepsilon_0}$ in (\ref{interpol-1}) be obtained via Scheme \ref{scheme_time}. Then
$$
\lim_{\eps\downarrow 0}\mathbb{P}\bigl[ \bigl\{\sup_{(t,x)\in[0,T]\times\Gamma^{\eps,k}_t} \mathrm{dist}(x,\Gamma_t^{\tt MS}) > \alpha \bigr\}\bigr]=0 \qquad \forall \, \alpha > 0\,.
$$
\end{cor}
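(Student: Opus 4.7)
The plan is to reduce the assertion to Theorem~\ref{cor_sharp} via a simple contrapositive argument: every ``bad'' point of $\Gamma^{\eps,k}_t$ that sits at distance $>\alpha$ from $\Gamma^{\tt MS}_t$ must lie in the interior of either ${\mathcal O}^{\tt MS}$ or ${\mathcal I}^{\tt MS}$ at distance $\geq\alpha$ from the Mullins-Sekerka interface, and there $X^{\eps,k}$ vanishes while it should be close to $+1$ or $-1$, respectively. This yields a deviation of size $1$ on a compact subset of ${\mathcal O}^{\tt MS}$ or ${\mathcal I}^{\tt MS}$, which happens only on a set of vanishing probability by Theorem~\ref{cor_sharp}.

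More precisely, for fixed $\alpha>0$ I would introduce the two ``safety'' sets
\[
{\mathcal A}^{\rm out}_\alpha := \bigl\{(t,x)\in\overline{\mathcal D_T}:\,{\rm d}(t,x)\geq\alpha\bigr\},\qquad
{\mathcal A}^{\rm in}_\alpha := \bigl\{(t,x)\in\overline{\mathcal D_T}:\,{\rm d}(t,x)\leq -\alpha\bigr\},
\]
which, thanks to assumption {\bf (D)} (smoothness of $\Gamma^{\tt MS}$ and $\Gamma^{\tt MS}_t\Subset {\mathcal D}$ for all $t\in[0,T]$), are compact and satisfy ${\mathcal A}^{\rm out}_\alpha\Subset\mathcal{O}^{\tt MS}$ and ${\mathcal A}^{\rm in}_\alpha\Subset\mathcal{I}^{\tt MS}$. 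If $\omega$ belongs to the event $\bigl\{\sup_{(t,x)\in[0,T]\times\Gamma^{\eps,k}_t}\mathrm{dist}(x,\Gamma^{\tt MS}_t)>\alpha\bigr\}$, then there exists $(t_0,x_0)$ with $X^{\eps,k}(\omega;t_0,x_0)=0$ and $(t_0,x_0)\in {\mathcal A}^{\rm out}_\alpha\cup{\mathcal A}^{\rm in}_\alpha$. Consequently, pointwise at $(t_0,x_0)$ we have either $|X^{\eps,k}-1|=1$ on ${\mathcal A}^{\rm out}_\alpha$ or $|X^{\eps,k}+1|=1$ on ${\mathcal A}^{\rm in}_\alpha$, which gives the inclusion of events
\[
\bigl\{\sup_{(t,x)\in[0,T]\times\Gamma^{\eps,k}_t}\mathrm{dist}(x,\Gamma^{\tt MS}_t)>\alpha\bigr\}
\subseteq
\bigl\{\|X^{\eps,k}-1\|_{C({\mathcal A}^{\rm out}_\alpha)}>\tfrac{1}{2}\bigr\}
\cup
\bigl\{\|X^{\eps,k}+1\|_{C({\mathcal A}^{\rm in}_\alpha)}>\tfrac{1}{2}\bigr\}.
\]

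Subadditivity of $\mathbb{P}$ and Theorem~\ref{cor_sharp} applied with the two compact sets ${\mathcal A}^{\rm out}_\alpha\Subset\mathcal{O}^{\tt MS}$ and ${\mathcal A}^{\rm in}_\alpha\Subset\mathcal{I}^{\tt MS}$ (and threshold $\tfrac{1}{2}$) then finish the proof. The only point requiring some care is verifying that ${\mathcal A}^{\rm out}_\alpha$ and ${\mathcal A}^{\rm in}_\alpha$ are indeed compactly contained in the corresponding open regions: this follows from the continuity (indeed, smoothness) of $(t,x)\mapsto{\rm d}(t,x)$ provided by assumption~{\bf (D)}, together with the uniform separation of $\Gamma^{\tt MS}_t$ from $\partial\mathcal{D}$ on $[0,T]$. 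No further stochastic ingredient beyond Theorem~\ref{cor_sharp} is needed, so this is essentially a topological postprocessing of the previous convergence result.
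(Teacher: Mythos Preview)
Your argument is correct and is essentially the same as the paper's: the paper likewise introduces the compact sets $\mathcal{A}_{\mathcal{O}}=\mathcal{O}^{\tt MS}\setminus\mathcal{N}_\eta$ and $\mathcal{A}_{\mathcal{I}}=\mathcal{I}^{\tt MS}\setminus\mathcal{N}_\eta$ (your $\mathcal{A}^{\rm out}_\alpha$, $\mathcal{A}^{\rm in}_\alpha$ with $\eta=\alpha$), observes that a zero of $X^{\eps,k}$ outside $\mathcal{N}_\eta$ forces $|X^{\eps,k}\mp 1|=1$ on one of these sets, and then invokes Theorem~\ref{cor_sharp}. Your presentation via the explicit inclusion of events is in fact slightly cleaner than the paper's formulation through the auxiliary set $\Omega_3$.
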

\begin{proof}
{We adapt arguments from the proof of \cite[Theorem 4.3]{fp_ifb05}.

{\bf 1.} For any $\eta \in (0,1)$ we construct an open tubular neighborhood 
$$\mathcal{N}_\eta := \bigl\{ (t,x) \in {\overline{{\mathcal D}_T}}:\, {|{\rm d}(t,x)|} < \eta\bigr\}$$
of width $2\eta$ of the interface $\Gamma^{\tt MS}$
and define compact subsets $$\mathcal{A}_{\mathcal{I}} = \mathcal{I}^{\tt MS}\setminus \mathcal{N}_\eta\,, \qquad 
\mathcal{A}_{\mathcal{O}} = \mathcal{O}^{\tt MS}\setminus \mathcal{N}_\eta\, .$$
Thanks to Theorem~{\ref{cor_sharp}} there exists $\varepsilon_0 \equiv \varepsilon_0(\eta) >0$ such that for all $\varepsilon \in (0,\varepsilon_0)$
it holds that
\begin{equation}\label{proba1}
\begin{split}
&\mathbb{P}\big[ \{|X^{\eps, {k}}(t,x) - 1| \leq \eta\,\,\mathrm{for}\,\, (t,x)\in \mathcal{A}_{\mathcal{O}}\}\big]  \geq  1-\eta\,,\\
&\mathbb{P}\big[\{ |X^{\eps, {k}}(t,x) + 1| \leq \eta\,\,\mathrm{for}\,\, (t,x)\in \mathcal{A}_{\mathcal{I}}\} \big]  \geq  1-\eta\,.
\end{split}
\end{equation}
In addition, for any $t \in [0,T]$, and $x \in \Gamma^{\eps,k}_t$, since $X^\eps(t,x) = 0$, we have
\begin{equation}\label{xint}
\bigl\vert X^{\eps, {k}}(t,x) - 1\vert = \bigl\vert X^{\eps,{k}}(t,x) + 1\vert = 1\, . 
\end{equation}

{\bf 2.} 
We observe that for any $\eta \in (0,1)$
\begin{eqnarray}\nonumber
{\mathbb P}\bigl[ \{ (t, \Gamma^{\varepsilon,k}_t);\, t \in [0,T] \subset
{\mathcal N}_\eta\}\bigr] &=& {\mathbb P}\bigl[ \bigl\{\{ (t,x):\, t \in [0,T], \ X^{\varepsilon, {k}}(t,x) =0\} \subset {\mathcal N}_\eta\bigr\}\bigr] \\
\label{adden-1}
&=& 1- {\mathbb P}\bigl[ \bigl\{ \exists\, (t,x) \in \overline{{\mathcal D}_T} \setminus {\mathcal N}_\eta:\, X^{\varepsilon, {k}}(t,x) = 0\bigr\}\bigr]
\\\nonumber
&:=& 1- {\mathbb P}\bigl[ \widetilde{\Omega}_3\bigr]
\, .
\end{eqnarray}

\rev{On noting (\ref{xint}) we deduce that $\mathbb{P}[\widetilde{\Omega}_3] \leq \mathbb{P}[\Omega_3]$ where}
$$ \Omega_3 := \bigl\{ \exists\, (t,x) \in {\mathcal A}_{\mathcal O} :\, \vert X^{\eps, {k}}(t,x) - 1\bigr\vert > \eta \ \lor \ \exists\, (t,x) \in {\mathcal A}_{\mathcal I} :\, \vert X^{\eps, {k}}(t,x) + 1\bigr\vert > \eta\bigr\}\, .$$
By (\ref{proba1}), it holds for $\eps\in(0,\eps_0)$ that
\begin{eqnarray*} 
1-{\mathbb P}[\widetilde{\Omega}_3]\geq
{\mathbb P}[\Omega \setminus \Omega_3] &=& {\mathbb P}\bigl[\bigl\{ \forall\, (t,x) \in {\mathcal A}_{\mathcal O} :\, \vert X^{\eps, {k}}(t,x) - 1\bigr\vert \leq \eta \\
&& \quad \  \land \ \forall\, (t,x) \in {\mathcal A}_{\mathcal I} :\, \vert X^{\eps, {k}}(t,x) + 1\bigr\vert \leq \eta\bigr\}\bigr] \geq 1- {2}\eta\,.
\end{eqnarray*}

Inserting this estimate into (\ref{adden-1}) yields for all $\varepsilon \in (0,\varepsilon_0)$
\begin{eqnarray*}
\rev{\mathbb{P}\bigl[ \bigl\{\sup_{(t,x)\in[0,T]\times\Gamma^{\eps,k}_t} \mathrm{dist}(x,\Gamma_t^{\tt MS}) \leq \alpha \bigr\}\bigr]}
& \geq &  \mathbb{P}\big[ \{(t,\Gamma^{\eps,k}_t),\,\,t\in[0,T]\}\subset \mathcal{N}_{\eta} \big] 
\\
& \geq & 1-{2}\eta\,,
\end{eqnarray*}
\rev{which holds for any $\alpha \geq \eta $.}
\rev{
The desired result then follows 
on noting that $\eta$ can be chosen arbitrarily small
once we take $\lim_{\eps\downarrow0}$ in the above inequality.}
\qed
}
\end{proof}
}
\begin{rem}\label{rem_sharp1}
The numerical experiments in Section \ref{sec_numer} suggest that the conditions on $\gamma$ and $k$ which are required for
Theorem~\ref{cor_sharp} to hold are too pessimistic; in particular, 
they indicate convergence to the deterministic Mullins-Sekerka/Hele-Shaw problem already for $\gamma=1$, $k =\mathcal{O}(\eps)$.
\end{rem}


\section{{Computational} experiments}\label{sec_numer}

{
The {computational} experiments {are meant to support and complement the theoretical results in the earlier sections}:
\begin{itemize}
\item Convergence to the deterministic sharp-interface limit (\ref{eq:MS}) for the space-time white noise in Section~\ref{sec_sharp_white}.
{We study} pathwise convergence of the white noise-{driven} simulations to the deterministic sharp interface limit, { which is a scenario beyond the one
for regular trace-class noise where Theorem~\ref{cor_sharp} and Corollary~\ref{cor_gamma} establish convergence in probability}.
\item {Pathwise} convergence to the stochastic sharp interface limit (\ref{eq:HS}) (introduced in Section~\ref{sec_stochms} below) for spatially smooth noise in Section~\ref{hsms-1},
where we also examine the sensitivity of numerical simulations with respect to the mesh refinement.
\end{itemize}
}

\subsection{Implementation and adaptive mesh refinement}
For the computations below we employ a mass-lumped  variant of Scheme~\ref{scheme_space_time}
\begin{equation}\label{scheme_lumped}
\begin{split}
&(X^j_h-X^{j-1}_h,\varphi_h)_h+k(\nabla w^{j}_h,\nabla \varphi_h)=\varepsilon^{\gamma}\bigl(g\Delta_j W^h, \varphi_h \bigr)_h
\;\;\;\; \, \, \quad \, \forall\, \varphi_h \in \mathbb{V}_h\,,\\
&\varepsilon(\nabla X^j_h,\nabla \psi_h)+\frac{1}{\varepsilon} \bigl(f(X^j_h),\psi_h \bigr)_h=(w^j_h,\psi_h)_h \quad \qquad \qquad
\quad  \quad \forall\, \psi_h \in \mathbb{V}_h\, ,\\
&X^0_h=u_0^{\eps,h} \in \mathbb{V}_h\, ,
\end{split}
\end{equation}
where the standard $\mathbb{L}^2$-inner product in Scheme~\ref{scheme_space_time} is replaced by  
the discrete (mass-lumped) inner product $(v,w)_h = \int_{\mathcal{D}} \mathcal{I}^h (v(x)w(x))\mathrm{d}x$ for $v,w\in \mathbb{V}_h$,
where $\mathcal{I}^h:C(\overline{\mathcal{D}})\rightarrow \mathbb{V}_h$ 
is the standard interpolation operator.
In all experiments we take $\mathcal{D} = (0,1)^2\subset \mathbb{R}^2$ and $g$ is taken to be a constant.
\rev{We note that an implicit Euler finite element scheme similar to Scheme~\ref{scheme_lumped} has been used previously in \cite{phd_goudenege},
which also performs simulations to study long time behavior of the system for different strengths of the (space-time white) noise with fixed $\varepsilon$.}

{For a given initial interface $\Gamma_{00}$ 
we construct an $\eps$-dependent family of initial conditions $\{u^{\eps}_0\}_{\eps>0}$ as $u^{\eps}_0(\xx) =\tanh (\frac{\mathrm{d}_0(\xx)}{\sqrt{2}\eps})$,
where $\mathrm{d}_0$ is the signed distance function to $\Gamma_{00}$.
Consequently, $\{u^{\eps}_0\}_{\eps>0}$ have bounded energy and contain a diffuse layer of thickness proportional to $\eps$ along $\Gamma_{00}$,
and $u^\eps_0(\xx) \approx  -1$, $u^\eps_0(\xx) \approx  1$ in the interior, exterior of $\Gamma_{00}$, respectively.
The construction ensures that $\int_\mathcal{D} u^\eps_0\, \mathrm{d}x \rightarrow m_0$ for $\eps\rightarrow 0$, 
where $m_0$ is the difference between the respective areas of the exterior and interior of $\Gamma_{00}$ in $\mathcal{D}$.
For convenience we set $u_0^{\eps, h} = \mathcal{I}^h u_0^{\eps}$.
}

The discrete increments $\Delta_j W^h = W^h(t_j) - W^h(t_{j-1})$ in (\ref{scheme_lumped})  are
$\mathbb{V}_h$-valued random variables which approximate the 
\revised{increments}
of a ${\mathcal Q}$-Wiener process on a probability space $(\Omega, \mathcal{F},\mathbb{P})$ which is given by
$$
W(t,x) = \sum_{i=1}^{\infty} \lambda_i e_i(x)\beta_i(t)\, ,
$$
where $\{e_i\}_{i\in \mathbb{N}}$ is an orthonormal basis in $\mathbb{L}^2(\mathcal{D})$, $\{\beta_i\}_{i\in\mathbb{N}}$ 
are independent real-valued Brownian motion, and $\{\lambda_i\}_{i\in\mathbb{N}}$ are
real-valued coefficients such that $\mathcal{Q}e_i = \lambda_i^2e_i$, $i\in \mathbb{N}$. 
In order to preserve mass the noise is required to satisfy $\mathbb{P}$-a.s. $\int_\mathcal{D} W(t,x) \,\revised{{\rm d}x} = 0$, $t\in [0,T]$.

In the experiments below we consider two types of Wiener \revised{processes}:
a smooth (finite dimensional) noise and a $\mathbb{L}^2_0$-cylindrical Wiener process (space-time white noise).
The smooth noise is given by
$$
\displaystyle \Delta_j \widehat{W}(t,x) = \frac{1}{2}\sum_{k,\ell=1}^{64}\cos(2\pi kx_1)\cos(2\pi \ell x_2)\Delta_j \beta_{k\ell}\qquad x=(x_1,x_2)\in [0,1]^2\,,
$$
where $\Delta_j \beta_{k\ell} = \beta_{k\ell}(t_j)-\beta_{k\ell}(t_{j-1})$ are independent scalar-valued Brownian increments.
The discrete approximation of the smooth noise is then constructed as
\begin{equation}\label{noise_n64}
\displaystyle \Delta_j W^h(\xx) = \sum_{\ell=1}^{L} \Delta_j \widehat{W}(\xx_\ell)\phi_\ell(\xx),
\end{equation}
where $\phi_\ell (\xx_m) = \delta_{\ell m}$, $\ell=1,\dots,  L$ are the (standard) nodal basis function of $\mathbb{V}_h$, i.e., $\mathbb{V}_h = \mathrm{span}\{\phi_\ell, \, \ell=1, \dots, L\}$.
The space-time white noise ($\mathcal{Q} = I$) is approximated as (cf. \cite{book_bbnp14})
\begin{equation*}
\Delta_j \widetilde{W}^h(\xx)
   =  \sum_{\ell=1}^{L} \frac{\phi_\ell (\xx)}{\sqrt{\frac{1}{3}|\mathrm{supp} \, \phi_\ell |}} \Delta_{j} {\beta}_\ell \qquad \forall\, 
\xx \in \overline{\mathcal D}\subset \mathbb{R}^2\,.
\end{equation*}
In order to preserve the zero mean value property of the noise we normalize the increments as
\begin{equation}\label{whitenoise_mean}
\Delta_j W^h = \Delta_j \widetilde{W}^h - \frac{1}{|\mathcal{D}|}\int_\mathcal{D} \Delta_j \widetilde{W}^h\, \revised{{\rm d}x} .
\end{equation}

{The Wiener process is simulated using a standard Monte-Carlo technique,
i.e., for $\omega_m \in \Omega$, $m=1, \dots, M$, 
we approximate the Brownian increments in (\ref{noise_n64}),(\ref{whitenoise_mean}) as
$\Delta_j \beta_\ell (\omega_m) \approx \sqrt{k} \mathcal{N}_\ell^j(0,1)(\omega_m)$}, 
where $\mathcal{N}_\ell^j(0,1)(\omega_m)$ is a realization of the Gaussian random number generator at time level $t_j$.
The discrete nonlinear systems related to (realizations of) the scheme (\ref{scheme_lumped})
are solved using the Newton method with a multigrid linear solver.

To increase the efficiency of the computations we employ a pathwise mesh refinement algorithm.
For a realization $X_{h,m}^{j}:=X_h^{j}(\omega_m)$, $\omega_m\in \Omega$
of the $\mathbb{V}_h$-valued random variable $X_h^{j}$
we define $\eta_{grad}(x) = \max\{|\nabla X_{h,m}^{j}(x)|, |\nabla X_{h,m}^{j-1}(x)|\}$ 
and refine the finite element mesh in such a way
that $h(x) = h_{\mathrm{min}}$ if $\eps \eta_{grad}(x) \geq 10^{-2}$
and $h(x) \approx h_{\mathrm{max}}$ if $\eps \eta_{grad}(x) \leq 10^{-3}$; the mesh 
produced at time level $j$ is then used for the computation of $X_{h,m}^{j+1}$.
The adaptive algorithm produces meshes
with mesh size $h = h_{\mathrm{min}}$ along the interfacial area
and $h \approx h_{\mathrm{max}}$ in the bulk where $u \approx \pm 1$, see
Figure~\ref{fig_2circ_conv} for a typical adapted mesh.
In our computations we choose $h_{\mathrm{max}} = 2^{-6}$
and $h_{\mathrm{min}} = \frac{\pi}{4}\eps$, i.e. $h_{\mathrm{min}} = h_{\mathrm{max}}$
for $\eps \geq 1/(16\pi)$ and $h_{\mathrm{min}}$ scales linearly for smaller values of $\eps$.

{In the presented simulations, mesh refinement
did not appear to significantly influence the asymptotic behavior of the numerical solution. This
is supported by comparison with additional numerical simulation on uniform meshes. 
The observed robustness of numerical simulations with respect to the mesh refinement can be explained
by the fact that the asymptotics are determined by pathwise properties of the solution on a large probability set.
{This conjecture is supported by the convergence in probability in Theorem~\ref{cor_sharp} and Corollary~\ref{cor_gamma}}.
In the present setup the (possible) bias due to the pathwise adaptive-mesh refinement did not have significant impact on the results.
In general, the use of adaptive algorithms with rigorous control of weak errors may be a preferable approach, cf. \cite{ps19}}.
\subsection{Stochastic Mullins-Sekerka problem and its discretization}\label{sec_stochms}

We consider the following stochastic modification of \revised{the} Mullins-Sekerka problem (\ref{eq:MS})
\begin{subequations}\label{eq:HS}
\begin{alignat}{2}
- \Delta\, v\, \mathrm{d}t & = g\, \mathrm{d}W && \qquad \mbox{in }\ 
\mathcal D \setminus \Gamma_t\,,\label{eq:HSa} \\
\left[\partial_{\hsnormal} v \right]_{\Gamma_t} & 
= - 2\,\mathcal{V}
&& \qquad \mbox{on }\ \Gamma_t\,, \label{eq:HSb} \\
v & = \alpha\,\varkappa && \qquad \mbox{on }\ \Gamma_t\,,
\label{eq:HSc} \\
\partial_{\normal} v & = 0 &&
\qquad \mbox{on } \partial \mathcal D\,,
\label{eq:HSd}\\
\Gamma_0 & = \Gamma_{00} \,. \label{eq:HSe} 
\end{alignat}
\end{subequations}
We note that the only difference between (\ref{eq:MS})
and (\ref{eq:HS}) is in the equations (\ref{eq:MSa}), (\ref{eq:HSa}), respectively.
Alternatively equation (\ref{eq:HSa}) can be stated in an integral form as
$$
-\int_0^t \Delta v\, \revised{{\rm d}s} = g\int_0^t \mathrm{d}W \qquad \mathrm{in}\quad \mathcal{D}\setminus \Gamma_t.
$$

For the approximation of the stochastic Mullins-Sekerka problem (\ref{eq:HS}), we adapt the unfitted finite
element approximation for the deterministic problem (\ref{eq:MS}) from \cite{dendritic}. 
In particular, let $\Gamma^{j-1}$ be a polygonal approximation of the interface $\Gamma$ at time $t_{j-1}$,
parameterized by $\rnvec{Y}^{j-1}_h \in [\mathbb{V}_h(I)]^2$, 
where $I = \RZ$ is the periodic unit \revised{interval}, and where $\mathbb{V}_h(I)$ is the space
of continuous piecewise linear finite elements on $I$ with uniform mesh size
$h$.
Let $\pi^h:C(I) \to \Wh$ be the standard nodal interpolation operator,
and let $\langle\cdot,\cdot\rangle$ denote the $L^2$--inner product on $I$,
with $\langle\cdot,\cdot\rangle_h$ the corresponding mass-lumped inner product.
Then we find 
$v_h^{j} \in \mathbb{V}_h$, $\rnvec {Y}^{j}_h \in \Vh$ and 
$\kappa^{j}_h \in \Wh$ such that
\begin{subequations}\label{eq:HSnum}
\begin{align}
& k\,(\nabla\,v^{j}_h, \nabla\,\varphi_h) - 2\,
\left\langle \pi^h\left[
{\rnvec{Y}^{j}_h-\rnvec{Y}^{j-1}_h} \,.\,\rnvec\nu^{j-1}_h \right], 
\varphi_h \circ \rnvec{Y}^{j-1}_h\,|[\rnvec{Y}^{j-1}_h]_\rho| \right\rangle = \bigl(g\Delta_j W^h, \varphi_h \bigr)_h 
\nonumber \\ & \hspace{8cm}
\qquad \forall\ \varphi_h \in \mathbb{V}_h
\,, \label{eq:LBa}\\
& \langle v^{j}_h, \chi_h \,|[\rnvec{Y}^{j-1}_h]_\rho| \rangle - \alpha\,\langle \kappa^{j}_h, \chi_h \,|[\rnvec{Y}^{j-1}_h]_\rho| \rangle_h = 0
\qquad \forall\ \chi_h \in \Wh \,, \label{eq:LBb} \\
& \langle \kappa^{j}_h\,\rnvec\nu^{j-1}_h, \rnvec\eta_h 
\,|[\rnvec{Y}^{j-1}_h]_\rho| \rangle_h + 
\langle [\rnvec{Y}^{j}_h]_\rho, [\rnvec\eta_h]_\rho \,|[\rnvec{Y}^{j-1}_h]_\rho|^{-1} 
\rangle = 0 \qquad \forall\ \rnvec\eta_h \in \Vh
\,. \label{eq:LBc} 
\end{align}
\end{subequations}
In the above, $\rho$ denotes the parameterization variable, so that  $|[\rnvec{Y}^{j-1}]_\rho|$ is the length element on $\Gamma^{j-1}$
and $\rnvec\nu^{j-1}_h \in \Vh$ is a nodal discrete normal vector,
see \cite{dendritic} for the precise definitions. 

\subsection{Convergence to the deterministic sharp-interface limit}\label{sec_sharp_white}

\subsubsection{One circle} \label{sec:onecircle}
We set $\gamma=1$, $g = 8\pi$ and consider the discrete space-time white noise (\ref{whitenoise_mean}).
We note that the considered space-time white noise does not satisfy the smoothness assumptions required for the theoretical part of the paper
(i.e., $\gamma > 1$ and $\mathrm{tr}(\Delta\mathcal{Q}) < \infty$), however the numerical results indicate
that for \revised{$\eps \downarrow 0$} the computed evolutions still converge to the deterministic Mullins-Sekerka problem (\ref{eq:MS}).

The numerical studies below are performed using the scheme (\ref{scheme_lumped}) with adaptive mesh refinement.
The time-step size for $\eps = 2^{-i}/(64\pi)$, $i=0, \dots,4$ was
$k_i=2^{-i}10^{-5}$. The motivation of the different choice of the time-step is
to eliminate possible effects of numerical damping and to ensure the convergence
of the Newton solver for smaller values of $\eps$.


For each $\eps$ we use the initial condition $u^{\eps,h}_0$ that approximates a circle with radius $R=0.2$.
Since circles are stationary solutions of the deterministic Mullins-Sekerka problem, the
convergence of the numerical solution for the stochastic Cahn-Hilliard equation
to the solution of the Mullins-Sekerka problem
for \revised{$\eps\downarrow 0$} can be determined by measuring the deviations
of the zero level-set of the solution $X_h^j$, $j=1,\dots, J$ from the circle with radius $R=0.2$
for a sufficiently large computational time.
We note that the zero level-set of the initial condition $u^{\eps,h}_0$ above, exactly approximates the corresponding stationary solution of the Mullins-Sekerka problem,
but it is not a stationary solution of the corresponding (discrete) deterministic Cahn-Hilliard equation, i.e.,  of (\ref{scheme_lumped}) with 
\revised{$g\equiv0$}.
In order to obtain the optimal phasefield profile across the interfacial
region, we let $u^{\eps,h}_0$ relax towards the discrete stationary state by computing with (\ref{scheme_lumped}) for \revised{$g\equiv0$} for a short time 
and then use that discrete solution as the actual initial condition for the subsequent simulations.

The results in Figure~\ref{fig_epsdw} indicate that for decreasing
$\eps$ the evolution of the zero level set of the numerical solution approaches the solution of the deterministic Mullins-Sekerka model,
which is represented by the stationary circle with radius $0.2$. We observe that the deviations
of the interface from the circle are decreasing for smaller $\eps$.
\begin{figure}[!htp]
\center
\includegraphics[width=0.6\textwidth]{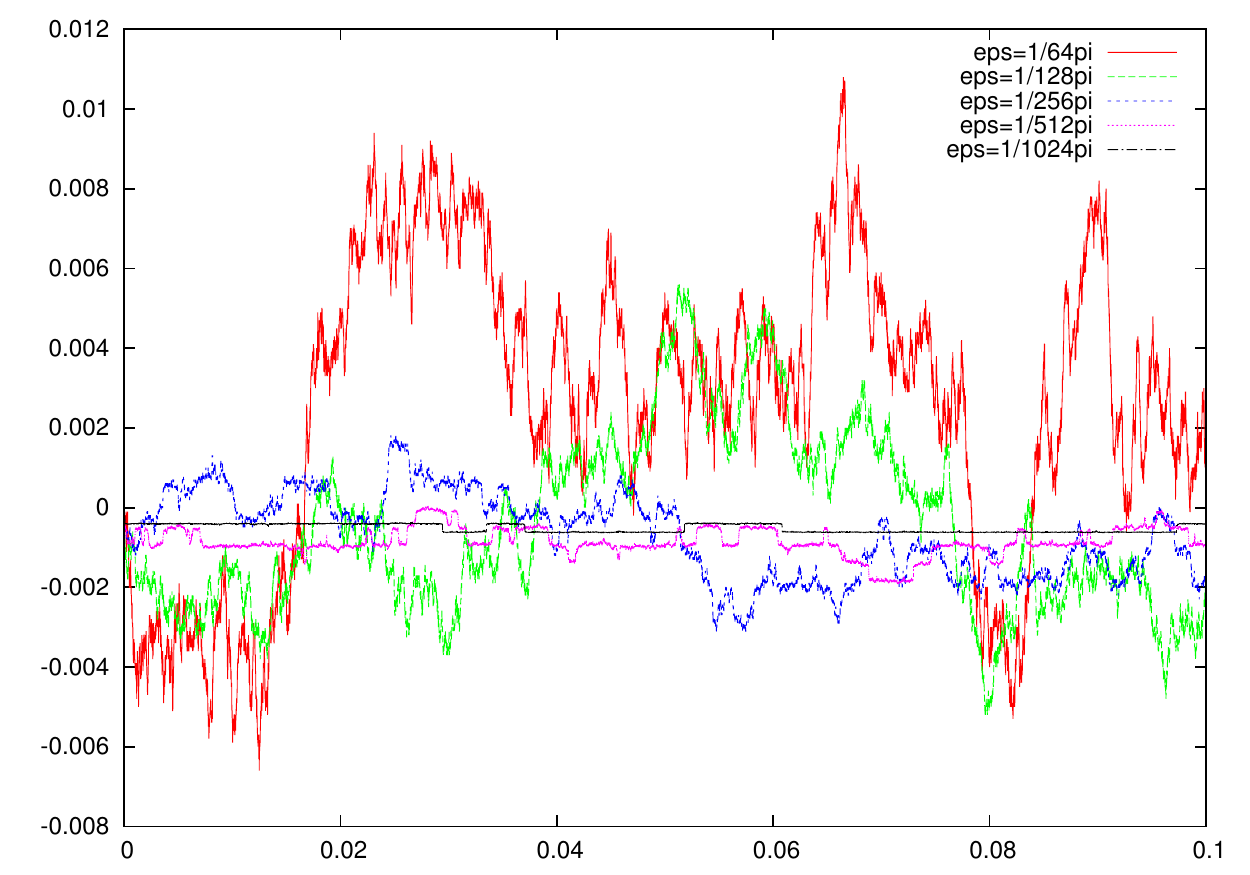}
\caption{Deviation of the interface along the $x$-axes from the circle for $\eps = 2^{-i}/(64\pi)$, $i=0, \dots,4$.}
\label{fig_epsdw}
\end{figure}

\subsubsection{Two circles}

In this experiment we consider the same setup as in the previous one
with an initial condition which consists of two circles with radii $R_1=0.15$ and $R_2=0.1$, respectively.
The evolution of the solution is more complex than in the previous experiment as the interface undergoes a topological change.
To minimize the Ginzburg-Landau energy,
the left (larger) circle grows, 
the right (smaller) circle shrinks and the resulting steady state is 
a single circle with mass equal to the mass of the two initial circles; 
see Figure~\ref{fig_2circ} for an example of a deterministic evolution with $\eps=1/(512\pi)$.
\begin{figure}[!htp]
\center
\includegraphics[width=0.25\textwidth]{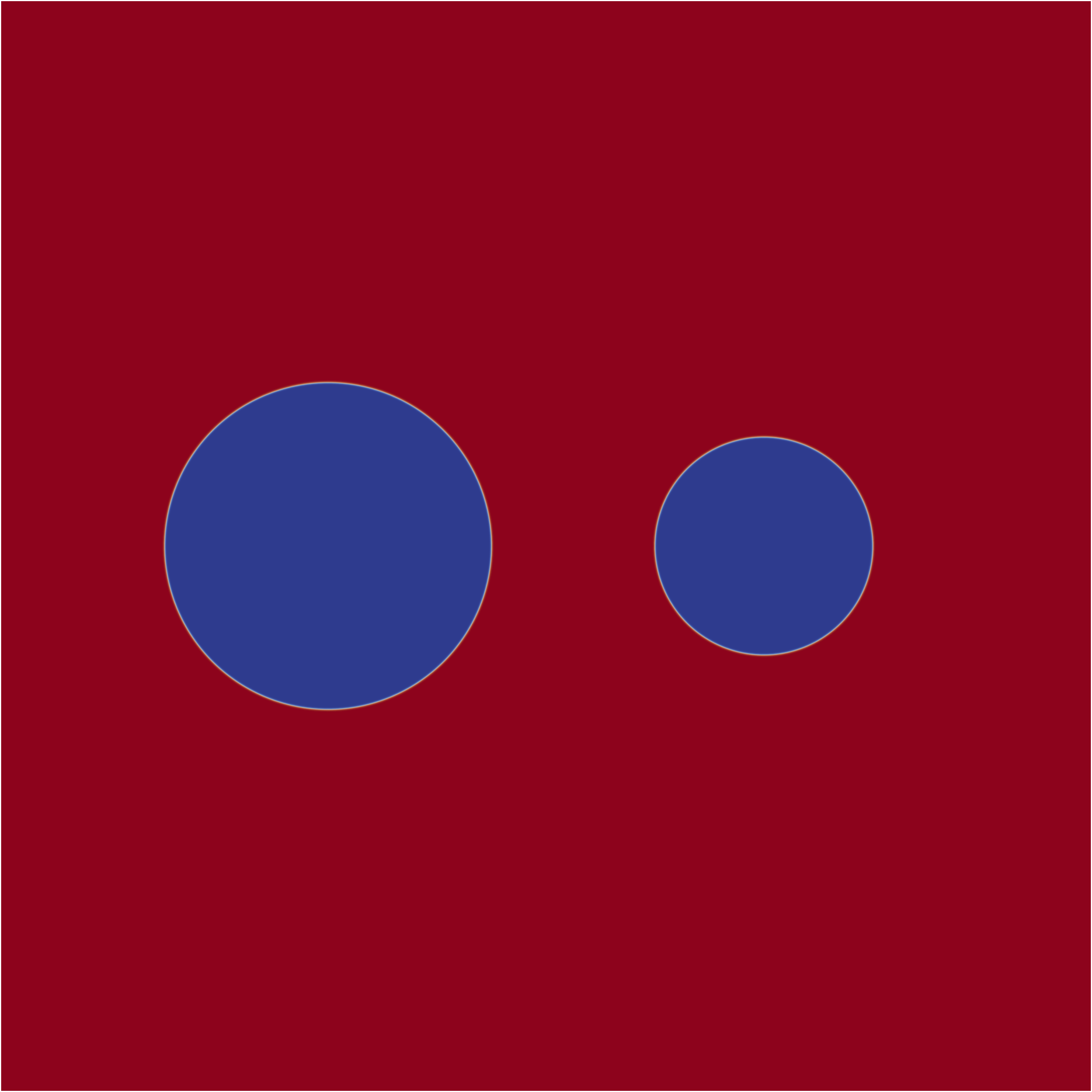}
\includegraphics[width=0.25\textwidth]{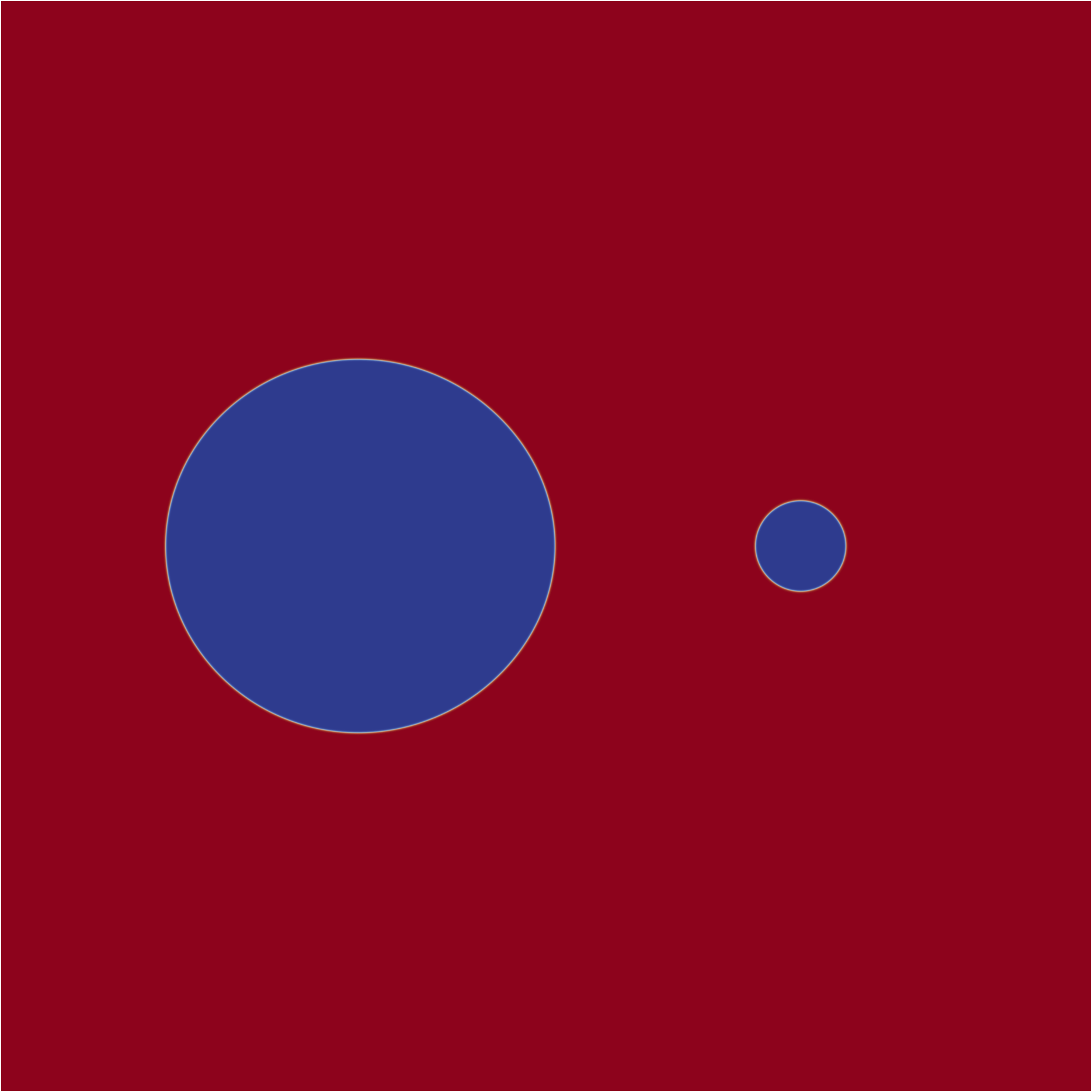}
\includegraphics[width=0.25\textwidth]{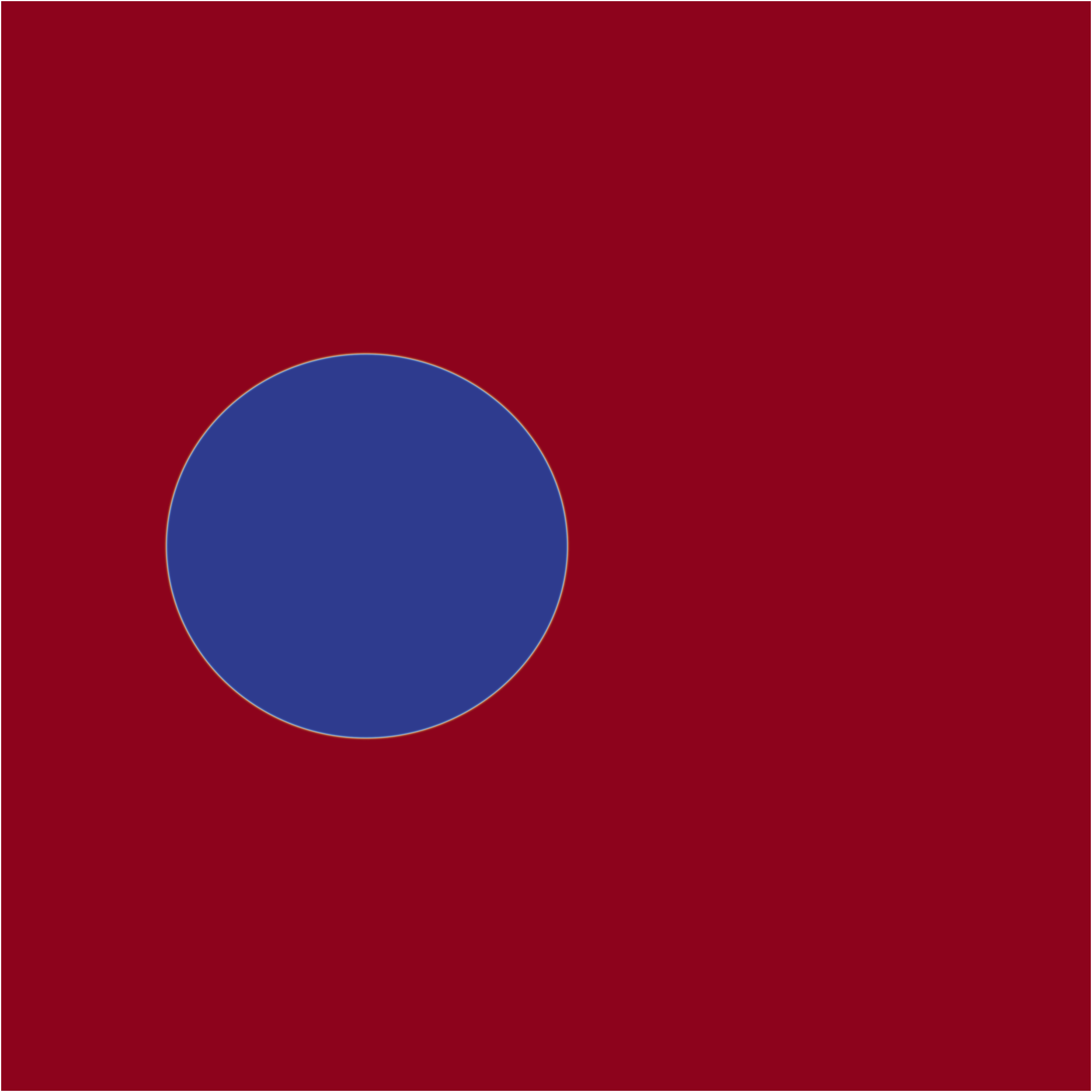}
\caption{Numerical solution for $\eps=1/(512\pi)$ at time $t=0,0.007,0.008$.}
\label{fig_2circ}
\end{figure}
In Figure~\ref{fig_2circ_conv} we display the graph of the evolution of the position of the $x$-coordinate of rightmost point
of the interface along the $x$-axis (i.e., we consider the rightmost point
on the right (smaller) circle and after the right circle disappears we track the rightmost point of the left circle)
for the deterministic Cahn-Hilliard equation as well as for typical realizations of the stochastic Cahn-Hilliard equation
for decreasing values of $\eps$, and of the deterministic Mullins-Sekerka problem.
Here the evolutions for the Mullins-Sekerka problem were computed with the scheme
\eqref{eq:HSnum} in the absence of noise.
We observe that the solution of the stochastic Cahn-Hilliard equation with the scaled space-time white noise (\ref{whitenoise_mean})
converges to the solution of the deterministic Mullins-Sekerka problem for decreasing values of the interfacial width parameter.
In addition, the differences between the the stochastic and the deterministic evolutions of the Cahn-Hilliard equation diminish for decreasing values
of $\eps$.
\begin{figure}[!htp]
\center
\includegraphics[width=0.5\textwidth]{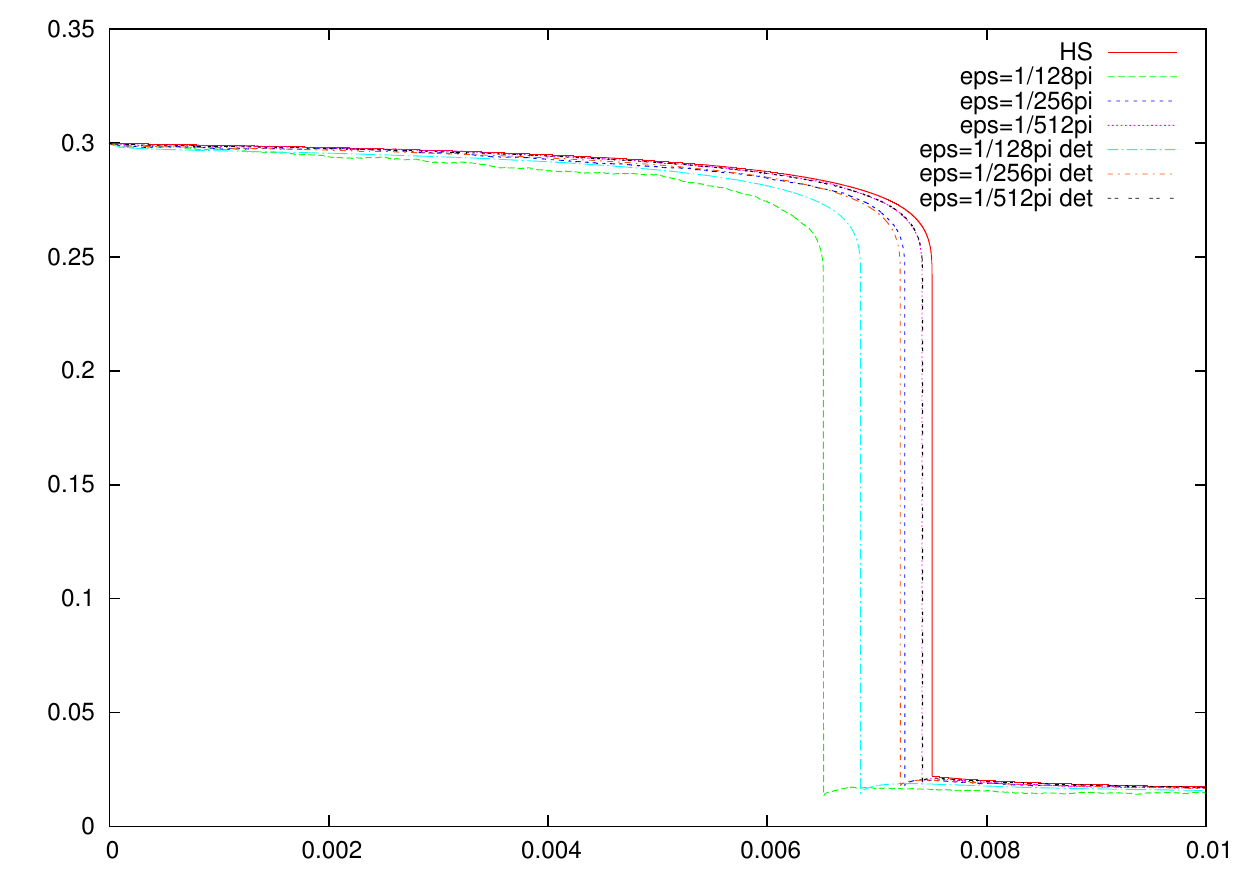}
\includegraphics[width=0.35\textwidth]{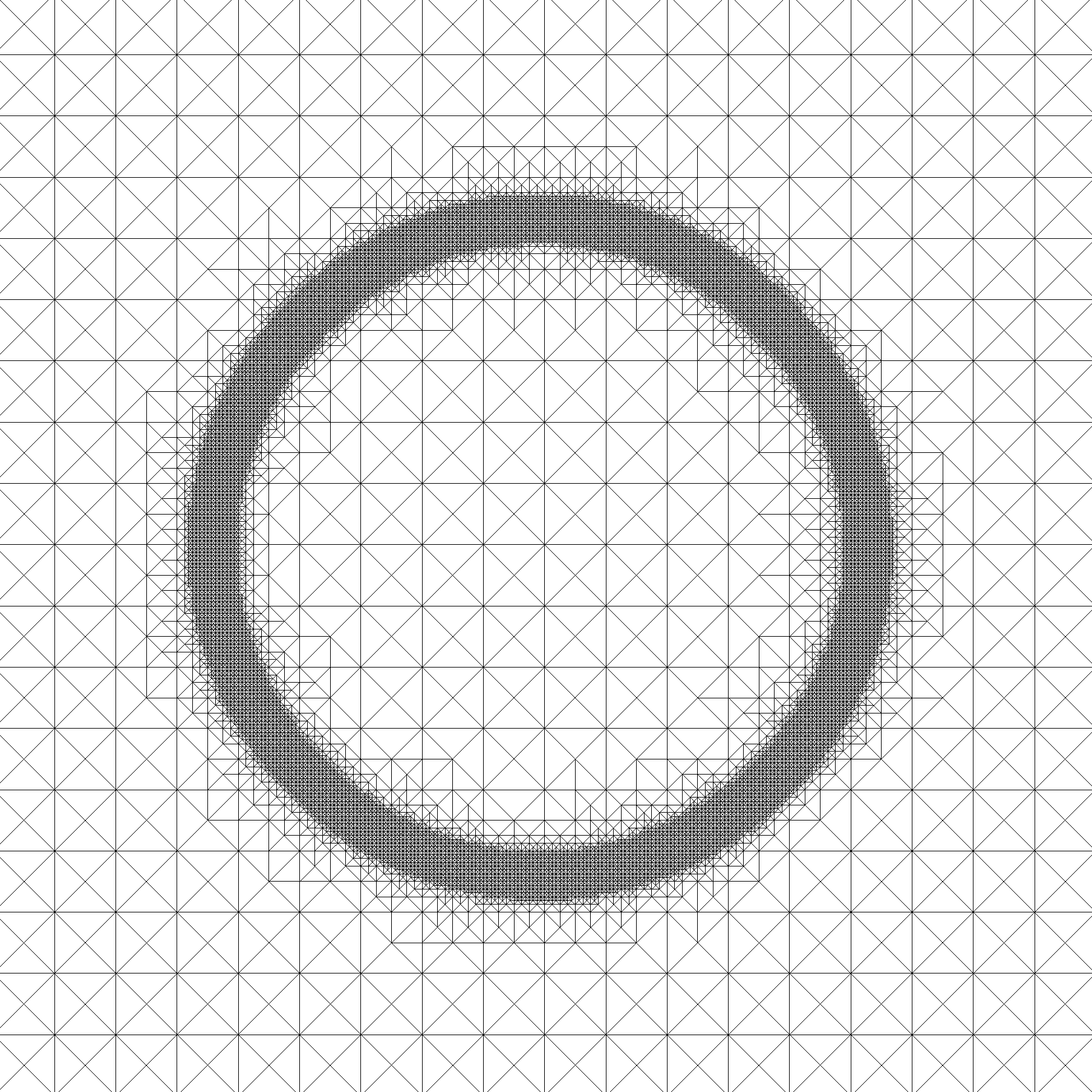}
\caption{(left) Position of the rightmost point of the interface for the stochastic
and the deterministic Cahn-Hilliard equations with $\eps = 2^{-i}/(64\pi)$, $i=0, \dots,4$, $\gamma=1$ 
and the deterministic Mullins-Sekerka problem; the values are shifted by $-0.5$. 
(right) Zoom on the adapted mesh around the smaller circle for $\eps=1/(512\pi)$ at $t=0.007$.}
\label{fig_2circ_conv}
\end{figure}

\subsection{Comparison with the stochastic Mullins-Sekerka model}\label{hsms-1}

We use the numerical scheme (\ref{scheme_lumped})
to study the case of non-vanishing noise, i.e., $\gamma=0$, with the discrete approximation of the smooth noise (\ref{noise_n64}).
The noise is symmetric across the center of the domain in order to facilitate an easier
comparison with the Mullins-Sekerka problem.
{The computations below are pathwise, i.e.,
in the graphs below we display results computed for a single realization of the Wiener process.}
If not mentioned otherwise we use the time-step size $k=10^{-5}$.

The initial condition is taken to be the $\eps$-dependent approximation of a circle with radius $R=0.2$ as in \S\ref{sec:onecircle}.
In the computations, as before, we first let the initial condition relax to a stationary state 
and then use the stabilized profile $X^{0}_{h} := X^{j_{s}}_{h}$ as an initial condition
for the computation.
The zero level-set of the stationary solution $X^{j_s}_{h}$ is a circle with perturbed radius $R=0.2+\mathcal{O}(\eps)$,
where in general the perturbation $\mathcal{O}(\eps)$ also depends on the finite element mesh.
To compensate for the effect of the perturbation in the initial condition for larger values of $\eps$ we represent the interface
by a level set $\Gamma_{u_{\Gamma}}^j := \{\xx \in \mathcal{D};\, X^j_h(x) = u_{\Gamma}\}$
(i.e., $\Gamma_{0}^j$ is the zero level set of the discrete solution at time level $t_j$)
where the values $u_{\Gamma} = X^{j_{s}}(0.2,0)$, i.e., it is the ''compensated''
level-set for which the stationary profile $\Gamma_{u_{\Gamma}}^{j_{s}}$ coincides with the circle with radius $R=0.2$.
The usual value for the ''compensated'' level-set was $u_{\Gamma} \approx 0.27$ in the computations below.

We observe that in order to properly resolve the spatial variations of the noise
it is necessary to use \revised{a} mesh size smaller or equal to $h_{\mathrm{max}} = 2^{-7}$ for the discretization
of the Cahn-Hilliard equation. The computations for the Mullins-Sekerka problem,
using the scheme \eqref{eq:HSnum}, 
were more sensitive to the mesh size, and an accurate resolution for the considered noise required
\revised{a}
mesh size $h_{\mathrm{max}} = 2^{-8}$, cf.~Figure~\ref{fig_cmp_n64_uniform} which includes the results
for $h_{\mathrm{max}} = 2^{-8}$ as well as $h_{\mathrm{max}} = 2^{-7}$.

In Figure~\ref{fig_cmp_n64_uniform} we compare the evolution  for the stochastic Cahn-Hilliard equation
for $\eps = 1/(32\pi)$, $\eps = 1/(64\pi)$
on a uniform mesh with $h = 2^{-7}$, 
$h = 2^{-8}$, respectively, with the evolution of the stochastic Mullins-Sekerka problem (\ref{eq:HS})
on uniform meshes with $h = 2^{-7}$, $h = 2^{-8}$, respectively, for a single realization of the noise.
We also include results for $\eps = 1/(128\pi)$, $\eps = 1/(512\pi)$,
where to make the computations feasible we employ the adaptive algorithm with $h_{\mathrm{max}}=2^{-8}$
and $h_{\mathrm{max}}=2^{-9}$, $h_{\mathrm{max}}=2^{-11}$, respectively.
Furthermore, in order to ensure convergence of the Newton solver
for $\eps = 1/(512\pi)$ we decrease the time-step size $k=10^{-6}$.
To be able to directly compare with the results for $\eps = 1/(512\pi)$, 
we take the values of the realization of the noise generated with step size $k=10^{-5}$,
which was used in the other simulations,
and to obtain values at the intermediate time levels we employ linear interpolation in time.
\begin{figure}[!htp]
\center
\includegraphics[width=0.48\textwidth]{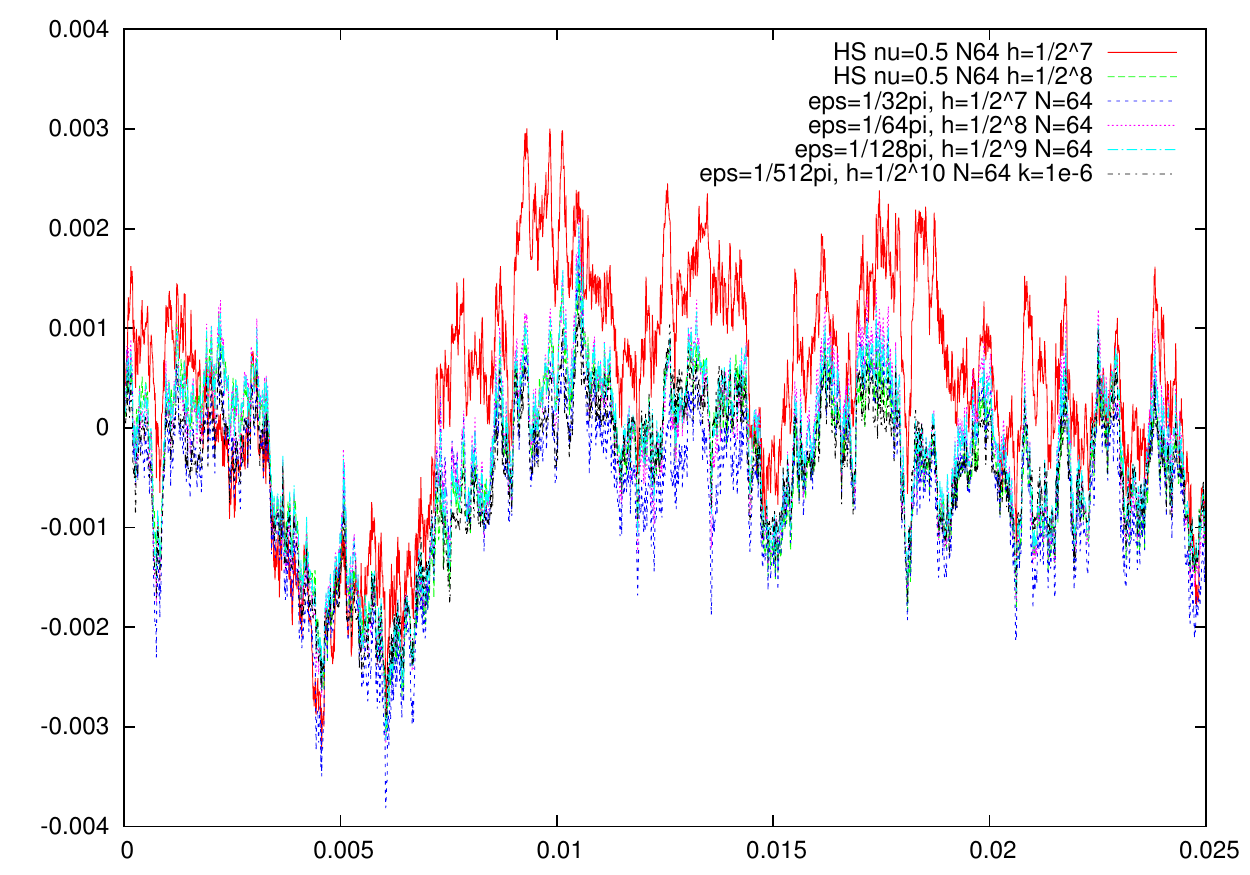}
\includegraphics[width=0.48\textwidth]{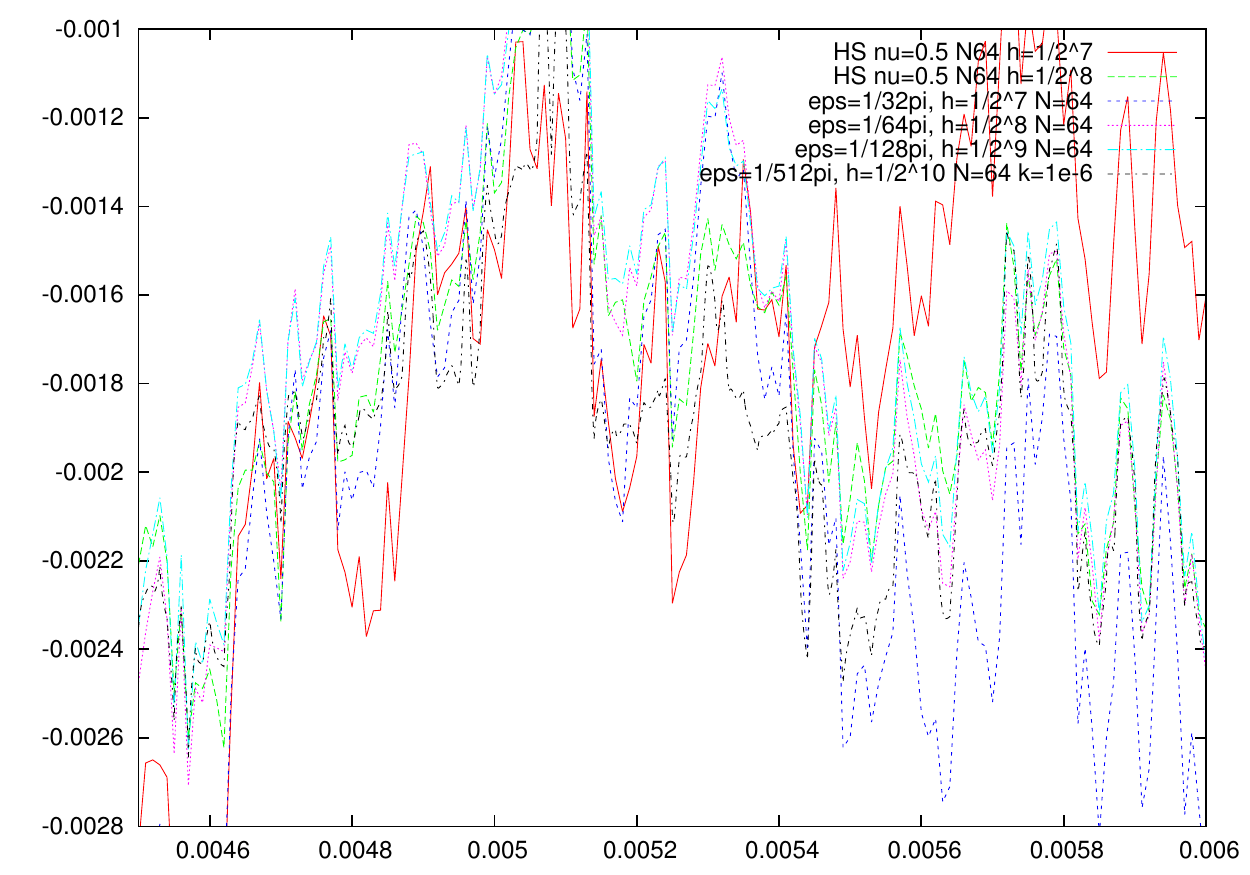}
\includegraphics[width=0.48\textwidth]{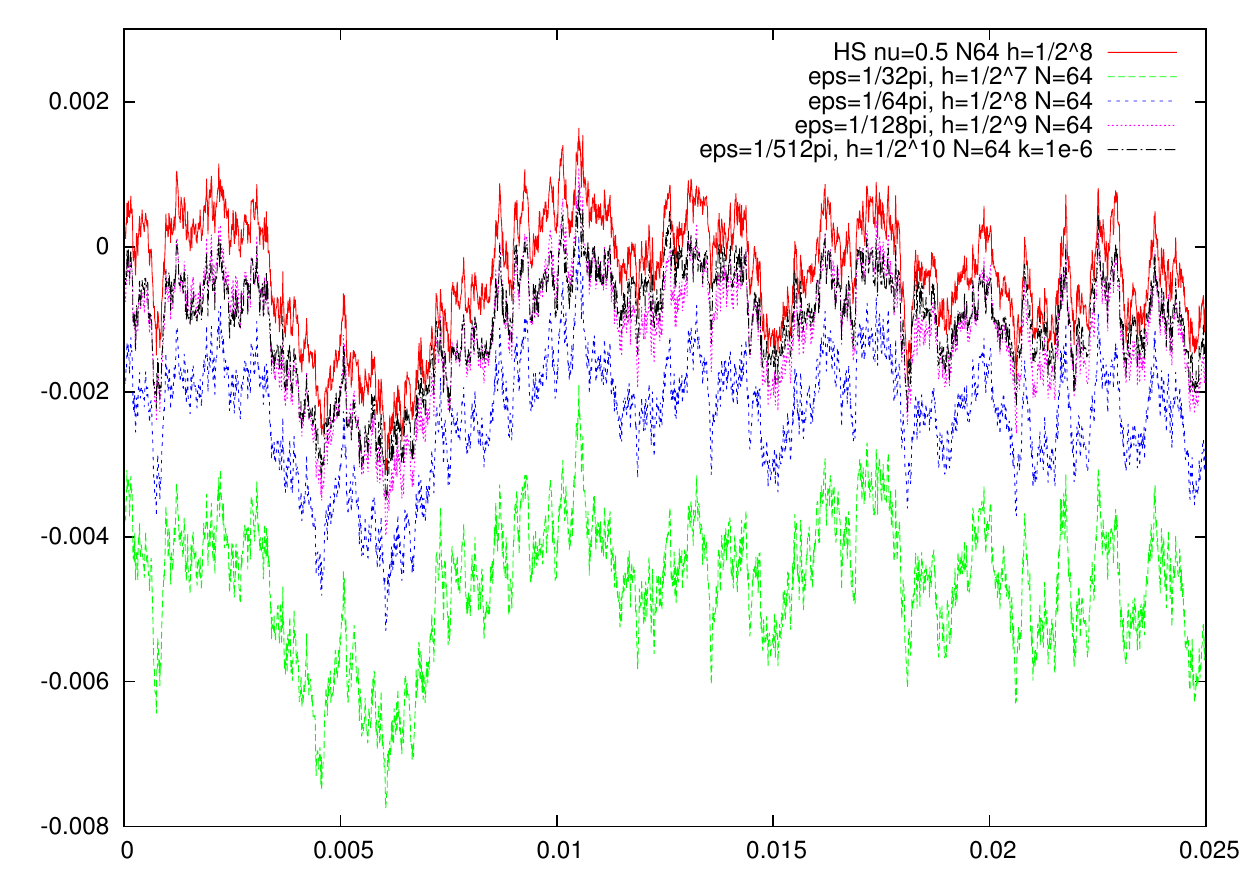}
\caption{Oscillations of the interface along the $x$-axis $(x,0)$ on uniform meshes 
for the stochastic Cahn-Hilliard equation with $\eps = 1/(32\pi)$, $h = 2^{-7}$,
$\eps = 1/(64\pi)$, $h = 2^{-8}$, 
$\eps = 1/(128\pi)$, $h_{\mathrm{min}} = 2^{-9}$, $\eps = 1/(512\pi)$, $h_{\mathrm{min}} = 2^{-11}$
and for the stochastic Mullins-Sekerka problem with $h = 2^{-7}$ and $h = 2^{-8}$
with the noise (\ref{noise_n64}) (top left); detail of the evolution (top right);
evolution of the zero level-set of the solution (bottom middle).}
\label{fig_cmp_n64_uniform}
\end{figure}
We observe that the results in Figure~\ref{fig_cmp_n64_uniform} for the stochastic Mullins-Sekerka model are
more sensitive to the mesh size, i.e., the graph \revised{for the} mesh with $h = 2^{-7}$ 
differs significantly from the remaining results.
For the mesh with $h_{\mathrm{min}} = 2^{-8}$ the results for the stochastic Mullins-Sekerka model are in good agreement
with the results for \revised{the} stochastic Cahn-Hilliard model.
We note that for values smaller than $\eps = 1/(128\pi)$  
we do not observe significant improvements of the approximation of the stochastic Mullins-Sekerka problem.
This is likely caused by the discretization errors in the numerical approximation of the stochastic Mullins-Sekerka model
which, for small values of $\eps$, are greater than the approximation error w.r.t. $\eps$ in the stochastic Cahn-Hilliard equation.

{From the above numerical results we conjecture that for \revised{$\eps \downarrow 0$}  
the solution of the stochastic Cahn-Hilliard equation with a non-vanishing noise term ($\gamma=0$)
converges to the solution of a stochastic Mullins-Sekerka problem (\ref{eq:HS}).
Formally, the stochastic Mullins-Sekerka problem (\ref{eq:HS}) can be obtained as a sharp-interface limit of a generalized Cahn-Hilliard equation
where the noise is treated as a deterministic function $G_1(t) = g\, \dot{W}(t)$,
cf.~(2.3) in \cite{abk18} and~(1.12) in \cite{AKK}.}

To examine the robustness of previous results with respect to adaptive mesh refinement
we recompute the previous problems with the noise (\ref{noise_n64}) using the adaptive mesh refinement 
algorithm \revised{with} $h_{\mathrm{max}}=2^{-6}$
and $h_{\mathrm{min}}=\frac{\pi}{4}\eps$.
The stochastic Mullins-Sekerka model is computed with $h_{\mathrm{max}}=2^{-6}$ and
the mesh is refined along the interface $\Gamma$ with mesh size $h_{\mathrm{min}}=2^{-8}$.

We note that with adaptive mesh refinement the results differ from those computed using uniform meshes,
since the noise (\ref{noise_n64}) is mesh dependent. \revised{For} instance,
in the regions with coarse mesh the noise (\ref{noise_n64}) is not properly resolved.
The computed results with the adaptive mesh refinement
can be interpreted as replacing the additive noise
(\ref{noise_n64}) with a multiplicative type noise that has lower intensity when $u\approx\pm 1$.
The presented computations contain an additional ''geometric'' factor in the numerical error
that is due to the fact that the mesh is adapted according to the position of the interface, 
as well as due to the fact that the adaptive mesh refinement algorithm for the Mullins-Sekerka problem is different.
Nevertheless, the results are still in good agreement with the stochastic Mullins-Sekerka problem, see Figure~\ref{fig_cmp_n64_adapt}.
In particular we observe that the convergence for smaller values of $\eps$ is more obvious for the zero level-set of the solution
than in the case of uniform meshes.
In Figure~\ref{fig_cmp_n64_adapt} we also \revised{include} a graph ('ftilde' in pink) 
which was computed using a modification of scheme (\ref{scheme_lumped}) with $\bigl(f(X^j_h),\psi_h \bigr)$ replaced by $\bigl(\tilde{f}(X^j_h, X^{j-1}_h),\psi_h \bigr)$ where
$\tilde{f}(X^j_h, X^{j-1}_h) = \frac{1}{2}(|X^j_h|^2-1)(X^j_h + X^{j-1}_h)$; for equal time-step size the modified scheme provides worse approximation
of the Mullins-Sekerka problem due to numerical damping.

\begin{figure}[!htp]
\center
\includegraphics[width=0.48\textwidth]{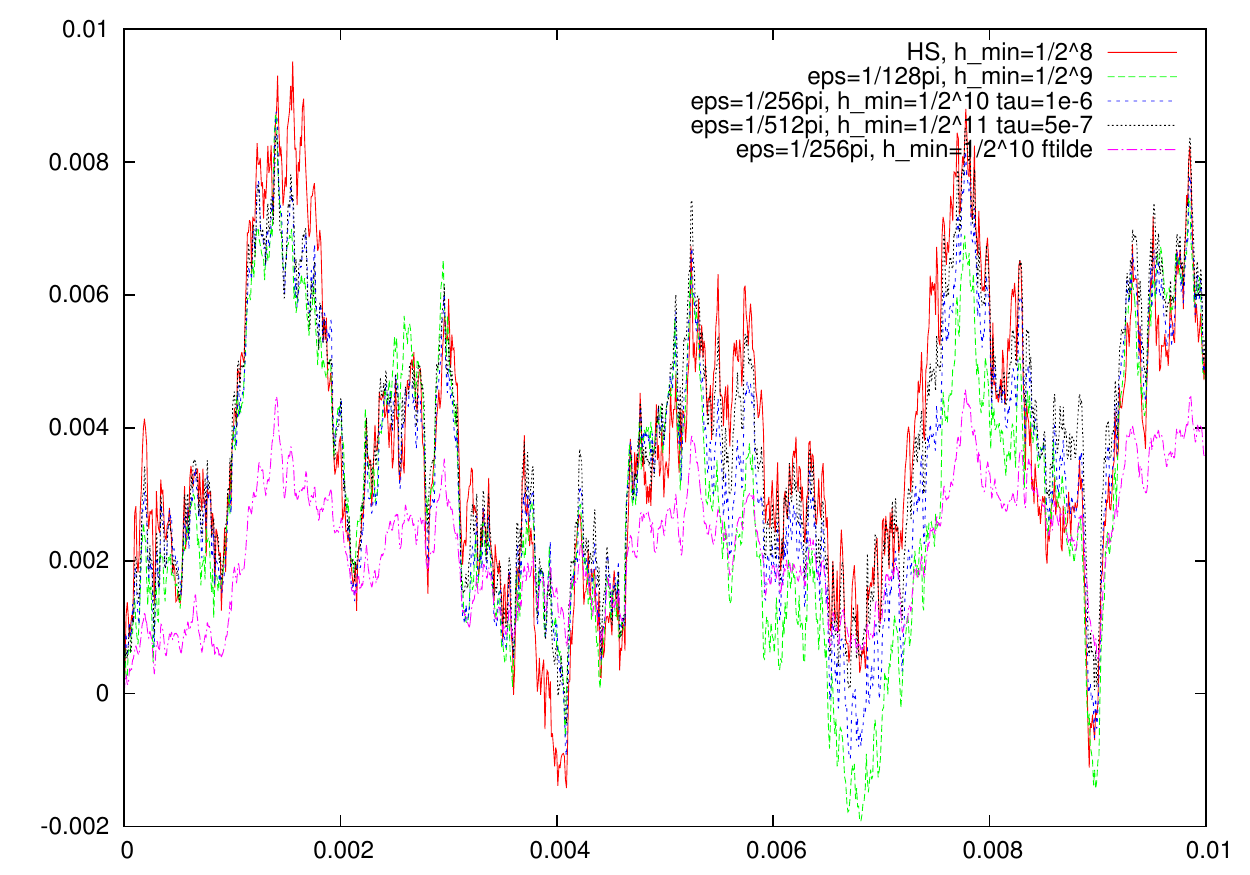}
\includegraphics[width=0.48\textwidth]{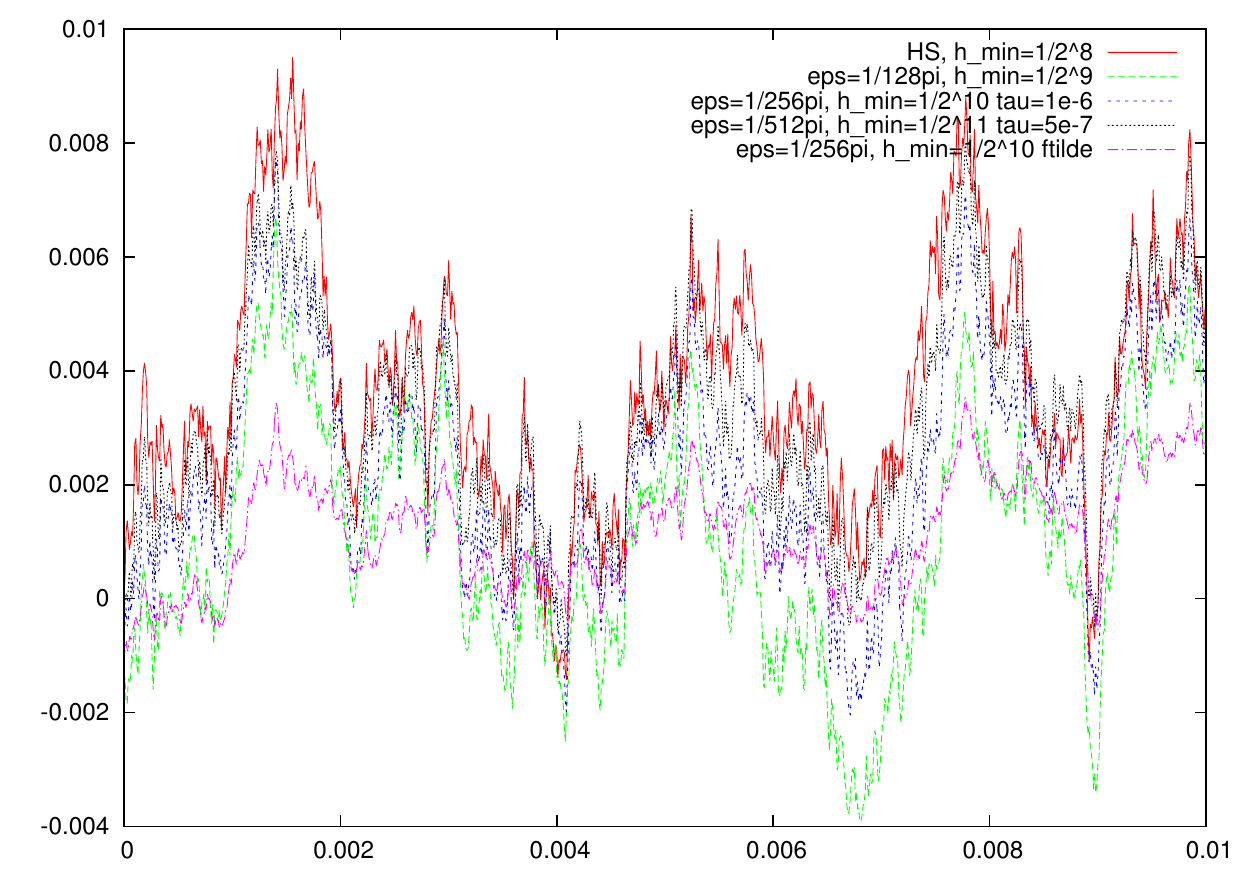}
\caption{Oscillations of the ''compensated'' level-set along the $x$-axis $(x,0)$ 
with adaptive mesh refinement with $h_{\mathrm{max}}=2^{-6}$
for stochastic Cahn-Hilliard equation with $\eps = 1/(32\pi)$, $h_{\mathrm{min}} = 2^{-7}$, $\eps = 1/(64\pi)$,
$h_{\mathrm{min}} = 2^{-8}$,  $\eps = 1/(128\pi)$, $h_{\mathrm{min}} = 2^{-9}$,
and the stochastic Mullins-Sekerka problem with $h_{\mathrm{min}} = 2^{-8}$, $h_{\mathrm{max}}=2^{-6}$
with the noise (\ref{noise_n64}) (left picture); evolution of the corresponding zero level-set (right picture).}
\label{fig_cmp_n64_adapt}
\end{figure}


\section*{Acknowledgement}
Financial support by the DFG through the CRC ''Taming uncertainty and profiting from
randomness and low regularity in analysis, stochastics and their applications'' is acknowledged.
The authors would like to thank the referees for helpful remarks and suggestions.



{\black{

\end{document}

}}


\begin{thebibliography}{99}


\bibitem{abc}
{\sc N.D.~Alikakos, P.W.~Bates, X.~Chen}, {\em Convergence of
the Cahn-Hilliard Equation to the Hele-Shaw Model}, Arch.~Rat.~Mech.~Anal.~128 (1994), pp.~165--205.

\bibitem{AlFu:93}
{\sc N.D.~Alikakos,  G.~Fusco}, {\em The spectrum of the
Cahn-Hilliard operator for generic interface in higher space
dimensions}, Indiana University Math.~J.~41 (1993), pp.~637--674.




\bibitem{abk18}
{\sc D.C.~Antonopoulou, D.~Bl\"omker, G.D.~Karali}, {\em The Sharp
interface limit for the Stochastic Cahn-Hilliard equation},
Annales de l'Institut Henri Poincar\'e Prob.-Stat.~504(1) (2018), pp.~280--298.

{\bibitem{AKK}
{\sc D.C.~Antonopoulou, G.D.~Karali, G.T.~Kossioris}, {\em Asymptotics for a generalized Cahn-Hilliard equation with forcing terms},
Discrete Contin.~Dyn.~Syst.~A, 30(4) (2011), pp.~1037--1054.}


\bibitem{book_bbnp14}
{\sc \v{L}.~Ba\v{n}as, Z.~Brze\'zniak, M.~Neklyudov, A.~Prohl},
\newblock {\em Stochastic ferromagnetism}, volume~58 of {\em De Gruyter Studies in Mathematics}.
\newblock De Gruyter, Berlin, 2014.



\bibitem{dendritic}
{\sc J.~W.~Barrett, H.~Garcke, and R.~N\"{u}rnberg},
\newblock {\em On stable parametric finite element methods for the {S}tefan problem
  and the {M}ullins-{S}ekerka problem with applications to dendritic growth},
\newblock {J. Comput. Phys.}, 229(18) (2010), pp.~6270--6299.

{\bibitem{brennerscott}
{\sc S.~C.~Brenner, L.~R.~Scott},
\newblock {\em The mathematical theory of finite element methods}, Springer,
New York (1994).}

\bibitem{Weber}
{\sc C.~Cardon-Weber}, {\em Cahn-Hilliard stochastic equation:
existence of the solution and of its density}, Bernoulli 7(5)
(2001), pp.~777--816.

\bibitem{Weber1}
{\sc C.~Cardon-Weber}, {\em Cahn-Hilliard stochastic equation:
strict positivity of the density}, Stoch.~Stoch.~Rep.~72(3-4)
(2002), pp.~191--227.

{\bibitem{C1}
{\sc C.~Carstensen}, {\em Merging the Bramble-Pasciak-Steinbach and the 
Crouzeix-Thom\'{e}e criterion for $H^1$-stability of the $L^2$-projection
onto finite element spaces}, Math.~Comp.~7 (2002), pp.~157--163.}


\bibitem{Ch:94}
{\sc X.~Chen}, {\em Spectrums for the Allen-Cahn, Cahn-Hilliard,
and phase-field equations for generic interface}, Comm.~Partial
Diff Eqns.~19 (1994), pp.~1371--1395.

\bibitem{cook}
{\sc H.~Cook}, {\em Brownian motion in spinodal decomposition},
Acta Metallurgica~18 (1970), pp.~297--306.



\bibitem{Debu2}
{\sc G.~Da Prato, A.~Debussche}, {\em Stochastic Cahn-Hilliard
Equation}, Nonlin.~Anal.~Th.~Meth.~Appl.~26(2) (1996),
pp.~241--263.




\bibitem{elezovic}
{\sc N.~Elezovic and A.~Mikelic}, {\em On the stochastic
Cahn-Hilliard equation}, Nonlinear Anal.~16(12) (1991), pp.~1169--1200.



\bibitem{ACFP}
{\sc X.~Feng, A.~Prohl}, {\em Numerical analysis of the Allen-Cahn equation and approximation
for mean curvature flows}, Numer.~Math.~3 (2003), pp.~35-65.

\bibitem{CHFP}
{\sc X.~Feng, A.~Prohl}, {\em Error analysis of a mixed finite
element method for the Cahn-Hilliard equation}, Numer.~Math.~99
(2004), pp.~47--84.

\bibitem{fp_ifb05}
{\sc X.~Feng, A.~Prohl}, {\em Numerical analysis of the {C}ahn-{H}illiard equation and
              approximation of the {H}ele-{S}haw problem}, Interfaces Free Bound.~7
(2005), pp.~1--28.

\bibitem{gam05}
{\sc M.~Gameiro, K.~Mischaikow, T.~Wanner}
{\em Evolution of pattern complexity in the Cahn-Hilliard theory of phase separation}
Acta Materialia~53 (2005), pp.~693-704.


\bibitem{phd_goudenege}
{\sc L. Gouden\`ege} 
{\em Quelques r\'esultats sur l'\'equation de Cahn-Hilliard stochastique et d\'eterministe”}. PhD thesis, 
\'Ecole Normale Sup\'erieure de Cachan, (2009).


\bibitem{Gurtin}
{\sc M.E.~Gurtin}, {\em Generalized Ginzburg-Landau and
Cahn-Hilliard equations based on a microforce balance}, Physica D
92 (1996), pp.~178--192.

\bibitem{hohenberg}
{\sc P.C.~Hohenberg and B.I.~Halperin}, {\em Theory of dynamic
critical phenomena}, Rev.~mod.~Phys.~49 (1977), pp.~435--479.



\bibitem{langer}
{\sc J.S.~Langer}, {\em Theory of spinodal decomposition in alloys},
Annals of Physics~65 (1971), pp.~53--86.



\bibitem{pm18}
{\sc A.K.~Majee, A.~Prohl},
{\em Optimal strong rates of convergence for a space-time discretization of the stochastic Allen-Cahn equation with multiplicative noise},
Comput.~Methods Appl.~Math.~18 (2018), pp.~297-311.

{
\bibitem{ps19}
{\sc A.~Prohl, C.~Schellnegger},
{\em Adaptive Concepts for Stochastic Partial Differential Equations},
J. Sci. Comput. (2019), available online.
}




\end{thebibliography}
\end{document}